\documentclass[10pt, twoside]{amsart}

\usepackage{t1enc}
\usepackage[latin1]{inputenc}
\usepackage{latexsym}
\usepackage{amssymb}
\usepackage{graphicx}
\usepackage{amsmath}
\usepackage{amsthm}
\usepackage{amsfonts}
\usepackage{mathpazo}
\usepackage{mathrsfs}
\usepackage[paper = letterpaper, left = 3.0cm, right = 3.0cm, headsep = 6mm, footskip = 10mm,
top = 28mm, bottom = 30mm, footnotesep=5mm, headheight = 2cm]{geometry}
\usepackage{fancyhdr}
\usepackage[all]{xy}
\usepackage[british]{babel}
\usepackage{soul}
\usepackage{hyperref}


\setlength{\parindent}{0pt}
\setlength{\parskip}{2mm}

\newcommand*{\embeds}{\hookrightarrow}
\newcommand*{\widebar}{\overline}
\newcommand*{\definiere}{\mathrel{\mathop:}=}
\newcommand*{\rdefiniere}{=\mathrel{\mathop:}}
\newcommand*{\tensor}{\otimes}

\newcommand{\cyclic}{\mathop{\kern0.9ex{{+}\kern-2.10ex\raise-0.20
      ex\hbox{\Large\hbox{$\circlearrowright$}}}}\limits}
\newcommand{\acts}{\mbox{ \raisebox{0.26ex}{\tiny{$\bullet$}} }}

\def\N{\ifmmode{\mathbb N}\else{$\mathbb N$}\fi}
\def\Z{\ifmmode{\mathbb Z}\else{$\mathbb Z$}\fi}
\def\Q{\ifmmode{\mathbb Q}\else{$\mathbb Q$}\fi}
\def\R{\ifmmode{\mathbb R}\else{$\mathbb R$}\fi}
\def\C{\ifmmode{\mathbb C}\else{$\mathbb C$}\fi}
\def\K{\ifmmode{\mathbb K}\else{$\mathbb K$}\fi}
\def\P{\ifmmode{\mathbb P}\else{$\mathbb P$}\fi}
\def\g{\ifmmode{\mathfrak g}\else {$\mathfrak g$}\fi}
\def\h{\ifmmode{\mathfrak h}\else {$\mathfrak h$}\fi}
\def\a{\ifmmode{\mathfrak a}\else {$\mathfrak a$}\fi}
\def\k{\ifmmode{\mathfrak k}\else {$\mathfrak k$}\fi}
\def\p{\ifmmode{\mathfrak p}\else {$\mathfrak p$}\fi}
\def\b{\ifmmode{\mathfrak b}\else {$\mathfrak b$}\fi}
\def\n{\ifmmode{\mathfrak n}\else {$\mathfrak n$}\fi}
\def\m{\ifmmode{\mathfrak m}\else {$\mathfrak m$}\fi}
\def\t{\ifmmode{\mathfrak t}\else {$\mathfrak t$}\fi}
\def\O{\ifmmode{\mathscr{O}}\else {$\mathscr{O}$}\fi}
\def\W{\ifmmode{\mathcal{V}}\else {$\mathscr{W}$}\fi}
\def\id{{\rm id}}

\def\hq{/\hspace{-0.14cm}/}
\def\kleinematrix#1,#2,#3,#4,{\begin{pmatrix}#1 & #2 \\ #3 & #4
  \end{pmatrix}}

\DeclareMathOperator{\Ad}{Ad}
\DeclareMathOperator{\Id}{Id}
\DeclareMathOperator{\Lie}{Lie}
\DeclareMathOperator{\trdeg}{trdeg}
\DeclareMathOperator{\pr}{pr}
\DeclareMathOperator{\End}{End}
\DeclareMathOperator{\dom}{dom}

\DeclareMathOperator{\supp}{supp}
\DeclareMathOperator{\Type}{Type}
\DeclareMathOperator{\Proj}{Proj}
\DeclareMathOperator{\Spec}{Spec}

\newtheoremstyle{daniel}{3.0mm}{0mm}{\itshape}{}{\bfseries}{.}{2mm}{}
\theoremstyle{daniel}
\newtheorem{thm}{Theorem}[section]
\newtheorem{prop}[thm]{Proposition}
\newtheorem{Defi}[thm]{Definition}
\newtheorem{lemma}[thm]{Lemma}

\newtheorem{Exs}[thm]{Examples}
\newtheorem{Ex}[thm]{Example}
\newtheorem{Rems}[thm]{Remarks}
\newtheorem{Rem}[thm]{Remark}
\newtheorem{thn}{Theorem}
\newtheorem*{thm*}{Theorem}

\newtheorem*{prop*}{Proposition}

\newenvironment{rem}   {\begin{Rem}\em}{\end{Rem}}
\newenvironment{rems}   {\begin{Rems}\em}{\end{Rems}}

\newenvironment{ex}  {\begin{Ex}\em}{\end{Ex}}

%
%
\def\cprime{$'$} \def\polhk#1{\setbox0=\hbox{#1}{\ooalign{\hidewidth
  \lower1.5ex\hbox{`}\hidewidth\crcr\unhbox0}}}
  \def\polhk#1{\setbox0=\hbox{#1}{\ooalign{\hidewidth
  \lower1.5ex\hbox{`}\hidewidth\crcr\unhbox0}}}
\providecommand{\bysame}{\leavevmode\hbox to3em{\hrulefill}\thinspace}
\providecommand{\MR}{\relax\ifhmode\unskip\space\fi MR }
\providecommand{\MRhref}[2]{%
  \href{http://www.ams.org/mathscinet-getitem?mr=#1}{#2}
}
\providecommand{\href}[2]{#2}
\begin{document}
\title{Projectivity of analytic Hilbert and Kaehler quotients}
\author{Daniel Greb}
\thanks{\emph{Mathematical Subject Classification:} Primary: 53D20; Secondary: 14L30, 14L24, 32M05, 53C55}
\thanks{\emph{Keywords:} quotients by reductive group actions, momentum maps, projective embeddings, Kaehler quotients, GIT}
\date{}
\begin{abstract}
We investigate algebraicity properties of quotients of complex spaces by complex re\-duc\-tive Lie groups $G$. We obtain a projectivity result for compact momentum map quotients of algebraic $G$-varieties. Furthermore, we prove equivariant versions of Kodaira's Embedding Theorem and Chow's Theorem relative to an analytic Hilbert quotient. Combining these results we derive an equivariant algebraisation theorem for complex spaces with projective quotient.
\end{abstract}
\maketitle

\section*{Introduction}
Symplectic reduction and holomorphic invariant theory have been very successfully applied to construct and study quotient spaces for actions of reductive groups on complex spaces. Given a complex reductive Lie group $G$, a holomorphic $G$-space $X$, and an equivariant momentum map $\mu: X \to \Lie(K)^*$ for the action of a maximal compact subgroup $K$ of $G$ with respect to a $K$-invariant K\"ahler structure on $X$,  the fundamental link between symplectic and complex geometry is given by the concept of semistability: generalising fundamental work of Kirwan \cite{KirwanCohomology}, it was shown by Heinzner and Loose \cite{ReductionOfHamiltonianSpaces} that the set of \emph{$\mu$-semistable
points}
\[X(\mu) \definiere \{ x \in X \mid \overline{G\acts x} \cap \mu^{-1}(0) \neq \emptyset\}\] admits an \emph{analytic Hilbert quotient}, i.e., a $G$-invariant holomorphic Stein map $\pi: X(\mu)\to
X(\mu)\hq G$ onto a complex space $X(\mu)\hq G$ with structure sheaf $\mathscr{O}^{hol}_{X(\mu)/\negthickspace / G}=(\pi_*\mathscr{O}_{X(\mu)}^{hol})^G$, see also \cite{GuilleminGeometricQuantisation} and \cite{SjamaarSlices}. If the complex space $X$ is projective algebraic and the
K\"ahler structure under consideration is the Fubini-Study form, the quotient $X(\mu)\hq G$ is projective algebraic and can also be constructed via Mumford's Geometric Invariant Theory, cf.\ \cite{MumfordGIT}.

Motivated by this observation, in the first part of this note we study \emph{algebraic Hamiltonian $G$-varieties}, complex algebraic $G$-varieties with $K$-invariant K\"ahler structures and $K$-equivariant momentum maps $\mu: X \to \Lie(K)^*$. We investigate the question whether the momentum map quotient $X(\mu)\hq G$ of an algebraic Hamiltonian $G$-variety $X$ inherits algebraicity properties from $X$. As the main result of this part, we prove
\begin{thn}[Projectivity Theorem]\label{1}
Let $G$ be a complex reductive Lie group and $K$ a maximal compact subgroup of $G$. Let $X$ be a $G$-irreducible algebraic Hamiltonian $G$-variety with momentum map $\mu: X
\rightarrow \Lie(K)^*$. Assume that $X$ has only $\mathit{1}$-rational singularities. Then, if $\mu^{-1}(0)$ is compact, the analytic Hilbert quotient $X(\mu)\hq G$ is a projective algebraic variety.
\end{thn}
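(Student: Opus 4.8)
The plan is to realise $Q \definiere X(\mu)\hq G$ as a compact K\"ahler space that is at the same time Moishezon, and then to invoke the projectivity criterion for such spaces. First I would extract the consequences of the compactness hypothesis. By the Kempf--Ness-type description of the analytic Hilbert quotient, the inclusion $\mu^{-1}(0)\embeds X(\mu)$ induces a homeomorphism $\mu^{-1}(0)/K \to Q$ and in particular $\pi(\mu^{-1}(0)) = Q$; hence compactness of $\mu^{-1}(0)$ yields compactness of $Q$. The $K$-invariant K\"ahler form on $X$ restricts to $\mu^{-1}(0)$ and descends to a reduced K\"ahler structure on $Q$, so $Q$ is a \emph{compact K\"ahler space}. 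Moreover, since $X$ has $1$-rational (in particular rational) singularities and $\pi$ is an analytic Hilbert quotient by the reductive group $G$, a Boutot-type argument shows that $Q$ inherits rational singularities; in particular $Q$ is normal, and I would arrange this step so that the additional cohomological control carried by the word ``$1$-'' (vanishing of the relevant $R^{1}$, hence of $H^{1}$ on $Q$) is explicitly available for the final descent of line bundles.

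Next I would show that $Q$ is Moishezon, i.e.\ that its algebraic dimension satisfies $a(Q) = \dim Q$. This is where the algebraicity of $X$ enters. By Rosenlicht's theorem the field $\C(X)^{G}$ of $G$-invariant rational functions has transcendence degree equal to $\dim X - \dim(\text{generic } G\text{-orbit})$, and, since $X(\mu)$ is a nonempty $G$-invariant (analytically) open subset of the $G$-irreducible variety $X$, this equals $\dim X(\mu)\hq G = \dim Q$. Each element of $\C(X)^{G}$ restricts to a $G$-invariant meromorphic function on $X(\mu)$ and therefore descends through $\pi$ to a meromorphic function on $Q$. Checking that these descended functions remain algebraically independent produces $\dim Q$ algebraically independent meromorphic functions on $Q$, so $a(Q)=\dim Q$ and $Q$ is Moishezon.

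Finally I would combine the two halves through the principle that a compact K\"ahler space with sufficiently mild singularities and maximal algebraic dimension is projective: the algebraic reduction furnishes a projective bimeromorphic model $\widehat{Q}\to Q$, and one must descend an ample class from $\widehat{Q}$ (or from a resolution) to $Q$ and match it with the reduced K\"ahler class. I expect this last bridge from transcendental to algebraic data to be the main obstacle, and it is precisely here that the $1$-rational singularity hypothesis is indispensable: it is needed both to guarantee that $Q$ itself (and not merely a resolution) has rational singularities, and to supply the vanishing of the obstruction group $H^{1}$ governing the descent of line bundles, so that the singular version of ``Moishezon $+$ K\"ahler $\Rightarrow$ projective'' becomes applicable. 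Verifying these vanishings on the singular space $Q$, rather than on a smooth model, is the delicate point of the argument.
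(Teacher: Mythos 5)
Your overall strategy coincides with the paper's: show that $Q \definiere X(\mu)\hq G$ is a compact, smoothly K\"ahler, Moishezon space with $1$-rational singularities, and conclude by Namikawa's projectivity criterion (Theorem \ref{Namikawa}); that criterion can simply be cited, so the ``main obstacle'' you locate in your final paragraph (descending ample classes from a model) is not where the real work lies. The real gaps are in your two intermediate steps. For the Moishezon step, both of your key claims --- that $\trdeg_\C \C(X)^G$ equals $\dim Q$, and that every $G$-invariant rational function descends through $\pi$ to a meromorphic function on $Q$ --- are false in general, and the paper's own example refutes them: for $\C^*$ acting on $\C^2$ by scalar multiplication one has $X(\mu)=\C^2$ and $Q=\{\mathrm{pt}\}$, yet $\trdeg_\C \C(\C^2)^{\C^*}=1$ and the invariant function $z/w$ is not a pullback from $Q$. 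The trouble is that $\mu^{-1}(0)$ may entirely miss the generic ($m$-dimensional) orbits, in which case $\dim Q < \dim X - m$ and descent fails; openness and density of $X(\mu)$ in $X$ does not prevent this, since fibres of $\pi$ are in general much larger than single orbits. What is missing is the reduction step (Lemma \ref{reductiontogeneric}): replace $X$ by a $G$-irreducible subvariety $Y \supset \mu^{-1}(0)$ with $Y(\mu)\hq G \cong Q$ and $Y_{\mathrm{reg}} \cap \mu|_Y^{-1}(0) \neq \emptyset$; only then do Rosenlicht's theorem, the dimension formula of Lemma \ref{dimensionofquotient}, and the descent of invariant meromorphic functions via their graphs (Proposition \ref{invariantsgodown}) apply.

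For the singularities step, you have the implication backwards: $1$-rational singularities are \emph{strictly weaker} than rational singularities (the paper's example is the vertex of the cone over a smooth quartic in $\P_3(\C)$), so the parenthetical ``in particular rational'' is wrong, and Boutot's theorem cannot be invoked as it stands; one needs the refinement that algebraic Hilbert quotients preserve $1$-rationality (Theorem \ref{rationalsingularities}). More seriously, any Boutot-type theorem applies to \emph{algebraic} Hilbert quotients of algebraic varieties, whereas $\pi: X(\mu)\to Q$ is only an \emph{analytic} Hilbert quotient of an a priori only analytically open subset of $X$. Bridging this gap is the main technical content of the paper's argument: an algebraic version of the slice theorem (Theorem \ref{algebraicslice}) shows that every point of $\mu^{-1}(0)$ has a $\pi$-saturated neighbourhood that is $G$-equivariantly biholomorphic to a saturated open subset of an affine $G$-variety, so that $Q$ is locally biholomorphic to open subsets of algebraic Hilbert quotients, to which Theorem \ref{rationalsingularities} then applies. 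Your ``Boutot-type argument'' conceals exactly this step; without it the assertion that $Q$ has $1$-rational singularities is unsupported. Two smaller omissions: the K\"ahler structure on $Q$ provided by reduction is only smooth along a stratification, and one needs Varouchas' theorem to upgrade it to a genuinely smooth K\"ahler structure before Namikawa's criterion applies; and the case of non-connected $G$ requires a separate reduction to the identity component $G_0$ followed by a quotient by the finite group $G/G_0$.
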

\vspace{1mm}
This generalises earlier results by Sjamaar \cite{SjamaarSlices} who proved projectivity of
$X(\mu)\hq G$ under the additional assumptions that $X$ is smooth and that the K\"ahler structure is the curvature form of a
positive $G$-linearised line bundle, and by Heinzner and
Migliorini \cite{MomentumProjectivity} who dealt with the case of arbitrary K\"ahler forms on
projective manifolds. We emphasise at this point that our result neither assumes the K\"ahler form to be integral nor requires the $G$-variety to be smooth or complete.


In contrast to the GIT-situation, there exist examples of Hamiltonian $G$-spaces such that the set of $\mu$-semistable points is not analytically Zariski-open, see Section~\ref{section:Chow}. Using Theorem \ref{1} and the Chow quotient of an algebraic $G$-variety, we show that this phenomenon cannot occur in our setup:
\begin{thn}\label{unstablepointsarealgebraic}
Let $G$ be a complex reductive Lie group with maximal compact subgroup $K$. Let $X$ be a
$G$-irreducible algebraic Hamiltonian $G$-variety with momentum map
$\mu: X \to \Lie (K)^*$. Assume that $X$ has $\mathit{1}$-rational singularities and that $\mu^{-1}(0)$ is compact.
Then, the set $X(\mu)$ of $\mu$-semistable points is algebraically Zariski-open in $X$.
\end{thn}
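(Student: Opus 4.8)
The role of the two standing hypotheses is only to make Theorem~\ref{1} applicable, so the plan is to apply it immediately and then read off algebraicity of $X(\mu)$ from the Chow quotient of $X$. By Theorem~\ref{1}, $Q\definiere X(\mu)\hq G$ is a projective algebraic variety and $\pi\colon X(\mu)\to Q$ is the analytic Hilbert quotient. Recall that each fibre $\pi^{-1}(q)$ contains a unique closed $G$-orbit $\mathcal O_q$, that $\mathcal O_q$ meets $\mu^{-1}(0)$, and that a point $x$ lies in $X(\mu)$ exactly when $\overline{G\acts x}$ contains such a $\mu$-semistable closed orbit. In this way the whole set $X(\mu)$ is governed by the family $\{\mathcal O_q\}_{q\in Q}$ of $\mu$-semistable closed orbits, and the task becomes to realise this family, and the condition of degenerating into it, algebraically.

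To do so I would pass to the Chow quotient. By equivariant completion, embed $X$ as a dense $G$-stable open subset of a projective $G$-variety $\widebar X$. Over a $G$-invariant Zariski-dense open subset the orbit closures taken in $\widebar X$ are cycles of a fixed dimension and bounded degree, and hence define a rational cycle map $\chi\colon X\dashrightarrow C$ into a suitable Chow variety, whose image closure is the Chow quotient $C$. This $C$ is projective; denote by $\Gamma\subseteq\widebar X\times C$ the universal family, with projections $g\colon\Gamma\to C$ and, crucially, a \emph{proper} projection $p\colon\Gamma\to\widebar X$ coming from completeness of $C$. The advantage over the analytically defined $X(\mu)$ is that $C$, $\Gamma$ and $\chi$ are genuinely algebraic.

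The heart of the matter is to compare the two quotients. The assignment $q\mapsto[\,\overline{\mathcal O_q}\,]$ sending $q\in Q$ to the cycle of its $\mu$-semistable closed orbit defines a map $\Psi\colon Q\to C$; it is holomorphic because the closed orbit depends continuously on $q$, and since $Q$ is projective the family $\{\overline{\mathcal O_q}\}$ sits inside the proper variety $\widebar X\times Q$, so GAGA upgrades it to an algebraic family of cycles and $\Psi$ to an algebraic morphism. Writing $S\definiere\Psi(Q)\subseteq C$ for the resulting closed, algebraic locus of $\mu$-semistable closed-orbit cycles, one then identifies the complement $X\setminus X(\mu)$ with the locus of points whose orbit closure degenerates \emph{entirely} away from $S$. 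Because $\mu^{-1}(0)$ is compact, this is a closed condition on the proper Chow quotient: intersecting $\Gamma$ with $\mu^{-1}(0)\times C$ and projecting by the proper map $g$ shows that the $\mu$-semistable cycles form a closed subset of $C$, and pulling back along the graph of $\chi$ and projecting by the proper map $p$ exhibits $X\setminus X(\mu)$ as an algebraically closed subset of $X$. Hence $X(\mu)$ is algebraically Zariski-open.

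The main obstacle is precisely this identification step. Two difficulties must be overcome. First, since $X$ need not be complete, GAGA and all the properness arguments have to be carried out on the completion $\widebar X$, and the behaviour of the cycles along the boundary $\widebar X\setminus X$ controlled; here the compactness of $\mu^{-1}(0)$ (equivalently, the compactness of $Q$ furnished by Theorem~\ref{1}) is what keeps the relevant families proper. Second, and more seriously, a limiting orbit-closure cycle may break into several components, some meeting $\mu^{-1}(0)$ and some not, so that membership of $x$ in $X(\mu)$ cannot be read off from the full limit cycle but only from the component carrying the degeneration of $\overline{G\acts x}$ itself. Isolating this component --- equivalently, showing that the stability of $x$ is detected consistently across its fibre in $\Gamma$ --- is the crux, and it is resolved using the separation of closed orbits by the analytic Hilbert quotient $\pi$ together with the Hilbert--Mumford description of $\mu$-semistability.
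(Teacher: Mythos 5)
Your proposal correctly identifies the Chow quotient as the right tool and honestly flags the crux, but the argument as outlined fails at three concrete points. First, the map $\Psi\colon Q\to C$, $q\mapsto[\,\overline{\mathcal{O}_q}\,]$, is not well defined: the closed semistable orbits $\mathcal{O}_q$ jump in dimension (and degree) as $q$ varies, so their closures do not lie in any single Chow variety $\mathscr{C}_{m,d}(\widebar X)$. The Chow quotient of Theorem~\ref{Chow} only parametrises limits of \emph{generic} orbit closures; lower-dimensional closed orbits occur merely as components of supports of limit cycles, never as points of $C$. Consequently $S\definiere\Psi(Q)$ does not exist as described, and GAGA cannot rescue this, since there is no holomorphic family of cycles to algebraise in the first place. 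Second, the closedness step is circular: $\mu^{-1}(0)$ is a compact set defined transcendentally, not an algebraic subvariety, so intersecting $\Gamma$ with $\mu^{-1}(0)\times C$ and pushing forward by the proper maps $g$ and $p$ produces sets that are closed in the Euclidean topology only. Obtaining a \emph{Zariski}-closed set from the datum $\mu^{-1}(0)$ is precisely the content of the theorem; it cannot be extracted by projecting a non-algebraic set along proper morphisms. Third, the component-breaking problem you correctly isolate as the crux is deferred to a ``Hilbert--Mumford description of $\mu$-semistability'', which is not available in this generality (arbitrary, possibly non-integral K\"ahler structures on possibly non-complete singular varieties) and appears nowhere in the paper; as stated this is an appeal to a missing theorem, not a proof.

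The paper circumvents all three difficulties by never trying to algebraise the full family of semistable closed orbits. Instead, Rosenlicht's theorem together with the projectivity of $Q=X(\mu)\hq G$ (Proposition~\ref{prop1}) produces a single non-empty $G$-invariant \emph{Zariski-open} subset $A\subset X(\mu)$ consisting of orbits closed in $X(\mu)$; the analytic input enters only here, through the open embedding of the Rosenlicht quotient into the projective variety $Q$. The Chow quotient is then applied to the \emph{algebraic} set $A$ (Lemma~\ref{saturationconstructible}), inside a Sumihiro neighbourhood rather than a global projective completion (note that your global completion $\widebar X$ requires quasi-projectivity of $X$, which is not assumed), yielding that the saturation $\mathcal{S}_G(A)$ is constructible; combined with the openness of $\pi^{-1}(\pi(A\cap W))\cap W$ this gives a non-empty Zariski-open subset of $X$ contained in $X(\mu)$. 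Finally, Noetherian induction (Proposition~\ref{openproposition}) removes this piece and repeats on the complement, so that no single global identification of $X\setminus X(\mu)$ is ever needed. If you want to salvage your approach, the essential missing idea is exactly this substitution of the non-algebraic set $\mu^{-1}(0)$ by an algebraic set $A$ with the same saturation behaviour, plus the induction that propagates openness from a dense piece to all of $X(\mu)$.
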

\vspace{1mm}
Complex reductive Lie groups and their representations are classically known to have strong
algebraicity properties. As a consequence, actions of
these groups on complex spaces can often be studied by methods that are closely related to classical invariant theory. For example, it was shown by Snow \cite{Snow} that each fibre of an analytic Hilbert quotient carries a natural affine algebraic structure given by the algebra of $K$-finite holomorphic functions. With respect to this algebraic structure the action of $G$ is also algebraic. In the second part of this paper
we use representation theory relative to an analytic Hilbert quotient, separation properties of $K$-finite holomorphic functions and classical
vanishing theorems to prove the following generalisation of Snow's result.
\begin{thn}[Algebraicity Theorem]\label{2}
Let $G$ be a complex reductive Lie group. Let $X$ be a holomorphic $G$-space such that the analytic
Hilbert quotient $\pi: X \rightarrow X\hq
G$ exists and such that $X\hq G$ is projective algebraic. Then, up to $G$-equivariant algebraic isomorphisms, there exists a uniquely determined
quasi-projective algebraic $G$-variety $Z$ with algebraic Hilbert quotient $\pi_Z: Z \rightarrow
Z\hq G$ such that $X$ is $G$-equivariantly biholomorphic to $Z$.
\end{thn}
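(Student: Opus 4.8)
The plan is to realise $X$ as the relative analytic spectrum over $Q\definiere X\hq G$ of the sheaf of algebras $\mathcal{A}\definiere\pi_*\O_X^{hol}$ and to transport this description into the algebraic category by a GAGA argument on the projective base $Q$. Since the analytic Hilbert quotient map $\pi$ is Stein, $X$ is recovered as $\Spec_Q^{an}\mathcal{A}$, and the $G$-action turns $\mathcal{A}$ into a sheaf of $G$-algebras whose sheaf $\mathcal{A}^{fin}$ of $K$-finite sections is a dense $\O_Q$-subalgebra. By Snow's theorem the fibres of $\pi$ are affine algebraic with coordinate ring spanned by the $K$-finite functions, so $\mathcal{A}^{fin}$ already determines $X$; I would work throughout with its isotypic decomposition $\mathcal{A}^{fin}=\bigoplus_\lambda\mathcal{A}_\lambda$ over the irreducible representations $\lambda$ of $G$, noting that the trivial component $\mathcal{A}_0$ equals $\O_Q$.

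The first key step is to show that each isotypic component $\mathcal{A}_\lambda$ is a \emph{coherent} $\O_Q$-module. Locally over $Q$ this reduces, via the holomorphic slice theorem of Heinzner and Loose, to the classical fact that the module of covariants of type $\lambda$ in the coordinate ring of an affine $G$-variety is finitely generated over the ring of invariants. Granting coherence, Serre's GAGA on the projective variety $Q$ supplies for each $\lambda$ a unique algebraic coherent sheaf $\mathcal{A}_\lambda^{alg}$ with $(\mathcal{A}_\lambda^{alg})^{an}\cong\mathcal{A}_\lambda$, and --- this is the real force of GAGA --- every multiplication map $\mathcal{A}_\lambda\tensor\mathcal{A}_\mu\to\bigoplus_\nu\mathcal{A}_\nu$, being a morphism of coherent sheaves on $Q$, is automatically algebraic. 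Hence $\mathcal{A}^{fin,alg}\definiere\bigoplus_\lambda\mathcal{A}_\lambda^{alg}$ is a quasi-coherent sheaf of $\O_Q^{alg}$-algebras carrying an algebraic $G$-action.

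I would then set $Z\definiere\Spec_Q\mathcal{A}^{fin,alg}$. By construction $Z$ is a reduced algebraic $G$-scheme, $((\pi_Z)_*\O_Z)^G=\mathcal{A}_0^{alg}=\O_Q^{alg}$, so $\pi_Z\colon Z\to Q$ is an algebraic Hilbert quotient, and comparison of relative spectra (using that the $K$-finite functions determine the fibres) yields a $G$-equivariant biholomorphism $Z^{an}\cong X$ over $Q$. To obtain the announced equivariant Kodaira embedding, hence the quasi-projectivity of $Z$, I would produce a single coherent $G$-subsheaf generating $\mathcal{A}^{fin,alg}$ as an algebra: locally the coordinate ring of a fibre is generated over its invariants by finitely many elements, lying in finitely many $\lambda$, and the compactness of $Q$ lets one choose a \emph{finite} set $F$ of weights so that $\mathcal{E}^\vee\definiere\bigoplus_{\lambda\in F}\mathcal{A}_\lambda^{alg}$ generates everywhere. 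This gives a $G$-equivariant closed embedding $Z\embeds\mathbb{V}(\mathcal{E})\definiere\Spec_Q\mathrm{Sym}_{\O_Q}\mathcal{E}^\vee$ into the total space of a $G$-vector bundle over the projective $Q$, which is quasi-projective; the equivariant Chow statement that the analytic image is algebraic is then exactly GAGA applied to the isotypic components of the (coherent) ideal sheaf of $X$ inside $\mathbb{V}(\mathcal{E})$.

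For uniqueness, let $Z'$ be a second quasi-projective $G$-variety with algebraic Hilbert quotient and a $G$-equivariant biholomorphism $Z'^{an}\cong X$. Uniqueness of the analytic Hilbert quotient forces $Z'\hq G\cong Q$, and running the canonical construction above on $Z'^{an}$ reproduces the sheaves $\mathcal{A}_\lambda$; a final application of GAGA upgrades the biholomorphism $Z'^{an}\cong X\cong Z^{an}$ to a $G$-equivariant algebraic isomorphism $Z'\cong Z$. I expect the main obstacle to be the two relative finiteness inputs --- coherence of the isotypic components $\mathcal{A}_\lambda$ and the uniform, compactness-based choice of finitely many generating weights --- because these are precisely the points at which the non-compactness of $X$ and the infinitude of the isotypic decomposition must be brought under control; once they are in place, the passage from the analytic to the algebraic category is a formal consequence of GAGA over the projective quotient $Q$.
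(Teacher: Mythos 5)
Your route is genuinely different from the paper's, and in outline it can be made to work. The paper argues geometrically, in two steps that are themselves highlighted as results of independent interest: first an equivariant Kodaira Embedding Theorem (Theorem~\ref{embedding}), which produces a \emph{proper} $G$-equivariant holomorphic embedding of $X$ into an algebraic $G$-vector bundle $V \tensor L^{\tensor m}$ over $X\hq G$ --- the tools being Serre vanishing to extend equivariant maps from fibres after twisting by an ample $L$ (Proposition~\ref{extendingmaps}), the finiteness of slice and orbit types, and the holomorphic version of Luna's fundamental lemma (Lemma~\ref{fundamental}) to obtain properness; and second an equivariant Chow Theorem (Theorem~\ref{invariantalgebraic}), stating that a $G$-invariant analytic subset of an algebraic $G$-variety with projective quotient which surjects onto the quotient is algebraic, proved by separating points with equivariant maps to bundles and invoking Neeman's GAGA-principle for quotient morphisms (Lemmata~\ref{Bundleseparation} and~\ref{equivariantalgebraic}). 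Existence of $Z$ then follows by applying the Chow theorem to the image of the embedding (Proposition~\ref{existsalgstructure}), and uniqueness by applying it to the graph of the biholomorphism inside $X_1 \times X_2$ (Proposition~\ref{twoquotients}). You instead build the algebraic model canonically: isotypic decomposition of the $K$-finite pushforward algebra, componentwise coherence (which you can get directly from the Roberts/Hausen--Heinzner coherence theorem \cite{Roberts}, \cite{HeinznerCoherent}, applied to $\mathscr{H}_X \tensor_\C V_\lambda^*$, rather than re-proving it via slices), componentwise GAGA (note the multiplication maps do have coherent targets, since $V_\lambda \tensor V_\mu$ contains only finitely many irreducibles), and relative $\Spec$. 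Your approach buys a choice-free construction in which uniqueness and functoriality are nearly formal; the paper's approach never has to control the infinite direct sum $\bigoplus_\lambda \mathcal{A}_\lambda$ globally, and yields the embedding and Chow theorems as separate tools. One small discrepancy: your $\mathbb{V}(\mathcal{E})$ is only a linear fibre space (a cone over $Q$), not a vector bundle, since $\mathcal{A}_\lambda^{alg}$ need not be locally free; quasi-projectivity still holds, but proving it (present $\mathcal{E}^\vee$ as a quotient of a sum of negative powers of $\mathscr{L}$) essentially re-derives the paper's twisting trick.

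Two of your steps compress genuine theorems into asides, and these are exactly the places where the paper has to work hardest. First, the biholomorphism $X \cong Z^{an}$ is not a formal ``comparison of relative spectra'': Snow's theorem \cite{Snow} is a fibrewise statement and cannot by itself control the complex structure of $X$ transverse to the fibres, nor rule out that $\Spec$ of the $K$-finite algebra is strictly larger than $X$ locally over the quotient. What is needed is the holomorphic Slice Theorem to present $\pi^{-1}(U)$, for small $U \subset X\hq G$, as a $G$-invariant closed analytic subset $A$ of a saturated open subset $\Omega$ of a $G$-module, combined with the equivariant analogue of Cartan's Theorem~A on Stein $G$-spaces (coherent $G$-sheaves are generated by their $K$-finite sections, cf.\ \cite{HeinznerGIT}, \cite{HeinznerCoherent}) applied to the ideal sheaf of $A$ in $\Omega$: this is what guarantees that the $K$-finite part of the pushforward cuts out exactly $X$. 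This step plays the role that Luna's fundamental lemma plays in the paper. Second, your uniqueness argument tacitly assumes that for an algebraic Hilbert quotient $\pi'\colon Z' \to Q$ the algebraic isotypic pushforwards analytify to the analytic ones; since $\pi'_*\mathscr{O}_{Z'}$ is only quasi-coherent, this is not an instance of projective GAGA but is precisely Neeman's comparison theorem \cite{NeemanAnalytic}, the same input the paper uses in Lemma~\ref{equivariantalgebraic}. With these two ingredients made explicit, your plan is a valid alternative proof.
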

Here, an \emph{algebraic Hilbert quotient} of a complex algebraic $G$-variety $Z$ is a $G$-invariant, affine morphism $\pi: Z \to Z\hq G$ onto an algebraic variety $Z\hq G$ with $\mathscr{O}_{Z/\negthickspace/G} = (\pi_* \mathscr{O}_Z)^G$. We derive Theo\-rem~\ref{2} from analogues of Kodaira's Embedding Theorem and Chow's Theorem in our setup (Theo\-rem~\ref{embedding} and Theorem~\ref{algebraicitytheorem}).

In summary, the results proven in this note establish a close connection between algebraicity properties of complex $G$-spaces and those of associated analytic Hilbert quotients. Furthermore, they underline the relevance of classical invariant theoretic methods in the study of group actions of complex reductive groups on complex spaces.


\subsection*{Outline of the paper}
In Sections 1 and 2 we recall definitions related to and basic properties of actions of complex reductive Lie groups. The proof of Theorem~\ref{1} uses a result of Namikawa \cite{ProjectivityofMoishezon} on the projectivity of K\"ahler Moishezon spaces with $1$-rational singularities (Theorem~\ref{Namikawa}). In line with the assumptions made in Namikawa's theorem, the content of the subsequent sections is as follows: in Section \ref{section:functionfields}, we prove that the quotient $X(\mu)\hq G$ is Moishezon, in Section~\ref{section:singularitiesofmomentumquotients} that it has only $1$-rational singularities, using a refinement of Boutot's result~\cite{Boutot} on the rationality of quotient singularities. The proof of Theorem~\ref{1} is then given in Section~\ref{section:proofoftheorem1}. After a short discussion of basic properties of Chow quotients, Section~\ref{section:Chow} is devoted to the proof of Theorem~\ref{unstablepointsarealgebraic}. In Section~\ref{section:shifting}, Theorem~\ref{1} and Theorem~\ref{unstablepointsarealgebraic} are generalised to non-zero levels of the momentum map. From Section~\ref{sectionembedding} on we work in the setup of Theorem~\ref{2} and prove the analogues of Kodaira's Embedding Theorem and of Chow's Theorem. As a consequence of these results, we obtain Theorem~\ref{2} in Section~\ref{sectionalgebraicityofspaces}. In the final section, we discuss the question whether momentum map quotients of algebraic Hamiltonian $G$-varieties can be obtained as GIT-quotients, indicating open questions and directions for further study.
\subsection*{Acknowledgements}
The results presented in this note are contained in the author's dissertation \cite{Doktorarbeit}. He wants to thank his supervisor Prof.\ Dr.\ Peter Heinzner for inspiring discussions and mathematical advice. Furthermore, the author wants to thank the referee for helpful comments and remarks.
The author was supported by the Studienstiftung des deutschen Volkes and by SFB/TR 12 ''Symmetries and Universality of Mesoscopic Systems'' of the DFG.
\section{Quotients with respect to actions of complex reductive groups}
\subsection{Complex-reductive Lie groups and their representations}
Let $G$ be a complex reductive Lie group and let $K$ be a maximal compact subgroup of $G$. Then, the inclusion $\imath : K \hookrightarrow G$ is the universal complexification of $K$ in the sense of \cite{HochschildStructure}. In particular, every continuous group homomorphism from $K$ into a complex Lie group extends to $G$. Conversely, every compact Lie group has a universal complexification $\imath : K \hookrightarrow K^\C$, $K^\C$ is a complex reductive Lie group, and $\imath (K)$ is a maximal compact subgroup of $K^\C$. Every
complex reductive Lie group carries the unique structure of a linear-algebraic group.

A \emph{representation} of a Lie group $G$ will always mean a finite-dimensional complex continuous
re\-pre\-sen\-tation of $G$. If $\rho: G \to GL_\C(V)$ is a representation of $G$, then $V$ together with
the action of $G$ on $V$ induced by $\rho$ is called a \emph{$G$-module}. Given a linear algebraic
group $G$, a representation $\rho: G \to GL_\C(V)$ is called \emph{rational} if $\rho$ is a regular
map between the linear algebraic groups $G$ and $GL_\C(V)$. Any representation of a compact Lie
group $K$ extends to a rational representation of $K^\C$.
\subsection{Complex spaces and algebraic varieties}
In the following a \emph{complex space} refers to a reduced complex space with countable topology.
For a given complex space $X$ the structure sheaf will be denoted by $\mathscr{H}_X$. Analytic
subsets are assumed to be closed. By an \emph{algebraic variety} we mean an algebraic variety
defined over the field $\C$ of complex numbers. The structure sheaf of an algebraic variety
$X$ will be denoted by $\mathscr{O}_X$. Given an algebraic variety $X$, there exists a complex space canonically associated to $X$, see \cite{GAGA}. If it is necessary to distinguish
between $X$ and its associated complex space, we will denote
the latter by $X^h$. Any regular map $\phi: X \to Y$ of algebraic varieties is holomorphic with respect to the holomorphic structures on $X$ and $Y$. Again, if it is necessary to
distinguish between algebraic and holomorphic structures, we will write $\phi^h: X^h \to Y^h$ for
the induced map of complex spaces.
\subsection{Actions of Lie groups and analytic Hilbert quotients}
If $G$ is a real Lie group, then a \emph{complex $G$-space $Z$} is a
complex space with a real-analytic action $\alpha: G \times Z \to Z$ such that all the maps
$\alpha_g: Z \to Z$, $z \mapsto \alpha(g,z) = g \acts z$ are holomorphic. If $G$ is a complex Lie group, a
\emph{holomorphic $G$-space
$Z$} is a complex $G$-space such that the action map $\alpha: G \times Z \to Z$ is
holomorphic. A \emph{complex $G$-manifold} is a complex $G$-space without
singular points. In general, the set of singular points of a complex $G$-space
$X$ is a $G$-invariant analytic subset of $X$.

Let $G$ be a complex reductive Lie group and $X$ a holomorphic $G$-space. A
complex space $Y$ together with a $G$-invariant surjective holomorphic map $\pi: X \to Y$ is called
an \emph{analytic Hilbert quotient} of $X$ by the action of $G$ if
\vspace{-2.5mm}
\begin{enumerate}
 \item $\pi$ is a locally Stein map, and
 \item $(\pi_*\mathscr{H}_X)^G = \mathscr{H}_Y$ holds.
\end{enumerate}
\vspace{-1mm}
Here, \emph{locally Stein} means that there exists an open covering of $Y$ by open Stein subspaces
$U_\alpha$ such that $\pi^{-1}(U_\alpha)$ is a Stein subspace of $X$ for all $\alpha$; by
$(\pi_*\mathscr{H}_X)^G$ we denote the sheaf $U \mapsto \mathscr{H}_X(\pi^{-1}(U))^G = \{f \in
\mathscr{H}_X(\pi^{-1}(U)) \mid f \;\; G\text{-invariant}\}$, $U$ open in
$Y$.

An analytic Hilbert quotient of a holomorphic $G$-space $X$ is unique up to biholomorphism once it exists and we will denote it by $X\hq G$. It has the following properties (see \cite{SemistableQuotients}):
\begin{enumerate}
\item Given a $G$-invariant holomorphic map $\phi: X \to Z$ into a
complex
space $Z$, there exists a unique holomorphic map $\widebar \phi: Y \to Z$ such that
$\phi = \widebar \phi \circ \pi$.

\item For every Stein subspace $A$ of $X\hq G$ the inverse image $\pi^{-1}(A)$ is a Stein subspace
of $X$.
\item If $A_1$ and $A_2$ are $G$-invariant analytic subsets of $X$, then
$\pi(A_1) \cap \pi(A_2) = \pi(A_1 \cap A_2)$.
\item For a $G$-invariant analytic subset $A$ of $X$, the image $\pi(A)$ in
$Y = X\hq G$ is an analytic subset of $X\hq G$ and the restriction $\pi|_A: A \to \pi(A)$ is the
analytic Hilbert quotient for the action of $G$ on $A$.
\end{enumerate}
It follows that two points $x,x' \in X$ have the same image in $X\hq G$ if and only if
$\overline{G\acts x} \cap \overline{G\acts x'} \neq \emptyset$. For each $q \in X\hq
G$, the fibre $\pi^{-1}(q)$ contains a unique closed $G$-orbit $G\acts x$. The stabiliser $G_x$ of $x$ in $G$ is a complex reductive Lie group.

If $X$ is a Stein space, then the analytic Hilbert quotient exists and has the properties listed above (see \cite{HeinznerGIT} and \cite{Snow}).
\subsection{Algebraic Hilbert quotients}
Let $G$ be a complex reductive group endowed with its natural linear-algebraic structure. An
\emph{algebraic $G$-variety} is an algebraic variety $X$ together with an action of $G$ on $X$
such that the action map $G \times X \to X$ is regular. Basic examples of algebraic $G$-varieties are re\-pre\-sen\-ta\-tion spaces $V$ of rational
$G$-re\-pre\-sen\-ta\-tions $\rho: G \to GL_\C(V)$ of $G$.

An algebraic variety
$Y$ together with a $G$-invariant surjective regular map $\pi: X \to Y$ is called an \emph{algebraic
Hilbert quotient} of $X$ by the action of $G$ if
\vspace{-2.5mm}
\begin{enumerate}
 \item $\pi$ is affine, and
  \item $(\pi_*\mathscr{O}_X)^G = \mathscr{O}_Y$ holds.
\end{enumerate}

For a $G$-module or, more general, an affine algebraic $G$-variety $V$, the algebra $\C[V]^G$ of $G$-invariant polynomials is finitely generated and the map $V \to \Spec \left(\C[V]^G \right)$ is an algebraic Hilbert quotient.

If $X$ is an algebraic $G$-variety, the associated complex space $X^h$ is in a natural way a holomorphic $G$-space. If the algebraic Hilbert quotient $\pi: X \to X\hq G$ exists, the associated holomorphic map $\pi^h: X^h \to (X\hq G)^h$ is an analytic Hilbert quotient for the action of $G$ on $X^h$, cf.\ \cite{Lunaalgebraicanalytic}.
\section{Momentum map quotients}
Analytic Hilbert quotients arise naturally in the study of Hamiltonian actions on K\"ahlerian complex spaces. We recall the necessary definitions.
\subsection{K\"ahler structures}
A \emph{K\"ahler structure} on a complex space $X$ is
given by an open cover $(U_j)$ of $X$ and a family of strictly
plurisubharmonic functions $\rho _j: U_j
\rightarrow \R$ such that the differences $\rho_j -
\rho_k$ are pluriharmonic on $U_j \cap U_k$. Here \emph{strictly plurisubharmonic} means strictly plurisubharmonic with respect to perturbations cf.\ \cite{Extensionofsymplectic}). A \emph{pluriharmonic function} is a function which is locally the real part of a holomorphic function. A K\"ahler structure $\omega = \{\rho_j\}$ is called \emph{smooth} if all the $\rho_j$'s can be chosen as smooth functions. For smooth $X$, a smooth K\"ahler structure is the same as a smooth K\"ahler form which is given
locally by $\omega = \frac{i}{2} \partial \bar\partial \rho_j$. In this case we will not distinguish between
$\{\rho_j\}$ as defined above and the associated K\"ahler form.
\subsection{Momentum maps and analytic Hilbert quotients}\label{intromomentummapquotients}
Let $K$ be a Lie group with Lie algebra $\mathfrak{k}$. Let $X$ be a complex $K$-space endowed with a smooth $K$-invariant K\"ahler structure $\omega = \{\rho_j\}_{j}$. A \emph{momentum map} with respect to $\omega$ is a smooth map $\mu: X \to \mathfrak{k}^*$ which is $K$-equivariant with respect to
the coadjoint representation of $K$ on $\mathfrak{k}^*$ such that for every $K$-stable complex
submanifold $Y$ of $X$ and for every $\xi \in \mathfrak{k}$, we have
\[d\mu^\xi = \iota_{\xi_X}\omega_Y.\]
Here,  $\iota_{\xi_X}$ denotes contraction
with the vector field $\xi_X$ on $X$ that is induced by the $K$-action, $\omega_Y$ denotes the K\"ahler form induced on $Y$, and the function $\mu^\xi: X
\to \R$ is given by $\mu^\xi(x)=\mu(x)(\xi)$. We call the action of $K$ on a complex $K$-space with
$K$-invariant K\"ahler structure $\omega$ \emph{Hamiltonian} if the $K$-action admits a
momentum map with respect to $\omega$.

Let $K$ denote a compact Lie group and $G$ its complexification. Let $X$ be a
holomorphic $G$-space with $K$-invariant K\"ahler structure $\omega$. We call $X$ a
\emph{Hamiltonian $G$-space}, if the $K$-action is Hamilto\-ni\-an with respect to $\omega$. Given a
Hamiltonian $G$-space with momentum map $\mu: X \to \mathfrak{k}^*$, set
\[X(\mu) \definiere \{ x \in X \mid \overline{G\acts x} \cap \mu^{-1}(0) \neq \emptyset\}. \]We
call $X(\mu)$ the set of \emph{semistable points} with respect to $\mu$ and the $G$-action.
In general, if $X$ is a holomorphic $G$-space and $A$ is a subset of $X$, then we set $\mathcal{S}_G(A) \definiere \{ x \in X \mid \overline{G \acts x} \cap A \neq \emptyset\}$. With this definition, we have $X(\mu) = \mathcal{S}_G(\mu^{-1}(0))$. We collect the main results about Hamiltonian $G$-spaces in
\begin{thm}[\cite{ReductionOfHamiltonianSpaces},
\cite{Extensionofsymplectic}]\label{propertiesmomentumquotients}
Let $X$ be a Hamiltonian $G$-space. Then,
\begin{enumerate}
\item $X(\mu)$ is open and $G$-invariant, and the analytic Hilbert quotient $\pi: X(\mu) \to X(\mu)\hq G$ exists,
\item the inclusion $\mu^{-1}(0) \hookrightarrow X(\mu)$ induces a homeomorphism $\mu^{-1}(0)/K \simeq X(\mu)\hq G$,
\item the complex space $X(\mu)\hq G$ carries a K\"ahler structure that is smooth along a natural stratification of $X(\mu)\hq G$.
\end{enumerate}
\end{thm}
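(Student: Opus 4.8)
The plan is to treat the three assertions in turn, building everything on a local analysis of the $G$-action near the zero fibre $\mu^{-1}(0)$, with the analytic Kempf--Ness theory as the common engine. Fixing $x \in X$, the relevant object is the Kempf--Ness function $\psi_x$ on the Riemannian symmetric space $G/K$, characterised up to an additive constant by the requirement that its derivative in the direction $i\xi$ recover $\mu^\xi$; the defining relation $d\mu^\xi = \iota_{\xi_X}\omega$ guarantees that the associated one-form is closed, so that $\psi_x$ exists. Writing $\mathfrak{g} = \mathfrak{k} \oplus i\mathfrak{k}$ and noting that the vector fields $(i\xi)_X = J\xi_X$ are gradient-like, I would show $\psi_x$ is strictly convex along the geodesics $t \mapsto \exp(it\xi)\acts x$, the convexity being a reformulation of the K\"ahler positivity of $\omega$. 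From this one extracts the basic dictionary: a $G$-orbit is closed in $X(\mu)$ precisely when it meets $\mu^{-1}(0)$, and in that case $\overline{G\acts x}\cap\mu^{-1}(0)$ is a single $K$-orbit on which $\psi_x$ attains its minimum.

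For assertion (1), $G$-invariance of $X(\mu)$ is immediate from $\overline{G\acts(g\acts x)} = \overline{G\acts x}$. Openness and the existence of the analytic Hilbert quotient I would obtain together from a holomorphic slice theorem: since any $x_0 \in \mu^{-1}(0)$ lies on a closed orbit, its stabiliser $G_{x_0}$ is complex reductive, and a $G$-invariant neighbourhood of $G\acts x_0$ is modelled on $G \times_{G_{x_0}} S$ for a $G_{x_0}$-invariant locally closed Stein slice $S$. On such a Stein model the analytic Hilbert quotient exists by the theory for Stein $G$-spaces recalled above, and semistability is an open condition there because it is so for the induced $G_{x_0}$-action on $S$. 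Covering $X(\mu)$ by the $G$-saturations of these models and gluing the local quotients, using uniqueness of analytic Hilbert quotients to make the identifications canonical, yields $\pi$. I expect the construction and gluing of the slice models, in particular producing a suitable $G$-invariant Stein neighbourhood of each closed orbit in $\mu^{-1}(0)$ where the exhaustion properties of $\mu$ enter, to be the main technical obstacle.

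Assertion (2) is then essentially formal. The map $\mu^{-1}(0) \hookrightarrow X(\mu) \xrightarrow{\pi} X(\mu)\hq G$ is $K$-invariant and hence descends to a continuous map $\mu^{-1}(0)/K \to X(\mu)\hq G$. It is surjective because every fibre of $\pi$ contains a unique closed orbit, which by the dictionary of the first paragraph meets $\mu^{-1}(0)$; it is injective because two points of $\mu^{-1}(0)$ with the same image have orbit closures that intersect, forcing their (closed) orbits to coincide and hence their intersections with $\mu^{-1}(0)$ to form one $K$-orbit. Properness of $\pi$ restricted to $\mu^{-1}(0)$ upgrades this continuous bijection to a homeomorphism.

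For assertion (3) I would descend the K\"ahler potentials. After averaging over $K$ I may assume each local potential $\rho_j$ is $K$-invariant, so that, through the identification $X(\mu)\hq G \cong \mu^{-1}(0)/K$ from (2), the restriction $\rho_j|_{\mu^{-1}(0)}$ descends to a function $\hat\rho_j$ on the quotient. The two points to verify are that the differences $\hat\rho_j - \hat\rho_k$ stay pluriharmonic and that each $\hat\rho_j$ is strictly plurisubharmonic for the complex structure of the quotient. The latter is the crux: the holomorphic directions on $X(\mu)\hq G$ correspond to the complex directions transverse to the complexified orbit $\mathfrak{g}\acts x$, and strict plurisubharmonicity of $\rho_j$ on $X$ forces positivity of its complex Hessian in exactly these directions. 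Restricted to each orbit-type stratum the construction recovers the Marsden--Weinstein/Sjamaar reduced K\"ahler form, so that the resulting structure is smooth along the natural stratification. The delicate point is the regularity of the $\hat\rho_j$ across the singular strata, which is precisely the content of the K\"ahlerian reduction results being invoked here.
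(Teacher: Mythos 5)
The paper does not prove Theorem~\ref{propertiesmomentumquotients} at all: it is quoted from \cite{ReductionOfHamiltonianSpaces} and \cite{Extensionofsymplectic} and used as a black box throughout (the paper's own Slice Theorem, Theorem~\ref{algebraicslice}, is an algebraic refinement that \emph{presupposes} this result). So there is no internal proof to compare yours against; the benchmark is those two references, and your outline does follow their actual route: convexity of the Kempf--Ness function along the geodesics $t \mapsto \exp(it\xi)\acts x$, holomorphic slices at points of $\mu^{-1}(0)$ (whose stabilisers are reductive), gluing of local Stein quotients, and descent of invariant potentials for the K\"ahler structure.

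As a standalone proof, however, the proposal has genuine gaps at exactly the points carrying the content. (a) In assertion (1), the missing step is the compatibility of local and global semistability: granting the slice model $G \times_{G_{x_0}} S$, you must still show that a suitable saturated neighbourhood $U$ of $G \acts x_0$ lies \emph{inside} $X(\mu)$, i.e.\ that every orbit closed in $U$ actually meets $\mu^{-1}(0)$; orbit closures taken in $U$ and in $X$ differ a priori, and closing this gap is where the convexity and exhaustion theory of \cite{Potentialsandconvexity} re-enters --- it does not follow from Stein theory on $S$. (The converse inclusion is the easy half: any $G$-invariant neighbourhood of a point of $\overline{G\acts x} \cap \mu^{-1}(0)$ automatically contains $x$.) (b) In assertion (2), properness of $\pi|_{\mu^{-1}(0)}$ is not ``essentially formal'': it is essentially equivalent to the homeomorphism statement itself and must again be verified in the local models, where it amounts to the Stein-case Kempf--Ness theorem; asserting it without proof places the entire content of (2) in an unproved claim. (c) In assertion (3), your last sentence concedes the two decisive points --- strict plurisubharmonicity of the descended functions with respect to the quotient complex structure, and their regularity across the strata --- to ``the K\"ahlerian reduction results being invoked'', which is circular, since those results \emph{are} assertion (3). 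Two smaller inaccuracies: strict convexity of the Kempf--Ness function fails in directions $\xi$ with $\xi_X(x)=0$, so the dictionary of your first paragraph needs the degenerate (stabiliser) directions handled separately; and a $K$-invariant local potential $\rho_j$ does not automatically have $\mu$ as its associated momentum map --- the two differ by a locally constant shift in $(\mathfrak{k}^*)^K$ --- so before restricting to $\mu^{-1}(0)$ one must first adapt the potentials to $\mu$, which is part of the work done in \cite{Extensionofsymplectic}.
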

An \emph{algebraic Hamiltonian $G$-variety} is an algebraic $G$-variety $X$ such that the associated holomorphic $G$-space $X^h$ is a Hamiltonian $G$-space. In particular, $X^h$ is assumed to carry a $K$-invariant K\"ahler structure.
\begin{ex}\label{projectivemomentumexample}
Let $K$ be a compact Lie group with Lie algebra $\mathfrak{k}$ and let $G = K^\C$ be its
complexification. Let $\rho: G \to GL_\C(V)$ be a representation of $G$
such that the action of $K$ leaves a Hermitian inner product $\langle \cdot, \cdot \rangle$ on $V$ invariant. Endow $\P(V)$ with the Fubini-Study form induced by $\langle \cdot,\cdot \rangle $. Let
$X$ be a $G$-stable subvariety of $\P(V)$. Then, $X^{h}$ is K\"ahler with $K$-invariant K\"ahler
structure $\omega$ given by the restriction of the Fubini-Study form to $X$. In
fact, $X^h$ is a Hamiltonian $G$-space and a momentum map with respect to $\omega$ is given by
\[\mu^\xi ([v]) = \frac{2i \langle \rho_*(\xi) v,v \rangle }{\langle v,v  \rangle } \quad \quad \forall\, \xi \in \mathfrak{k}, \forall \,[v]
\in X.
\]
Here, $\rho_*: \Lie(G) \to \End _\C(V)$ is the Lie algebra homomorphism induced by $\rho$.
The set $X^h(\mu)$ of semistable points coincides with $X(V) = \{[v] \in X \mid \exists \,\text{ non-constant }f \in \C[V]^G: f(v) \neq 0\}$ and is therefore Zariski-open in $X$.
It follows that the analytic Hilbert quotient $X^h(\mu) \hq G$ is the complex space associated to
the projective algebraic variety $X(V)\hq G \definiere \Proj \left( \C[\text{Cone}(X)]^G \right)$; cf.\ \cite{KirwanCohomology}.
\end{ex}
\section{Function fields of momentum map quotients}\label{section:functionfields}
\subsection{Meromorphic functions and meromorphic graphs} We denote the sheaf of germs of \emph{meromorphic functions} on a complex space $X$ by $\mathscr{M}_X$. If $X$ is irreducible, $\mathscr{M}_X(X)= H^0 (X, \mathscr{M}_X)$ is a field, called the \emph{function field} of $X$. Let $f \in \mathscr{M}_X(X)$ be a meromorphic function on $X$. The
\emph{sheaf of denominators} of $f$ is the sheaf $\mathscr{D}(f)$ with stalks
$\mathscr{D}(f)_x = \{v_x \in \mathscr{H}_x : v_x f_x \in \mathscr{H}_x \}$. We
define the \emph{polar set} of $f$ to be the zero set of this sheaf and denote it by $P_f$. It is a nowhere dense analytic subset of $X$. The polar set $P_f$ is the smallest subset of $X$
such that $f$ is holomorphic on $X \setminus P_f$. We set $\dom (f) \definiere X \setminus P_f$ and
call it the \emph{domain of definition} of $f$.

For $f \in \mathscr{M}_X(X)$ we define $\Gamma ^0_f  \definiere \left\{(x, [f(x): 1] ) \mid x \in \dom (f)\right\} \subset X \times
\mathbb{P}_1(\C)$. The point set closure of $\Gamma_f^0$ in $X \times \P _1$ is called
the \emph{graph} of $f$. We denote it by $\Gamma_f$.

An analytic set $\Gamma \subset X \times  \P_1$ together with the canonical projection $\sigma: \Gamma \rightarrow X$ is called a \emph{holomorphic graph at
$p \in X$} if there exists an open neighbourhood $U$ of $p$ such that
\vspace{-2.5mm}
\begin{enumerate}
 \item $\sigma|_{\sigma^{-1}(U)}: \sigma^{-1}(U) \rightarrow U$ is biholomorphic,
 \item $\sigma^{-1}(U) \cap (U \times  \{[1:0]\}) = \emptyset$.
\end{enumerate}
Clearly, the graph of a holomorphic function is a holomorphic graph. An analytic subset $\Gamma \subset X \times \P_1$ with the canonical projection $\sigma: \Gamma \rightarrow X$ is called a \emph{meromorphic graph over $X$} if there exists an analytic subset $A \subset X$ such
that $A$ and $\sigma^{-1}(A)$ are nowhere dense and $\Gamma$ is a holomorphic graph outside $A$.

\begin{prop}[\cite{FischerMeromorphic}]\label{graphtofunction}
The graph $\Gamma_f$ of a meromorphic function on a complex
space is a meromorphic graph. Conversely, let $\Gamma \embeds X \times \P_1$ be a meromorphic graph over $X$. Then, there exists a uniquely determined meromorphic function $f \in \mathscr{M}_X(X)$ such
that $\Gamma = \Gamma_f$.
\end{prop}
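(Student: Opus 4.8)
The plan is to treat the two implications separately, reducing both to the local structure of meromorphic functions and to an extension theorem of Remmert--Stein type. For the first assertion I would set $A \definiere P_f$, which is a nowhere dense analytic subset of $X$ with $\dom(f) = X \setminus A$. On $\dom(f)$ the function $f$ is holomorphic, so $\Gamma_f^0$ is the honest graph of the holomorphic map $\dom(f)\to\P_1$, $x\mapsto[f(x):1]$; in particular the canonical projection $\sigma$ restricts to a biholomorphism over $\dom(f)$ and its image avoids $\dom(f)\times\{[1:0]\}$. Thus $\Gamma_f$ is a holomorphic graph over every point of $\dom(f)$, and once I know that $\Gamma_f$ is analytic in $X\times\P_1$ and that $\sigma^{-1}(A)$ is nowhere dense, the first claim follows directly from the definition of a meromorphic graph.

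The analyticity of the closure $\Gamma_f$ is the main obstacle. I would argue locally: around a point $p\in X$ write $f = g/h$ with $g,h\in\mathscr{H}_X(U)$ and $h$ a non-zero-divisor, and consider the analytic set $Z \definiere\{(x,[s:t])\in U\times\P_1 \mid g(x)\,t = h(x)\,s\}$, the zero set of a holomorphic section of $\mathscr{O}_{\P_1}(1)$. Over $\{h\neq 0\}$ one has $Z=\Gamma_f^0$, and over the pole set $\{h=0\}\setminus\{g=0\}$ the fibre of $Z$ is the single point $[1:0]$, which is exactly the limit of $\Gamma_f^0$ there; hence over $U\setminus I$, where $I\definiere\{g=h=0\}$ is the indeterminacy locus, the set $Z$ is the graph of the holomorphic map $x\mapsto[g(x):h(x)]$ and coincides with the closure of $\Gamma_f^0$. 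Since this graph is purely $(\dim_p X)$-dimensional while $I\times\P_1$ has dimension $\dim I + 1$, the Remmert--Stein extension theorem shows that the closure is analytic in $U\times\P_1$ as soon as $\operatorname{codim}_X I\geq 2$; passing to the normalisation of $X$ (or choosing the representation $g/h$ suitably) one reduces to this case. Glueing the local pieces identifies $\Gamma_f\cap(U\times\P_1)$ with this closure, so $\Gamma_f$ is analytic, and the same description shows $\sigma^{-1}(A)$ is nowhere dense in $\Gamma_f$.

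For the converse, let $\Gamma\embeds X\times\P_1$ be a meromorphic graph with exceptional set $A$. The projection $\sigma:\Gamma\to X$ is proper (as $\P_1$ is compact) and restricts to a biholomorphism over the dense open set $X\setminus A$, so $\sigma(\Gamma)$ is an analytic set containing $X\setminus A$ and hence equals $X$: thus $\sigma$ is a proper modification. The affine coordinate $w=s/t$ on $\P_1$ defines a meromorphic function on $X\times\P_1$ with polar set $X\times\{[1:0]\}$; since $\Gamma$ is not contained in this polar set (because $\sigma^{-1}(A)$ is nowhere dense), the restriction $\tilde f\definiere w|_\Gamma$ is a well-defined element of $\mathscr{M}_\Gamma(\Gamma)$. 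As $\sigma$ is a proper modification, pullback induces an isomorphism $\sigma^*:\mathscr{M}_X(X)\to\mathscr{M}_\Gamma(\Gamma)$ (working on irreducible components if $X$ is not irreducible), and I define $f$ to be the unique meromorphic function with $\sigma^* f = \tilde f$. By construction $f$ agrees over $X\setminus A$ with the holomorphic function read off from $\sigma^{-1}$, so $\Gamma_f^0 = \Gamma\cap((X\setminus A)\times\P_1)$; taking closures and using that this set is dense in the analytic set $\Gamma$ yields $\Gamma=\Gamma_f$. Uniqueness is immediate: any two meromorphic functions with the same graph agree on the dense open set where both are holomorphic and their graphs are honest, hence coincide by the identity theorem for meromorphic functions.
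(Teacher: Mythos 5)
The paper offers no proof of this proposition at all --- it is quoted directly from Fischer \cite{FischerMeromorphic} --- so your argument must stand on its own. Its overall skeleton (local representations $f=g/h$, the incidence set $Z=\{g\,t=h\,s\}$, Remmert--Stein across the indeterminacy set, push-forward from a modification) is the right classical one, but there are two genuine gaps.

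In the first direction, the whole difficulty sits exactly in the step you dispatch in one parenthesis: achieving $\operatorname{codim}_X I\geq 2$. Neither of your two proposed reductions works. Normalising does not make $X$ smooth, and on a normal space it is in general \emph{impossible} to choose the representation $g/h$ so that $\{g=h=0\}$ has codimension $\geq 2$. Concretely, take $X=\{xy=z^2\}\subset\C^3$ (normal, hence equal to its own normalisation) and $f=x/z$. Writing $\mathscr{H}_{X,0}$ as the ring of even power series in $(s,t)$ via $x=s^2$, $y=t^2$, $z=st$, one has $f=s/t$, and any representation $f=g/h$ with $g,h\in\mathscr{H}_{X,0}$ forces $g=s\,g_1$, $h=t\,g_1$ for some odd series $g_1$ (a unique-factorisation argument in $\C\{s,t\}$); since $g_1(0)=0$, the set $\{g=h=0\}$ always contains the curve $\{g_1=0\}$, i.e.\ has codimension $1$. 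The graph closure is nonetheless analytic --- it is cut out by the \emph{two} equations $xv=zu$, $zv=yu$ in $X\times\P_1$, with fibre $\P_1$ only over the origin --- but no single pair $(g,h)$ exhibits this. Repairing the proof needs an additional idea: define the indeterminacy set $N_f$ as the locus where \emph{every} local representation satisfies $g(x)=h(x)=0$, note that your local argument (applied with a representation adapted to each point, inverting $f$ at points of $P_f\setminus N_f$) gives analyticity of $\overline{\Gamma_f^0}$ over $X\setminus N_f$, and prove $\operatorname{codim}N_f\geq 2$ on a normal space using that normal spaces are smooth in codimension one and that coprime representations exist at smooth (hence factorial) points; only then does Remmert--Stein apply. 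Alternatively, replace the normalisation by a resolution of singularities and push the graph down by Remmert's proper mapping theorem.

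In the converse direction the gap is logical rather than technical: you outsource everything to the bijectivity of $\sigma^*:\mathscr{M}_X(X)\to\mathscr{M}_\Gamma(\Gamma)$ for the proper modification $\sigma$. That theorem is true, but $\sigma$ has positive-dimensional fibres over $A$ (a full $\P_1$ over the origin in the example above), and for non-finite modifications the standard proof of the descent (surjectivity) half is: form the graph of $\tilde f$, push it forward by Remmert's proper mapping theorem, check the image is a meromorphic graph, and then apply the graph-to-function correspondence --- precisely the statement you are proving. As written, your converse is therefore circular. A non-circular route along your lines: push $\mathscr{O}_{\P_1}(1)|_\Gamma$ forward by Grauert's direct image theorem, so the homogeneous coordinates give sections $\tilde s,\tilde t$ of a coherent sheaf on $X$; the transporter ideal $(\mathscr{H}_X\tilde t:\tilde s)$ is coherent with nowhere dense zero set, hence contains a non-zero-divisor $h$ near any point by prime avoidance, and $h\tilde s=g\tilde t$ produces the required local representation $f=g/h$. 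The surrounding bookkeeping in your converse --- properness of $\sigma$, that no component of $\Gamma$ lies in $X\times\{[1:0]\}$, the closure argument giving $\Gamma=\Gamma_f$, and uniqueness via the identity theorem --- is fine.
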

If $X$ is a compact irreducible complex space, it is a classical result that the transcendence degree $\trdeg_\C \mathscr{M}_X(X)$ is less than or equal to the dimension of $X$, see e.g.\ \cite{CAS}. If $\trdeg_\C \mathscr{M}_X(X) = \dim X$, we call $X$ \emph{Moishezon}.

Let $X$ be a $G$-irreducible complex $G$-space. Consider the graph $\Gamma_f \subset X \times \P_1$ of a meromorphic function $f \in \mathscr{M}_X(X)$. The group $G$ acts on
$X \times \P_1$ by the $G$-action on the first factor. Given $g \in G$, we define a new
meromorphic graph $\Gamma_{g\acts f} \definiere g\acts \Gamma_f \subset X \times \P_1$.
This defines a meromorphic function $g\acts f$ on $X$ by Proposition \ref{graphtofunction}. In this way we obtain a group action on
$\mathscr{M}_X(X)$ by algebra automorphisms. A meromorphic function $f \in \mathscr{M}_X(X)$
is $G$-invariant if and only if its graph $\Gamma_f$ is a $G$-invariant subset of $X \times \P_1$. If $X$ is an algebraic $G$-variety, then $G$ acts naturally on the algebra $\C(X)$ of rational functions such that the inclusion $\C(X) \subset \mathscr{M}_{X^h}(X^h)$ is $G$-equivariant.
\subsection{Meromorphic functions on momentum map quotients}
Let $K$ be a compact Lie group and let $G=K^\C$ be its complexification. Let $X$ be a $G$-irreducible
algebraic Hamiltonian $G$-variety with momentum map $\mu: X \to \mathfrak{k}^*$. Let $Q =
X(\mu)\hq G$ denote the analytic Hilbert quotient of the set of semistable points with respect to
$\mu$. The $G$-irreducibility of $X$ implies that $X(\mu)$ is either empty or $G$-irreducible and dense in $X$ (see
\cite{Potentialsandconvexity}). It follows that $Q$ is irreducible.

We will use invariant rational functions on $X$ to produce meromorphic functions on $Q$. It should be noted that not every invariant rational function is obtained by pullback from the quotient, as the following example shows.
\begin{ex}
Consider the action of $G \definiere \C^* = (S^1)^\C$ on $X \definiere \C^2$ by scalar multiplication. The
action of $S ^1$ is Hamiltonian with respect to the standard K\"ahler form on $\C^2$ and, after identification of $\Lie (S^1)^*$ with $\R$, a momentum
map is given by $v \mapsto |v|^2$. We have $\mu^{-1}(0) = \{0\}$ and $X(\mu) = \C^2$. The analytic
Hilbert quotient $X(\mu)\hq G$ is a point. The rational function $f(z,w) = \frac{z}{w}$ on $X$ is $\C^*$-invariant. However, it is not
the pullback of a rational function from $X(\mu)\hq G = \{pt\}$ via the quotient map $\pi: \C^2 \to \{\mathrm{pt} \}$.
\end{ex}
If $X$ is a $G$-irreducible algebraic $G$-variety of an algebraic group $G$, let $m := \max _{x \in X} \{\dim G\acts x \}$. Then, $X_{\mathrm{reg}} \definiere \{x \in X \mid \dim G\acts x = m\}$ is a $G$-invariant Zariski-open subset of $X$.
\begin{lemma}\label{reductiontogeneric}
If $\mu^{-1}(0) \neq \emptyset$, there exists a $G$-irreducible subvariety $Y$ of $X$ such that $Y(\mu)\hq G \cong Q$ and $Y_{\mathrm{reg}} \cap \mu|_Y^{-1}(0) \neq \emptyset$.
\end{lemma}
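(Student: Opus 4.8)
The plan is to take $Y$ to be a suitable $G$-irreducible component of the subvariety on which the orbit dimension does not exceed that of the largest closed orbit meeting $\mu^{-1}(0)$, and then to use the uniqueness of the closed orbit in each quotient fibre to control the quotient $Y(\mu)\hq G$ and the generic orbit dimension of $Y$ at the same time. The point is that these two requirements pull in opposite directions, and the whole argument hinges on reconciling them.

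First I would set $d \definiere \max\{\dim G\acts x \mid x\in\mu^{-1}(0)\}$, which is attained since orbit dimensions are bounded by $\dim G$, and fix $x_0\in\mu^{-1}(0)$ with $\dim G\acts x_0 = d$. Writing $q_0\definiere\pi(x_0)$, Theorem \ref{propertiesmomentumquotients} shows that the orbit $G\acts x_0$ is closed in $X(\mu)$, so it is the unique closed orbit $C_{q_0}$ in the fibre $\pi^{-1}(q_0)$; in particular $\dim C_{q_0}=d$. Since the orbit dimension is lower semicontinuous in the Zariski topology, the set $X_{\le d}\definiere\{x\in X\mid\dim G\acts x\le d\}$ is a $G$-invariant Zariski-closed subvariety of $X$, and by the choice of $d$ it contains $\mu^{-1}(0)$.

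Next I would produce the quotient-preserving component. For any $G$-invariant closed subvariety $A\subseteq X$ the inclusion $\overline{G\acts y}\subseteq A$ gives $A(\mu)=A\cap X(\mu)$, and by the quotient property recalled in Section 1 the restriction $\pi|_{A\cap X(\mu)}\colon A\cap X(\mu)\to\pi(A\cap X(\mu))$ is again an analytic Hilbert quotient, with $\pi(A\cap X(\mu))$ analytic in $Q$. Applying this to $A=X_{\le d}$ and combining $\mu^{-1}(0)\subseteq X_{\le d}$ with the surjectivity of $\mu^{-1}(0)/K\to Q$ from Theorem \ref{propertiesmomentumquotients}, I obtain $\pi(X_{\le d}\cap X(\mu))=Q$. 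Decomposing $X_{\le d}$ into its finitely many $G$-irreducible components $Y_1,\dots,Y_r$, the sets $\pi(Y_i\cap X(\mu))$ are analytic subsets of $Q$ whose union is $Q$; as $Q$ is irreducible, some component $Y\definiere Y_{i_0}$ satisfies $\pi(Y\cap X(\mu))=Q$, and then the quotient property yields $Y(\mu)\hq G=\pi(Y\cap X(\mu))=Q$, which is the first assertion.

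Finally I would verify the regularity statement for this $Y$. Since $q_0\in Q=\pi(Y\cap X(\mu))$, the fibre over $q_0$ of the analytic Hilbert quotient $\pi|_{Y(\mu)}$ contains an orbit closed in $Y(\mu)$; being closed in $X(\mu)$ as well, it must coincide with $C_{q_0}$, so $C_{q_0}\subseteq Y$. Because $Y\subseteq X_{\le d}$ and $\dim C_{q_0}=d$, the generic orbit dimension on $Y$ equals $d$ and $C_{q_0}\subseteq Y_{\mathrm{reg}}$; hence any $y_0\in C_{q_0}\cap\mu^{-1}(0)$ lies in $Y_{\mathrm{reg}}\cap\mu|_Y^{-1}(0)$, which is therefore nonempty. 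I expect the main obstacle to be exactly this interplay: a priori the surjecting component of $X_{\le d}$ need not be the one through $x_0$, and the resolution is the observation that any component with $\pi(Y\cap X(\mu))=Q$ is forced, by uniqueness of the closed orbit in each fibre, to contain the full maximal-dimensional closed orbit $C_{q_0}$, which simultaneously pins down the quotient and the generic orbit dimension.
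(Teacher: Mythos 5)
Your proof is correct, but it takes a genuinely different route from the paper's. The paper argues by induction on $\dim X$: if $X_{\mathrm{reg}} \cap \mu^{-1}(0) = \emptyset$, it passes to the proper closed subvariety $X \setminus X_{\mathrm{reg}}$, which contains $\mu^{-1}(0)$, selects a $G$-irreducible component whose semistable set maps onto $Q$ (exactly your irreducibility-of-$Q$ argument), and recurses, with termination guaranteed by the strict drop in dimension. You avoid induction entirely: cutting down in one step to the Zariski-closed sublevel set $X_{\le d}$, where $d$ is the maximal orbit dimension on $\mu^{-1}(0)$, guarantees that the surjecting component $Y$ has generic orbit dimension at most $d$, and your key observation --- that the unique closed orbit in the fibre of $\pi|_{Y(\mu)}$ over $q_0$ is also closed in $X(\mu)$ (because $Y(\mu) = Y \cap X(\mu)$ is closed in $X(\mu)$) and hence coincides with $C_{q_0} = G\acts x_0$ --- forces $C_{q_0} \subseteq Y$, pinning the generic orbit dimension of $Y$ to exactly $d$ and placing $x_0$ in $Y_{\mathrm{reg}} \cap \mu|_Y^{-1}(0)$. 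The price is the auxiliary fact that orbits through points of $\mu^{-1}(0)$ are closed in $X(\mu)$; this is not literally the statement of Theorem \ref{propertiesmomentumquotients}, but it does follow from part (2) of that theorem combined with the definition of $X(\mu)$ and the unique-closed-orbit property of fibres (if $G\acts x_0$ were not closed, the closed orbit in its fibre would meet $\mu^{-1}(0)$ in a second $K$-orbit, contradicting injectivity of $\mu^{-1}(0)/K \to X(\mu)\hq G$), and the paper itself uses this fact without comment in the lemma and the Slice Theorem that follow. In exchange, your argument is a single pass rather than a recursion, needs the component-selection step only once, and yields slightly more information than the lemma asks for, namely that the generic orbit dimension of $Y$ equals the maximal orbit dimension along $\mu^{-1}(0)$.
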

\vspace{-3mm} \begin{proof}
We argue by induction on $\dim X$. If $\dim X = 0$, there is nothing to prove. In the general case, if $X_{\mathrm{reg}} \cap \mu^{-1}(0) \neq \emptyset$, we are done. So we can assume that $X_{\mathrm{reg}} \cap \mu^{-1}(0) = \emptyset$. Consider $Y \definiere X \setminus X_{\mathrm{reg}}$. We have $\mu^{-1}(0) \subset Y$ and therefore $\pi(Y(\mu)) = \pi(\mu^{-1}(0)) = Q$. Let $Y = \bigcup_{j=1}^m Y_j$ be the decomposition
of $Y$ into $G$-irreducible components. Then, we have $Y(\mu) = \bigcup_{j=1}^m Y(\mu) \cap Y_j =
\bigcup_{j=1}^m Y_j(\mu)$ and $Q = \bigcup_{j=1}^m \pi(Y_j(\mu))$. Since $Q$ is irreducible and each
$\pi(Y_j(\mu))$ is analytic in $Q$, there exists a $j_0 \in \{1,\dots, m\}$ such that $\pi(Y_{j_0}(\mu))
= Q$. It follows that $Y_{j_0}(\mu)\hq G \cong Q$. Since $\dim Y_{j_0} < \dim X$, we are done.
\end{proof}

\begin{lemma}
Assume that $X_{\mathrm{reg}} \cap \mu^{-1}(0) \neq \emptyset$. Then, the set $\pi(X_{\mathrm{reg}} \cap \mu^{-1}(0)) $ is an analytically Zariski-open dense subset of $Q$. We set $X^s \definiere
\pi^{-1}(\pi (X_{\mathrm{reg}} \cap \mu^{-1}(0))$. Then, $X^s$ is analytically
Zariski-open and dense in $X(\mu)$, we have $X^s = G\acts \mu^{-1}(0)$, and the restriction
$\pi|_{X^s}: X^s \rightarrow \pi(X^s)\subset Q$ is a geometric quotient for the $G$-action on
$X^s$, i.e., the fibres of $\pi|_{X^s}$ are orbits of the $G$-action.
\end{lemma}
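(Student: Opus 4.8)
The plan is to reduce everything to one claim about fibres of $\pi$: for every $x\in S\definiere X_{\mathrm{reg}}\cap\mu^{-1}(0)$ the orbit $G\acts x$ is closed in $X(\mu)$, has the maximal orbit dimension $m$, and coincides with the entire fibre $\pi^{-1}(\pi(x))$. All four assertions of the lemma will then follow by formal manipulations with the analytic Hilbert quotient.

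To prove closedness, fix $x\in S$ and put $q\definiere\pi(x)$. By Theorem~\ref{propertiesmomentumquotients}(2) the restriction of $\pi$ to $\mu^{-1}(0)$ induces the homeomorphism $\mu^{-1}(0)/K\cong Q$, so $\pi^{-1}(q)\cap\mu^{-1}(0)=K\acts x$. Let $C\subseteq\pi^{-1}(q)$ be the unique closed $G$-orbit. Every $c\in C$ is $\mu$-semistable with $\overline{G\acts c}=C$, so the definition of $X(\mu)$ forces $C\cap\mu^{-1}(0)\neq\emptyset$; as $C\cap\mu^{-1}(0)\subseteq\pi^{-1}(q)\cap\mu^{-1}(0)=K\acts x$ and $C$ is $G$-stable, we get $x\in C$ and hence $C=G\acts x$ is closed. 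Since $x\in X_{\mathrm{reg}}$, this orbit has maximal dimension $m$. For an arbitrary orbit $O\subseteq\pi^{-1}(q)$ the closure $\overline O$ is a closed $G$-invariant subset of the fibre and therefore contains a closed orbit, which by uniqueness is $C=G\acts x$; thus $G\acts x\subseteq\overline O$. If $O\neq G\acts x$, then $G\acts x$ would lie in the boundary $\overline O\setminus O$, giving $\dim(G\acts x)<\dim O\le m$, a contradiction. Hence $\pi^{-1}(q)=G\acts x$.

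It remains to read off the assertions. Set $B\definiere X\setminus X_{\mathrm{reg}}$, a proper $G$-invariant Zariski-closed subset; then $B\cap X(\mu)$ is a $G$-invariant analytic subset, so its image $\pi(B\cap X(\mu))$ is analytic in $Q$. The fibre claim yields $\pi(S)=Q\setminus\pi(B\cap X(\mu))$: a point $q$ lies in $\pi(S)$ exactly when $\pi^{-1}(q)\subseteq X_{\mathrm{reg}}$, i.e.\ when $\pi^{-1}(q)\cap B=\emptyset$ (for the nontrivial direction one uses that the closed orbit of any fibre meets $\mu^{-1}(0)$). Because $S\neq\emptyset$, the analytic set $\pi(B\cap X(\mu))$ is proper in the irreducible space $Q$ and hence nowhere dense, so $\pi(S)$ is analytically Zariski-open and dense. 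Consequently $X^s=\pi^{-1}(\pi(S))$ is open by continuity of $\pi$, while $X(\mu)\setminus X^s=\pi^{-1}(\pi(B\cap X(\mu)))$ is a proper $G$-invariant analytic subset of the $G$-irreducible space $X(\mu)$, hence nowhere dense; thus $X^s$ is dense. Since every fibre over $\pi(S)$ is a single orbit lying in $X_{\mathrm{reg}}$ and meeting $\mu^{-1}(0)$, we obtain $X^s=G\acts(X_{\mathrm{reg}}\cap\mu^{-1}(0))$, the union of the maximal-dimensional closed orbits through $\mu^{-1}(0)$, and the fibres of $\pi|_{X^s}$ are exactly these orbits, so $\pi|_{X^s}$ is a geometric quotient.

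The crux is the fibre claim, and inside it the closedness of $G\acts x$ for $x\in\mu^{-1}(0)$: this is the one point where the momentum-map geometry, entering through the homeomorphism of Theorem~\ref{propertiesmomentumquotients}(2), is indispensable and cannot be replaced by purely quotient-theoretic formalities. Once closedness together with the maximality of $\dim(G\acts x)$ on $X_{\mathrm{reg}}$ is available, the collapse of each fibre to a single orbit---and with it the openness, density and geometric-quotient statements---is comparatively routine.
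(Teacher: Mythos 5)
Your proof is correct and in substance follows the same route as the paper's: both arguments rest on (i) the fact that $\pi$ maps the $G$-invariant analytic set $(X\setminus X_{\mathrm{reg}})\cap X(\mu)$ onto an analytic subset of $Q$ whose complement is exactly $\pi(X_{\mathrm{reg}}\cap\mu^{-1}(0))$, and (ii) the algebraicity of the action, which makes orbits Zariski-open in their closures, so that any orbit whose closure contains the $m$-dimensional closed orbit through $X_{\mathrm{reg}}\cap\mu^{-1}(0)$ must coincide with it. You merely run these two steps in the opposite order (fibre structure first, topology second), and you make explicit, via the homeomorphism $\mu^{-1}(0)/K\simeq Q$ of Theorem~\ref{propertiesmomentumquotients}, the closedness of the orbit through a point of $X_{\mathrm{reg}}\cap\mu^{-1}(0)$ --- a fact the paper uses tacitly when it asserts that the closed orbit contained in $\overline{G\acts x}$ passes through $X_{\mathrm{reg}}\cap\mu^{-1}(0)$.

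One point deserves attention: what you prove is $X^s=G\acts\bigl(X_{\mathrm{reg}}\cap\mu^{-1}(0)\bigr)$, not the equality $X^s=G\acts\mu^{-1}(0)$ as literally stated in the lemma. This is not a defect of your argument; the literal statement fails whenever $\mu^{-1}(0)\not\subseteq X_{\mathrm{reg}}$. For instance, for $\C^*$ acting on $\C^2$ by $t\acts(x,y)=(tx,t^{-1}y)$ with the standard K\"ahler form, one has $X^s=\{xy\neq 0\}$, while $G\acts\mu^{-1}(0)$ also contains the origin; the stated identity is evidently a misprint for the one you prove, and the paper's own proof likewise only establishes the corrected version. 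Two harmless imprecisions you may wish to tighten: in your closedness argument the orbit closures should be taken in $X(\mu)$ rather than in $X$ (immaterial, since $\mu^{-1}(0)\subseteq X(\mu)$, so intersecting the closure in $X$ with $\mu^{-1}(0)$ amounts to the same thing), and the strict dimension drop on orbit boundaries that you invoke holds because the action is algebraic --- for merely holomorphic actions orbits need not be locally closed, so this hypothesis is doing real work, exactly as the paper notes at the corresponding step.
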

\vspace{-3mm} \begin{proof}
Let $Y \definiere X \setminus X_{\mathrm{reg}}$. The intersection of $Y$ with $X(\mu)$ is analytic in
$X(\mu)$ and $G$-invariant. Since $\pi (Y \cap \mu^{-1}(0)) = \pi (Y \cap X(\mu)) $, the latter
being the image of a $G$-invariant analytic set in $X(\mu)$, $\pi(X_{\mathrm{reg}} \cap
\mu^{-1}(0))$
is analytically Zariski-open and dense in $Q$. It follows that $X^s$ is analytically Zariski-open and dense in
$X(\mu)$. Let $x \in X^s$. Then, the closure of $G\acts x$ in $X(\mu)$ contains a unique closed
$G$-orbit, say $G\acts y \subset \overline{G\acts x}$, for some $y \in X_{\mathrm{reg}} \cap \mu^{-1}(0)$.
Furthermore, since the action of $G$ on $X$ is algebraic, $G\acts x$ is Zariski-open in
$\overline{G\acts x}$. Since $y \in X_{\mathrm{reg}}$, this implies that $G\acts x = G
\acts y$. As a consequence, $G \acts x$ is closed in $X(\mu)$.
\end{proof}

The map $\pi \times \id_{\P_1}: X(\mu) \times \P_1 \rightarrow Q \times \P_1$ is an analytic
Hilbert quotient for the action of $G$ on $X(\mu) \times \P_1$ that is induced by the action of $G$ on
the first factor. Let $f \in \mathscr{M}_X(X(\mu))^G$ be a $G$-invariant meromorphic function on $X$
and let $\Gamma_f \subset
X(\mu) \times \P_1$ be its graph. The image $\widehat \Gamma \definiere \pi \times \id_{\P_1}(\Gamma_f)$ is an analytic subset
of $Q \times \P_1$. We set $\Phi = (\pi \times \id_{\P_1})|_{\Gamma_f}: \Gamma_f \to \widehat \Gamma
\cong \Gamma_f \hq G$.

\begin{prop}\label{invariantsgodown}Assume that $X_{\mathrm{reg}} \cap \mu^{-1}(0) \neq \emptyset$.
Let $f \in \mathscr{M}_X(X(\mu))^G$. Then, the analytic set $\widehat\Gamma \subset Q \times \P_1$ is a
meromorphic graph and defines a meromorphic function $\widehat f$ on $Q$ with $f = \pi^* (\widehat f)$.
\end{prop}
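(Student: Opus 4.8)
The plan is to prove that $\widehat\Gamma$ is a meromorphic graph by finding a suitable nowhere-dense analytic set $B \subset Q$ over which $\widehat\Gamma$ is holomorphic, and then to apply Proposition~\ref{graphtofunction} to produce $\widehat f$. The natural candidate for the "good" locus in the quotient is the geometric-quotient locus from the previous lemma: on $\pi(X^s)$ the map $\pi$ separates $G$-orbits and is a genuine geometric quotient, so invariant functions should descend there without ambiguity. First I would restrict attention to $X^s = G \acts \mu^{-1}(0)$ and observe that, since $f$ is $G$-invariant, its graph $\Gamma_f$ is a $G$-invariant analytic subset of $X(\mu) \times \P_1$; hence $\widehat\Gamma = (\pi \times \id_{\P_1})(\Gamma_f)$ is analytic in $Q \times \P_1$ by property~(4) of the analytic Hilbert quotient, and $\Phi = (\pi \times \id_{\P_1})|_{\Gamma_f}$ realises $\widehat\Gamma$ as the analytic Hilbert quotient $\Gamma_f \hq G$.

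Next I would identify where $\widehat\Gamma$ fails to be a holomorphic graph. The polar set $P_f$ is a $G$-invariant (since $f$ is $G$-invariant) nowhere-dense analytic subset of $X(\mu)$, and on $X^s \setminus P_f$ the function $f$ is holomorphic and $G$-invariant, so by the universal property~(1) it factors as $f = \bar f \circ \pi$ for a holomorphic $\bar f$ on the image. The set over which we must excise is $B \definiere \pi(P_f \cap X(\mu)) \cup (Q \setminus \pi(X^s))$; by property~(4) the first piece is analytic, and $Q \setminus \pi(X^s)$ is analytic and nowhere dense because $\pi(X^s)$ is analytically Zariski-open and dense by the preceding lemma. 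The key verification is that over $Q \setminus B$ the projection $\sigma: \widehat\Gamma \to Q$ is biholomorphic onto its image and avoids the point at infinity $[1:0]$: this follows because on the geometric-quotient locus the fibres of $\pi$ are single orbits on which the $G$-invariant function $f$ is constant, so $\widehat\Gamma$ lies over each point in a single point $[\,\bar f : 1\,]$, which also gives directly the defining relation $f = \pi^*(\widehat f)$.

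The main obstacle I anticipate is the transition from the geometric-quotient locus $\pi(X^s)$, where the argument is clean because orbits are closed and separated, to the rest of $Q$: a priori $\widehat\Gamma$ could be a multisheeted or thickened object over points outside $\pi(X^s)$ where distinct orbits are identified by $\pi$. The remedy is that meromorphy of a graph is a condition only off a nowhere-dense analytic set, so it suffices to have a holomorphic graph over the dense open $Q \setminus B$; the analytic-set structure of $\widehat\Gamma$ near the exceptional locus is then automatically controlled, and one need not analyse the fibres of $\Phi$ there explicitly. Concretely, I would check that $A \definiere B$ and $\sigma^{-1}(A)$ are nowhere dense in $Q$ and $\widehat\Gamma$ respectively --- the latter because $\Phi$ is surjective with $\Phi^{-1}$ of a nowhere-dense set being nowhere dense by dimension reasons --- and conclude that $\widehat\Gamma$ is a meromorphic graph. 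Applying the converse direction of Proposition~\ref{graphtofunction} yields the unique meromorphic $\widehat f \in \mathscr{M}_Q(Q)$ with $\Gamma_{\widehat f} = \widehat\Gamma$, and the relation $f = \pi^*(\widehat f)$ follows by comparing graphs over the dense set where both sides are holomorphic.
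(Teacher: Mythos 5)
Your proposal is correct and takes essentially the same route as the paper's proof: you excise the same set (your $Q \setminus B$ is exactly the paper's $\pi(V)$ for $V = X^s \setminus P_f$, which is $\pi$-saturated since $P_f$ is $G$-invariant and $\pi|_{X^s}$ is geometric), descend the invariant holomorphic function $f|_V$ to a holomorphic $\widehat f$ on $\pi(V)$ whose graph is $\widehat\Gamma|_{\pi(V)}$, and obtain nowhere-density of $\sigma^{-1}(B)$ in $\widehat\Gamma$ from surjectivity of $\Phi$ together with nowhere-density of its $\Phi$-preimage in $\Gamma_f$. The only cosmetic difference is in that last step, where you appeal to ``dimension reasons'' while the paper uses the fact that $\Gamma_f$ is itself a meromorphic graph over $X(\mu)$ to conclude that $\widetilde{\pr}_1^{-1}\bigl(X(\mu)\setminus V\bigr)$ is nowhere dense in $\Gamma_f$.
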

\vspace{-3mm} \begin{proof}
We summarise our setup in the following commutative diagram:
\begin{equation*}
\begin{xymatrix}{
                            & X(\mu)\ar@/^7pc/[ddd]^{\pi}                            & \\
 X(\mu) \times \mathbb{P}_1\ar[d]_{\pi \times \id_{\P_1}} &\ar@{_{(}->}[l]
\Gamma_f\ar[u]_{\widetilde{\pr}_1}\ar[r]\ar[d]^{\Phi}&
\mathbb{P}_1 \\
Q \times \mathbb{P}_1 & \ar@{_{(}->}[l] \widehat \Gamma_{\ } \ar[r]\ar[d]^{\pr_1} & \mathbb{P}_1\\
& Q &
}
\end{xymatrix}
\end{equation*}
Since $f \in \mathscr{M}_X(X(\mu))^G$, the polar set $P_f$ of $f$ is $G$-invariant.
Let $V \definiere X^s \setminus P_f$. Since $X^s$ is Zariski-open and dense in $X(\mu)$ and
since $P_f$ is a nowhere dense analytic subset of $X(\mu)$, $V$ is analytically Zariski-open and
dense in $X(\mu)$. Since $\pi|_{X^s}: X^s \to \pi(X^s)$ is a geometric quotient, and since $P_f$ is
$G$-invariant, $V$ is $\pi$-saturated. Thus, $\pi(V)$ is analytically Zariski-open
and dense in $Q$. The subset $\pi(V) \subset Q$ is isomorphic to $V\hq G$ and we have $f\in \mathscr{H}_X(V)^G \cong
\mathscr{H}_Q(\pi(V))$. Therefore, there exists a unique holomorphic function $\widehat f \in
\mathscr{H}_Q(\pi(V))$ such that $f = \widehat f \circ \pi$. We have $\Gamma_{\widehat f} = \widehat \Gamma|_{\pi(V)} \subset \pi(V) \times \C$. It follows that
$\widehat \Gamma$ is a holomorphic graph over $\pi(V)$.

The set $A \definiere X(\mu) \setminus V$ is $\pi$-saturated and analytic in $X(\mu)$. Hence,
$\pi(A) = Q \setminus \pi(V)$ is a nowhere dense analytic subset of $Q$. Since $\Gamma_f$ is a
meromorphic graph over $X(\mu)$ and since $A$ is nowhere dense in $X(\mu)$, $\widetilde{\pr}_1^{-1}(A) = \Phi^{-1}_{\ }(\pr_1^{-1}(\pi(A)))$
is nowhere dense in $\Gamma_f$. It follows that $\pr_1^{-1}(\pi(A))$ is nowhere dense in $\widehat
\Gamma$.

We have hereby shown that $\widehat \Gamma$ is a
meromorphic graph in
$Q\times  \P_1$. By Proposition \ref{graphtofunction}, it defines a
meromorphic function $\widehat f$ on $Q$. It follows from the construction that $f = \pi^*(\widehat f)$
holds.
\end{proof}
\begin{lemma}\label{dimensionofquotient}
Assume $X_{\mathrm{reg}}\cap \mu^{-1}(0) \neq \emptyset$. Then, $\dim Q = \dim X -m,$ where $m = \max_{x\in X}\{\dim G\acts x\}$.
\end{lemma}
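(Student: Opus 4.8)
The plan is to move the computation onto the analytically Zariski-open dense locus $X^s \subset X(\mu)$ produced by the preceding lemma, where $\pi$ is a geometric quotient, and then to read off $\dim Q$ from the fibre dimension theorem for holomorphic maps.

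First I would pin down the dimensions of the source and target of $\pi|_{X^s}$. Because $X$ is $G$-irreducible, $X(\mu)$ is dense in $X$, so $\dim X(\mu) = \dim X$; and since $X^s$ is analytically Zariski-open and dense in $X(\mu)$ by the preceding lemma, it is pure-dimensional with $\dim X^s = \dim X$. On the quotient side, the preceding lemma gives that $\pi(X^s) = \pi(X_{\mathrm{reg}}\cap\mu^{-1}(0))$ is analytically Zariski-open and dense in the irreducible space $Q$, so that $\dim\pi(X^s) = \dim Q$.

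Next I would determine the common fibre dimension of $\pi|_{X^s}$. For $x \in X^s$ the preceding lemma shows that $G\acts x$ equals the closed orbit $G\acts y$ of some $y\in X_{\mathrm{reg}}\cap\mu^{-1}(0)$; as $X_{\mathrm{reg}}$ is $G$-invariant this forces $x\in X_{\mathrm{reg}}$, hence $\dim G\acts x = m$. Since $\pi|_{X^s}$ is a geometric quotient, each of its fibres is exactly one such orbit, so all fibres of the surjective holomorphic map $\pi|_{X^s}: X^s \to \pi(X^s)$ have constant dimension $m$.

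Finally I would invoke the fibre dimension theorem: for a surjective holomorphic map of complex spaces, the generic fibre over each component of the image has dimension equal to the difference of the dimensions of that source component and of the image. Constancy of the fibre dimension promotes this to the exact equality $m = \dim X^s - \dim\pi(X^s) = \dim X - \dim Q$, which rearranges to $\dim Q = \dim X - m$. The step I expect to require the most care is precisely this last one in the analytic (possibly singular) category: one must use that $X^s$ is pure-dimensional and keep track of the image of each of its irreducible components in order to upgrade the a priori inequality $\dim X^s \geq \dim\pi(X^s) + m$ to an equality. If $G$ is disconnected I would pass to the identity component $G^\circ$, whose orbits are irreducible of the same dimension $m$, so that the theorem can be applied component by component without the orbits straddling several components of $X^s$.
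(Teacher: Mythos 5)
Your proof is correct and is essentially the paper's own argument: restrict to $X^s$, where $\pi$ is a geometric quotient with fibres equal to orbits of dimension $m$ (the paper simply asserts $X^s \subset X_{\mathrm{reg}}$, which you verify via the closed-orbit identification), note that $\pi(X^s)$ is analytically Zariski-open and dense in $Q$, and conclude by Remmert's theorem on fibre dimensions of holomorphic maps. The extra care you take about pure-dimensionality and disconnected $G$ is sound but not needed beyond what the paper's one-line citation of Remmert already covers.
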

\vspace{-3mm} \begin{proof}
Since $X^s \subset X_{\mathrm{reg}}$ and $\pi|_{X^s}: X^s \rightarrow \pi(X^s)$ is a geometric
quotient, we have $\dim \pi^{-1} (q) =  m$ for every $q \in \pi(X^s)$. The set $\pi(X^s)$ is
Zariski-open and dense in $Q$. It follows that $\dim Q = \dim X - m$ by the theorem on the fibre
dimension of holomorphic maps (Satz 15 in \cite{RemmertAbbildungen}).
\end{proof}
\begin{thm}\label{momentumquotientsMoishezon}
Let $K$ be a compact Lie group and $G=K^\C$ its complexification. Let $X$ be a $G$-irreducible
algebraic Hamiltonian $G$-variety with momentum map
$\mu: X \rightarrow \k ^*$. Let $Q= X(\mu) \hq G$ denote the analytic Hilbert
quotient. Then, $\trdeg_\C\mathscr{M}_Q(Q)
\geq \dim Q$. In particular, if $Q$ is compact, then it is Moishezon.
\end{thm}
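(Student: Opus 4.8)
The strategy is to produce $\dim Q$ algebraically independent meromorphic functions on $Q$ by descending $G$-invariant rational functions from $X$ through Proposition~\ref{invariantsgodown}, and then, in the compact case, to pair this lower bound with the classical upper bound $\trdeg_\C \mathscr{M}_Q(Q) \leq \dim Q$ for compact irreducible spaces.

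First I would reduce to the generic situation. If $\mu^{-1}(0) = \emptyset$, then $X(\mu) = \emptyset$ and $Q$ is empty, so there is nothing to prove; hence assume $\mu^{-1}(0) \neq \emptyset$. By Lemma~\ref{reductiontogeneric} I may replace $X$ by a $G$-irreducible subvariety $Y$ with $Y(\mu)\hq G \cong Q$ and $Y_{\mathrm{reg}} \cap \mu|_Y^{-1}(0) \neq \emptyset$. As this replacement leaves $Q$ unchanged, I may and do assume from now on that $X_{\mathrm{reg}} \cap \mu^{-1}(0) \neq \emptyset$, which is precisely the hypothesis under which both Proposition~\ref{invariantsgodown} and Lemma~\ref{dimensionofquotient} are available.

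The core of the argument is then the following. The pullback $\pi^*$ along the surjective quotient map $\pi\colon X(\mu)\to Q$ is an injective homomorphism from $\mathscr{M}_Q(Q)$ into $\mathscr{M}_X(X(\mu))$, and by Proposition~\ref{invariantsgodown} its image is exactly the field $\mathscr{M}_X(X(\mu))^G$ of $G$-invariant meromorphic functions; in particular $\pi^*$ induces an isomorphism $\mathscr{M}_Q(Q) \cong \mathscr{M}_X(X(\mu))^G$. Because $X(\mu)$ is Zariski-dense in $X$, restriction of rational functions defines an injective homomorphism of fields $\C(X)^G \hookrightarrow \mathscr{M}_X(X(\mu))^G$: a nonzero $G$-invariant rational function does not vanish identically on the dense subset $X(\mu)$ and restricts there to a $G$-invariant meromorphic function. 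By the classical theory of rational invariants (Rosenlicht), the field $\C(X)^G$ has transcendence degree $\dim X - m$ over $\C$, where $m = \max_{x\in X}\dim G\acts x$, using that on the $G$-irreducible variety $X$ the generic orbit dimension equals the maximal orbit dimension $m$. Combining this with Lemma~\ref{dimensionofquotient} yields
\[
\trdeg_\C \mathscr{M}_Q(Q) = \trdeg_\C \mathscr{M}_X(X(\mu))^G \geq \trdeg_\C \C(X)^G = \dim X - m = \dim Q .
\]
For the final assertion, if $Q$ is compact then, $Q$ being irreducible, the classical inequality $\trdeg_\C \mathscr{M}_Q(Q) \leq \dim Q$ holds, so equality is forced and $Q$ is Moishezon by definition.

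I expect the main obstacle to lie in the transcendence-degree bookkeeping for the invariants rather than in the descent itself: one must be sure that $\C(X)^G$ genuinely has transcendence degree $\dim X - m$ (which rests on Rosenlicht's rational quotient together with the equality of generic and maximal orbit dimension, valid on the irreducible — or $G$-irreducible — variety $X$) and that restricting these algebraically independent invariants to the merely analytically Zariski-open but Zariski-dense subset $X(\mu)$ preserves their algebraic independence. The preliminary reduction via Lemma~\ref{reductiontogeneric} serves exactly to secure the hypothesis $X_{\mathrm{reg}} \cap \mu^{-1}(0)\neq\emptyset$ on which both Proposition~\ref{invariantsgodown} and the dimension formula depend.
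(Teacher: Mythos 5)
Your proposal is correct and follows essentially the same route as the paper's proof: reduce to the case $X_{\mathrm{reg}} \cap \mu^{-1}(0) \neq \emptyset$ via Lemma~\ref{reductiontogeneric}, then combine Rosenlicht's theorem ($\trdeg_\C \C(X)^G = \dim X - m$), the dimension formula of Lemma~\ref{dimensionofquotient}, and the descent of invariant meromorphic functions from Proposition~\ref{invariantsgodown} into a chain of (in)equalities, finishing the compact case with the classical bound $\trdeg_\C \mathscr{M}_Q(Q) \leq \dim Q$. The only cosmetic difference is that you package Proposition~\ref{invariantsgodown} as an isomorphism $\mathscr{M}_Q(Q) \cong \mathscr{M}_X(X(\mu))^G$, whereas the paper only uses (and only needs) the field embedding $\mathscr{M}_X(X(\mu))^G \hookrightarrow \mathscr{M}_Q(Q)$.
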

\vspace{-3mm} \begin{proof}
By Lemma \ref{reductiontogeneric}, we can assume without loss of generality that $X_{\mathrm{reg}}\cap \mu^{-1}(0) \neq \emptyset$. By a Theorem of Rosenlicht \cite{Rosenlicht2}, we have $\trdeg _\C \C(X)^G = \dim X - m$, where $m$ is defined as in Lemma \ref{dimensionofquotient}.
Furthermore, by Proposition \ref{invariantsgodown}, we have defined a field
homomorphism $\mathscr{M}_X(X(\mu))^G \hookrightarrow \mathscr{M}_Q(Q)$. Hence, we get the following
chain of inequalities
\begin{align*}
\dim Q     &=   \dim X -m &&\text{by Lemma \ref{dimensionofquotient}}\\
           &= \trdeg_\C \C(X)^G   &&\text{by
           Rosenlicht's theorem}\\
           &\leq \trdeg_\C \mathscr{M}_X\left(X (\mu) \right)^G
           &&  \\
           &\leq \trdeg_\C\mathscr{M}_Q(Q) &&\text{by Prop. \ref{invariantsgodown}}.\qedhere
\end{align*}
\end{proof}
\section{Singularities of momentum map quotients}\label{section:singularitiesofmomentumquotients}
\subsection{$1$-rational singularities}
A normal algebraic variety or normal complex space $X$ is said to have \emph{$\mathit{1}$-rational
singularities}, if for every resolution $f: \widetilde X \rightarrow X$ of $X$, we have $R^1f_*\mathscr O _{\widetilde X} = 0$ or $R^1f_* \mathscr{H}_{\widetilde X} = 0$, respectively. Here, $R^1f_*\mathscr{O}_{\widetilde X}$ denotes the sheaf associated to the presheaf $U \mapsto H^1(\pi^{-1}(U), \mathscr{O}_{\widetilde X})$, and $R^1f_* \mathscr{H}_{\widetilde X}$ is defined analogously.

As in the case of rational singularities (where one requires the vanishing of all higher direct image sheaves of a resolution) using \cite{Hironaka}, \S0.7, Cor.\ 2, it can be shown that the condition $R^1f_*\mathscr O _{\widetilde X} = 0$ is independent of the chosen resolution.
Similarly to \cite{KollarMori}, Corollary 5.11, one proves that an algebraic variety $X$ has
$1$-rational singularities if and only if $X^{h}$ has $1$-rational singularities.
\begin{rem}
The class of varieties with $1$-rational singularities is a strictly bigger than the class of varieties with rational singularities: the vertex of the cone over a smooth quartic surface in $\P_3(\C)$ is a $1$-rational singularity that is not rational.
\end{rem}

\subsection{Singularities of algebraic Hilbert quotients}
In our study of singularities of momentum map quotients we use the following result about the singularities of algebraic Hilbert quotients.
\begin{thm}[\cite{GrebSingularities}]\label{rationalsingularities}
Let $G$ be a complex reductive Lie group and let $X$ be an algebraic $G$-variety such that the
algebraic Hilbert quotient $\pi: X \to X\hq G$ exists. If $X$ has $\mathit{1}$-rational
singularities, then $X\hq G$ has $\mathit{1}$-rational singularities.
\end{thm}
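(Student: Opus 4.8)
The plan is to adapt the strategy behind Boutot's theorem on the rationality of quotient singularities, but to run the argument entirely in degree one, so that it uses only the hypothesis $R^1\phi_*\mathscr{O}_{\widetilde X}=0$ and produces only the conclusion $R^1 g_*\mathscr{O}_{\widetilde Y}=0$. Since having $1$-rational singularities is local on $X\hq G$ and $\pi$ is affine, I would first localise and assume that both $X$ and $Y\definiere X\hq G$ are affine; we may also assume $X$, and hence $Y$, irreducible, treating connected components separately. As $X$ is $1$-rational it is normal, and the invariant ring of a normal ring under a reductive group is again normal, so $Y$ is normal. Finally, because $G$ is reductive and we work in characteristic zero, the Reynolds operator gives an $\mathscr{O}_Y$-linear projection $\pi_*\mathscr{O}_X\to(\pi_*\mathscr{O}_X)^G=\mathscr{O}_Y$ splitting the inclusion, so $\mathscr{O}_Y$ is a direct summand of $\pi_*\mathscr{O}_X$ as an $\mathscr{O}_Y$-module.

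Next I would transport the hypothesis through $\pi$. Choosing a $G$-equivariant resolution $\phi:\widetilde X\to X$, the $1$-rationality of $X$ gives $R^1\phi_*\mathscr{O}_{\widetilde X}=0$, while $\phi_*\mathscr{O}_{\widetilde X}=\mathscr{O}_X$ by normality. Since $\pi$ is affine, its higher direct images vanish, so the Leray spectral sequence for $\psi\definiere\pi\circ\phi:\widetilde X\to Y$ degenerates to give $R^1\psi_*\mathscr{O}_{\widetilde X}=\pi_*R^1\phi_*\mathscr{O}_{\widetilde X}=0$ and $\psi_*\mathscr{O}_{\widetilde X}=\pi_*\mathscr{O}_X$. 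In this way I obtain a morphism from the smooth variety $\widetilde X$ to $Y$ whose first direct image vanishes and whose zeroth direct image contains $\mathscr{O}_Y$ as a split summand; crucially, no higher-degree vanishing on $X$ was required.

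Now fix a resolution $g:\widetilde Y\to Y$; as $1$-rationality is independent of the resolution, it suffices to show $R^1 g_*\mathscr{O}_{\widetilde Y}=0$. Form the fibre product $\widetilde X\times_Y\widetilde Y$, on which $G$ acts through the first factor, and let $W$ be a $G$-equivariant resolution of its component dominating $Y$, with induced maps $\alpha:W\to\widetilde X$ and $\beta:W\to\widetilde Y$ satisfying $g\circ\beta=\psi\circ\alpha$. Because $\widetilde Y\to Y$ is an isomorphism over a dense open set, $\alpha$ is a proper birational map of smooth varieties, so $R\alpha_*\mathscr{O}_W=\mathscr{O}_{\widetilde X}$ and hence $R^1(g\beta)_*\mathscr{O}_W=R^1\psi_*\mathscr{O}_{\widetilde X}=0$. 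The five-term exact sequence of the Leray spectral sequence for $g\circ\beta$ then forces $R^1 g_*(\beta_*\mathscr{O}_W)=0$.

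The remaining step, which I expect to be the main obstacle, is to descend this vanishing from $\beta_*\mathscr{O}_W$ to $\mathscr{O}_{\widetilde Y}$; this is exactly where the invariant theory must be reconciled with the resolution. Concretely, I would prove the key lemma $(\beta_*\mathscr{O}_W)^G=\mathscr{O}_{\widetilde Y}$. The inclusion $\mathscr{O}_{\widetilde Y}\subseteq(\beta_*\mathscr{O}_W)^G$ is clear, since $G$ acts trivially on $\widetilde Y$. For the reverse, a $G$-invariant regular function on $\beta^{-1}(V)$ defines an element of $\C(W)^G=\C(X)^G=\C(Y)=\C(\widetilde Y)$, because the quotient is also a rational quotient (the generic fibre of $\pi$ being a single closed $G$-orbit, by Rosenlicht); thus $(\beta_*\mathscr{O}_W)^G$ is a subsheaf of the rational functions on $\widetilde Y$. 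As $\beta$ is dominant and $\widetilde Y$ is normal, such a rational function can have a pole along no prime divisor without its pullback acquiring one, so it is regular on $V$. Granting the lemma, the Reynolds operator on the $G$-sheaf $\beta_*\mathscr{O}_W$ furnishes an $\mathscr{O}_{\widetilde Y}$-linear retraction onto $(\beta_*\mathscr{O}_W)^G=\mathscr{O}_{\widetilde Y}$, exhibiting $\mathscr{O}_{\widetilde Y}$ as a direct summand of $\beta_*\mathscr{O}_W$. Applying the additive functor $R^1 g_*$ then shows $R^1 g_*\mathscr{O}_{\widetilde Y}$ to be a direct summand of $R^1 g_*(\beta_*\mathscr{O}_W)=0$, whence $R^1 g_*\mathscr{O}_{\widetilde Y}=0$ and $X\hq G$ has $1$-rational singularities.
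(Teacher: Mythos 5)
Your skeleton up to the vanishing $R^1g_*(\beta_*\mathscr{O}_W)=0$ is sound: the localisation, the Reynolds splitting of $\mathscr{O}_Y\to\pi_*\mathscr{O}_X$, the degeneration of Leray along the affine map $\pi$, the fibre-product construction and the five-term sequence all work, granted a $G$-equivariant resolution and the correct choice of component (it must be the component dominating $\widetilde X$, not merely one dominating $Y$, for $\alpha$ to be birational). The proof breaks at exactly the point you yourself flag as the crux, and it breaks for a reason this paper itself highlights. The identity $\C(X)^G=\C(Y)$ is \emph{false} for algebraic Hilbert quotients: Rosenlicht's theorem concerns the \emph{rational} quotient, whose function field $\C(X)^G$ has transcendence degree $\dim X-\max_{x}\dim G\acts x$, and this exceeds $\dim X\hq G$ whenever the generic $G$-orbit is not closed; the generic fibre of $\pi$ is then a whole family of orbits, not a single closed one. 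The paper's own example in Section 3 is a counterexample to your claim: for $\C^*$ acting on $\C^2$ by scaling the quotient is a point, yet $z/w$ is an invariant rational function. So your assertion that $(\beta_*\mathscr{O}_W)^G$ consists of rational functions coming from $\widetilde Y$ has no basis. (The descent that \emph{is} available is the sheaf identity $(\pi_*\mathscr{O}_X)^G=\mathscr{O}_Y$, which lets invariant \emph{regular} functions descend over the locus $g^{-1}(U)$ where $g$ is an isomorphism; invariant rational functions do not descend.)

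The pole-chasing half of your key-lemma argument has an independent gap: dominance of $\beta$ plus normality of $\widetilde Y$ does not imply that a pole downstairs produces a pole upstairs. Pullback along a dominant morphism kills any pole whose polar divisor is not dominated by a divisor of the source; for instance $(s,t)\mapsto(s,st)$ pulls $y/x$, which has a pole along $\{x=0\}$, back to the regular function $t$. To exclude poles of the descended function along a $g$-exceptional prime divisor $E_1$ you must know that some prime divisor of $W$ dominates $E_1$, which (via the fibre-dimension theorem) one gets once the main component $Z\subset\widetilde X\times_Y\widetilde Y$ is known to surject onto $\widetilde Y$. That is a genuine assertion about reductive quotients under the non-flat base change $\widetilde Y\to Y$; it is provable --- e.g.\ since invariants under a linearly reductive group commute with arbitrary base change in characteristic zero, $X\times_Y\widetilde Y\to\widetilde Y$ is again a good quotient, hence maps the closed $G$-invariant set $Z$ onto a closed, dense, therefore full, subset of $\widetilde Y$ --- but nothing of this kind appears in your proposal, and it is precisely where the difficulty of Boutot-type theorems is concentrated. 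Note also that this is a genuinely different route from the one the paper relies on: the proof in \cite{GrebSingularities}, following Boutot, sidesteps the descent problem entirely by applying the Reynolds splitting to local cohomology and converting injectivity into the vanishing of $R^1g_*\mathscr{O}_{\widetilde Y}$ by duality and Grauert--Riemenschneider vanishing, the observation stressed in the paper being that this argument can be run one cohomological degree at a time. If you supply the missing surjectivity statement and replace the Rosenlicht step by the correct sheaf-level descent, your argument would give an alternative, duality-free proof; as written, its central lemma is unproved and the justification offered for it is false.
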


\begin{rem}
The proof closely follows Boutot's proof \cite{Boutot} that algebraic Hilbert quotients of varieties with rational singularities have rational singularities. The main point to check is that in Boutot's arguments it is possible to treat the different cohomology degrees separately .
\end{rem}
\subsection{Local linearisation by the Slice Theorem}
Let $K$ be a compact Lie group and let $X$ be an algebraic Hamiltonian $K^\C$-variety with momentum map $\mu: X \to \mathfrak{k}^*$. Our goal is
to show that if $X$ has $1$-rational singularities, then the analytic Hilbert quotient $X(\mu)\hq G$ has $1$-rational singularities. This section is devoted to the reduction of this problem to the equivalent question for algebraic Hilbert quotient with the help of a slice theorem. We start with two elementary lemmata.

\begin{lemma}\label{affineinprojective}
Let $G$ be a complex reductive Lie group and $V$ a $G$-module. Consider the
induced action of $G$ on $\P(V)$ and let $[v] \in \P(V)^G$. If $L \definiere \C v \subset V$ and $L^\perp$ is a $G$-stable complementary subspace to $L$ in $V$, then $W \definiere \P(V) \setminus \P(L^\perp)$ is a $G$-invariant affine open neighbourhood of $[v]$ in $\P(V)$. Furthermore, $W$ is $G$-equivariantly isomorphic to the $G$-module $L^\perp \otimes L^{-1}$, where $L^{-1}$ is the dual of the $G$-module $L$.

\end{lemma}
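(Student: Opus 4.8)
The plan is to first dispose of the topological and affineness assertions, which are immediate, and then to build the claimed $G$-equivariant isomorphism by a graph construction. Since $[v] \in \P(V)^G$, the line $L = \C v$ is $G$-stable, so $G$ acts on $L$ through a character $\chi \colon G \to \C^*$ with $g \acts v = \chi(g)\, v$. As $L^\perp$ is $G$-stable, the projective-linear subspace $\P(L^\perp) \subset \P(V)$ is closed and $G$-invariant, hence its complement $W = \P(V) \setminus \P(L^\perp)$ is open and $G$-invariant. Because $V = L \oplus L^\perp$ we have $L \cap L^\perp = 0$, so $[v] \notin \P(L^\perp)$ and thus $[v] \in W$. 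Finally $\dim L = 1$ makes $\P(L^\perp)$ a hyperplane, and the complement of a hyperplane in $\P(V)$ is isomorphic to affine space; in particular $W$ is affine.

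For the isomorphism I would identify the points of $W$ with the lines $\ell \subset V$ satisfying $\ell \cap L^\perp = 0$. For each such $\ell$ the projection $p \colon V \to L$ along $L^\perp$ restricts to a linear isomorphism $p|_\ell \colon \ell \xrightarrow{\sim} L$; composing its inverse with the projection $q \colon V \to L^\perp$ along $L$ produces
\[
\Phi(\ell) \definiere q \circ (p|_\ell)^{-1} \in \mathrm{Hom}(L, L^\perp) = L^\perp \otimes L^{-1}.
\]
Conversely, every $T \in \mathrm{Hom}(L,L^\perp)$ is recovered as $\Phi$ applied to its graph $\{x + Tx \mid x \in L\}$, a line meeting $L^\perp$ only in $0$; thus $\Phi$ is bijective. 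Choosing $v$ as a basis of $L$ and any basis of $L^\perp$ identifies $W$ with the standard affine chart $\{x_0 \neq 0\}$ and exhibits $\Phi$ in coordinates as $[x_0 : \dots : x_n] \mapsto (x_1/x_0, \dots, x_n/x_0)$, which is an isomorphism of algebraic varieties.

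The step that requires care, and which I regard as the crux, is the $G$-equivariance of $\Phi$ with respect to the natural $G$-action on $L^\perp \otimes L^{-1}$, i.e.\ the action $(g \acts T)(x) = g \acts T(g^{-1} \acts x)$ on $\mathrm{Hom}(L, L^\perp)$. Writing a point of $W$ as the line through the unique representative $v + w'$ with $w' \in L^\perp$, one computes $g \acts (v + w') = \chi(g)\, v + g \acts w'$, whose representative with $L$-component equal to $v$ is $v + \chi(g)^{-1}(g \acts w')$. Hence $\Phi$ sends $g \acts \ell$ to the homomorphism $v \mapsto \chi(g)^{-1}(g \acts w')$, and a direct check shows this equals $g \acts \Phi(\ell)$: the factor $\chi(g)^{-1}$ is precisely the character by which $G$ acts on $L^{-1}$, so the rescaling forced by keeping the $L$-component normalised to $v$ is exactly the twist encoded in the tensor factor $L^{-1}$. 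This matches the two actions and completes the identification $W \cong L^\perp \otimes L^{-1}$ as $G$-modules.
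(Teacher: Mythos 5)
Your proof is correct. Note that the paper states this lemma without any proof, treating it as a standard fact, so there is nothing to compare against; your argument --- identifying $W$ with the graphs of homomorphisms in $\mathrm{Hom}(L,L^\perp)\cong L^\perp\otimes L^{-1}$ via the two projections of the splitting $V=L\oplus L^\perp$, and checking that the normalisation factor $\chi(g)^{-1}$ is exactly the twist by which $G$ acts on $L^{-1}$ --- is precisely the canonical argument one would expect, and all steps (affineness of the hyperplane complement, bijectivity, algebraicity in coordinates, equivariance) are carried out correctly.
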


\begin{lemma}\label{invariantaffine}
Let $G$ be a connected complex reductive Lie group and let $X$ be a normal
algebraic $G$-variety. Let $x \in X$ have reductive stabiliser $G_x$
in $G$. Then, there exists an affine $G_x$-invariant open
neighbourhood of $x$ in $X$.
\end{lemma}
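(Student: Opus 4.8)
The plan is to reduce the statement to a linear situation via an equivariant projective embedding and then to cut out an affine neighbourhood using the reductivity of $G_x$. \emph{First}, since $G$ is connected and $X$ is normal, Sumihiro's theorem on normal $G$-varieties provides a $G$-invariant quasi-projective open neighbourhood of $x$ together with a $G$-equivariant locally closed embedding of that neighbourhood into $\P(V)$ for a suitable $G$-module $V$. Replacing $X$ by this neighbourhood, we may assume that there is a $G$-equivariant locally closed embedding $\iota\colon X \embeds \P(V)$. As $G_x$ fixes $x$, the image $[v] \definiere \iota(x)$ lies in $\P(V)^{G_x}$.

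\emph{Second}, set $L \definiere \C v$. Since $G_x$ is reductive, complete reducibility yields a $G_x$-stable complement $L^\perp$ to $L$ in $V$, and Lemma~\ref{affineinprojective}, applied to the group $G_x$, shows that $W \definiere \P(V) \setminus \P(L^\perp)$ is a $G_x$-invariant affine open neighbourhood of $[v]$. The intersection $U \definiere \iota(X) \cap W$ is then a $G_x$-invariant open neighbourhood of $[v]$, locally closed in $W$. Let $Z$ denote the closure of $U$ in $W$; being closed in the affine variety $W$, it is an affine $G_x$-variety, and $F \definiere Z \setminus U$ is a $G_x$-invariant closed subset of $Z$ with $[v] \notin F$.

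\emph{Third}, it remains to replace the quasi-affine $U$ by a genuinely affine $G_x$-invariant neighbourhood of $[v]$. Since $Z$ is affine and $[v] \notin F$, choose $f \in \C[Z]$ with $f|_F = 0$ and $f([v]) = 1$. The ideal of functions vanishing on $F$ is a $G_x$-submodule of $\C[Z]$, so applying the Reynolds operator $R$ of the reductive group $G_x$ produces a $G_x$-invariant function $h \definiere R(f)$ that still vanishes on $F$; moreover $h([v]) = f([v]) = 1$ because evaluation at the $G_x$-fixed point $[v]$ factors through $R$. The principal open set $Z_h = \{h \neq 0\}$ is therefore affine, $G_x$-invariant, contains $[v]$, and is contained in $Z \setminus F = U \subseteq \iota(X)$; its preimage under $\iota$ is the desired neighbourhood of $x$.

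I expect the \emph{main obstacle} to be precisely this last step: passing from the quasi-affine locally closed piece $U$ to an honestly affine neighbourhood. The point is that a $G_x$-invariant open subset of an affine variety need not be affine, and one must actively separate $x$ from the boundary $F$ by a $G_x$-\emph{invariant} function. This is exactly where both the reductivity of $G_x$ (through the Reynolds operator) and the fact that $x$ is a $G_x$-fixed point are indispensable; connectedness of $G$, by contrast, enters only through the invocation of Sumihiro's theorem.
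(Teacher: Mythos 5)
Your proof is correct, and its skeleton coincides with the paper's: Sumihiro's theorem reduces to a $G$-stable locally closed subvariety of $\P(V)$, and Lemma~\ref{affineinprojective}, applied to the reductive subgroup $G_x$, produces the affine chart $\P(V)\setminus\P(L^\perp)$. The difference lies in how the final separation step is implemented. The paper works with the affine $G_x$-variety $W = \overline{X}\cap\bigl(\P(V)\setminus\P(L^\perp)\bigr)$ (your $Z$) and its algebraic Hilbert quotient $\pi\colon W \to W\hq G_x$: the disjoint $G_x$-invariant closed sets $\{x\}$ and $C = W\setminus X$ have disjoint images, one picks an affine open neighbourhood of $\pi(x)$ inside $(W\hq G_x)\setminus\pi(C)$, and pulls it back along the affine morphism $\pi$. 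You instead inline the separation: the Reynolds operator gives an invariant $h \in I(F)^{G_x}$ with $h([v])=1$ (your claim that evaluation at the fixed point factors through $R$ is correct, by Schur's lemma applied to the nontrivial isotypic components), and the principal open set $Z_h$ is affine for free. Both arguments rest on the same underlying fact --- reductivity of $G_x$ allows disjoint invariant closed sets to be separated by invariant functions; indeed, the separation property of the Hilbert quotient that the paper invokes is itself proved by exactly your Reynolds argument. What your version buys is self-containedness: it needs only complete reducibility, not the existence of $W\hq G_x$ (finite generation of invariants) nor the affineness of the quotient morphism. What the paper's version buys is brevity in context, since the Hilbert quotient machinery is already set up and reused throughout the paper.
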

\vspace{-3mm} \begin{proof}[Proof of Lemma~\ref{invariantaffine}]
Let $x \in X$ have reductive stabiliser $G_x$. Since $G$ is connected and $X$ is normal, by a result of Sumihiro \cite{completion}, there
exists a $G$-invariant quasi-projective open neighbourhood $U$ of $x$ in
$X$. Furthermore, there exists a $G$-linearised ample line bundle on
$U$, i.e., we can assume that $X=U$ is a $G$-invariant locally closed
subset of $\P(V)$, where $V$ is a $G$-module. Clearly, $V$ is also a
$G_x$-module and $x \in \P(V)^{G_x}$. Lemma \ref{affineinprojective}
implies that there exists a $G_x$-invariant affine open neighbourhood $W$ of
$x$ in $\overline X \subset \P(V)$. Let $C\definiere W \setminus X$ and let $\pi: W \rightarrow W\hq
G_x$ denote the algebraic Hilbert quotient. Then, $\{x\}$ and $C$
are disjoint $G_x$-invariant algebraic subsets of $W$ and hence
$\pi(x) \notin \pi(C) \subset W\hq G_x$. It follows that there exists
an affine open neighbourhood $V$ of $\pi(x)$ in $(W \hq G_x) \setminus
\pi (C)$. Since $\pi$ is an affine map, the inverse image $\pi^{-1} (V)$ is a $G_x$-invariant affine
open subset of $X$ containing $x$.
\end{proof}
Using the two previous lemmata, we now adapt the proof of the
holomorphic Slice Theorem (see \cite{HeinznerGIT}, \cite{ReductionOfHamiltonianSpaces}, \cite{CartanDecomposition} and \cite{SjamaarSlices}) to our algebraic situation.
\begin{thm}[Slice Theorem, algebraic version]\label{algebraicslice}
Let $K$ be a connected compact Lie group and $G = K^\C$ its complexification. Let $X$ be a
normal algebraic Hamiltonian $G$-variety with momentum map $\mu: X \to \mathfrak{k}^*$. Let $\pi:
X(\mu) \rightarrow X(\mu)\hq G$ be the analytic Hilbert quotient. Then, every point
in $\mu^{-1}(0)$ has a $\pi$-saturated open neighbourhood $U$ in $X(\mu)$
that is $G$-equivariantly biholomorphic to a saturated open
subset of an affine $G$-variety.
\end{thm}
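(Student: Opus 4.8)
The plan is to prove an equivariant slice theorem by reducing the holomorphic statement to the algebraic setting established in Lemma~\ref{invariantaffine} and Lemma~\ref{affineinprojective}. Let $x \in \mu^{-1}(0)$. Since $x$ lies in $\mu^{-1}(0)$, its $G$-orbit is closed in $X(\mu)$ and the stabiliser $G_x$ is a complex reductive Lie group by the general properties of analytic Hilbert quotients recalled in Section~1. First I would invoke Lemma~\ref{invariantaffine} to produce a $G_x$-invariant affine open neighbourhood $W$ of $x$ in $X$; this is where the normality hypothesis and the algebraicity of the $G$-action enter decisively, via Sumihiro's theorem and the linearisation argument. Passing to the associated complex space, $W^h$ is a Stein $G_x$-space containing $x$ as a point whose $G_x$-orbit may be taken to be closed after shrinking.

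Next I would form the associated fibre bundle $G \times_{G_x} W^h$, the quotient of $G \times W^h$ by the $G_x$-action $h \acts (g, w) = (g h^{-1}, h \acts w)$, equipped with the residual left $G$-action on the first factor. The natural $G$-equivariant holomorphic map $G \times_{G_x} W^h \to X(\mu)$, $[g, w] \mapsto g \acts w$, is defined near the image of $\{e\} \times \{x\}$ and sends the zero section to the orbit $G \acts x$. The key structural fact, which follows from the local theory of analytic Hilbert quotients for Stein spaces (cf.\ the properties listed after the definition, together with \cite{HeinznerGIT} and \cite{Snow}), is that $G \times_{G_x} W^h$ carries an analytic Hilbert quotient isomorphic to $W^h \hq G_x$, and the comparison map is a local biholomorphism onto a $\pi$-saturated neighbourhood. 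I would verify saturatedness by checking that the image is a union of fibres of $\pi$: since the map respects closed orbits and the unique closed orbit in each fibre over a point near $\pi(x)$ meets $\mu^{-1}(0)$, a point $y$ maps into the image precisely when $\overline{G \acts y}$ meets the image of the zero section.

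The main obstacle I anticipate is establishing that the comparison map
\[
\Psi : G \times_{G_x} W^h \longrightarrow X(\mu), \qquad [g,w] \mapsto g \acts w,
\]
restricts to a biholomorphism onto a \emph{saturated} open neighbourhood, rather than merely onto some open set. The difficulty is twofold: one must control the behaviour of $\Psi$ transverse to the orbit $G \acts x$, and one must shrink $W$ so that the induced map on quotients $W^h \hq G_x \to X(\mu) \hq G$ is injective and open. For the first point I would argue that $\Psi$ is a local biholomorphism along the zero section by a dimension count on the tangent spaces, using the reductivity of $G_x$ to split $\Lie(G) = \Lie(G_x) \oplus \mathfrak{m}$ as a $G_x$-module and identifying the normal directions. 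For the second point I would use property~(3) of analytic Hilbert quotients---that $\pi(A_1) \cap \pi(A_2) = \pi(A_1 \cap A_2)$ for $G$-invariant analytic sets---to separate the image of $x$ from the image of the complement of $W$, exactly as in the final separation step of the proof of Lemma~\ref{invariantaffine}. Once injectivity and openness on quotients are secured, $\Psi$ descends to a biholomorphism of quotients, and the saturatedness of the image follows because a saturated set is determined by its image in the quotient. This realises a $\pi$-saturated neighbourhood of $x$ as a saturated open subset of the affine $G$-variety $G \times_{G_x} W$, which is precisely the assertion of the theorem.
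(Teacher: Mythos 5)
There is a genuine gap, and it sits at the heart of your argument: you take as the fibre of your associated bundle a $G_x$-invariant \emph{affine open neighbourhood} $W$ of $x$ (from Lemma~\ref{invariantaffine}), which is an open and hence full-dimensional subset of $X$, and then claim that the natural map $\Psi: G \times_{G_x} W^h \to X(\mu)$, $[g,w] \mapsto g \acts w$, is a local biholomorphism along the zero section ``by a dimension count''. The dimension count in fact shows the opposite: $\dim (G \times_{G_x} W) = \dim G - \dim G_x + \dim W = \dim (G \acts x) + \dim X$, which strictly exceeds $\dim X$ as soon as the orbit $G \acts x$ is positive-dimensional. Concretely, the fibre of $\Psi$ through $[e,x]$ contains the set $\{[g, g^{-1}\acts x] \mid g^{-1} \acts x \in W\} \cong \{g \in G \mid g^{-1}\acts x \in W\}/G_x$, which is an open subset of $G/G_x$; so $\Psi$ is never injective, let alone locally biholomorphic, near the zero section except in the trivial case $\dim G\acts x = 0$. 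The splitting $\Lie(G) = \Lie(G_x) \oplus \mathfrak{m}$ cannot repair this: the problem is not the group directions but the fact that the full-dimensional $W$ already contains the orbit directions inside $X$, so they are counted twice.

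What is missing is precisely the construction of a genuine \emph{slice}: a $G_x$-invariant, locally closed, affine subvariety $S \ni x$ with $T_x X = T_x(G\acts x)\oplus T_x S$, so that $G\times_{G_x} S$ has the correct dimension and $[g,y]\mapsto g\acts y$ is locally biholomorphic at $[e,x]$. This transversality step is where the paper does its real work and where normality and algebraicity enter a second time: one embeds a Sumihiro neighbourhood $G$-equivariantly into $\P(V)$, uses Lemma~\ref{affineinprojective} to linearise at $x$ by an $H$-equivariant isomorphism $\psi: \P(V)\setminus \P(L^\perp) \to L^\perp \otimes L^{-1}$ (with $H = G_x$), chooses an $H$-stable linear complement $N$ to $B \definiere d\psi(x)\left(T_x(G \acts x)\right)$ --- this is where reductivity of $G_x$ is used --- and sets $S \definiere X \cap \P(W')$, where $\P(W') = \overline{\psi^{-1}(N)}$ for an $H$-invariant linear subspace $W'\subset V$; finally one intersects $S$ with a neighbourhood from Lemma~\ref{invariantaffine} to make it affine. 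After this point your remaining steps are essentially the right ones and match the paper: the holomorphic version of Luna's fundamental lemma (Lemma~\ref{fundamental}) upgrades the local biholomorphism to a biholomorphism on a $G$-stable, $\pi_S$-saturated open set, and the separation property $\pi(A_1)\cap \pi(A_2) = \pi(A_1\cap A_2)$ is used exactly as you propose, to discard the image of the complement and cut the image down to a $\pi$-saturated neighbourhood. But without the slice itself, the bundle you build is the wrong object and the argument does not start.
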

Here, a \emph{saturated subset of an affine $G$-variety $Y$} is a subset of $Y$ that is saturated
with respect to the algebraic Hilbert quotient $\pi_Y:Y \to Y\hq G$.
\begin{rem}
By the holomorphic Slice Theorem every point
in $\mu^{-1}(0)$ has a $\pi$-saturated open neighbourhood $U$ in $X(\mu)$
that is $G$-equivariantly biholomorphic to a $G$-invariant analytic subset $A$ of a saturated open subset of a $G$-module. The crucial point of Theorem \ref{algebraicslice} is that in our situation we can choose $A$ to be the intersection of an affine $G$-invariant subvariety with a saturated open subset of a $G$-module. If $X$ is smooth, this follows directly from the holomorphic Slice Theorem, since in this case, $A$ can be chosen to be a saturated open subset of a $G$-module.
\end{rem}
\vspace{-3mm} \begin{proof}[Proof of Theorem \ref{algebraicslice}]
As in the proof of Lemma \ref{invariantaffine}, we can assume that $X$ is a $G$-stable locally closed subvariety of $\P(V)$, where $V$ is a $G$-module. Let $x =[v] \in \mu^{-1}(0) \subset X \subset
\P(V)$ and set $H \definiere G_x$. Since $x \in \mu^{-1}(0)$, its stabiliser group $H$ is reductive (see
\cite{HeinznerGIT}, Section 4.2). Using Lemma \ref{affineinprojective} it follows that $V$ is isomorphic to $L \oplus L^\perp$ as an $H$-module, where $L = \C v$, that there exists an $H$-invariant affine open neighbourhood $U\definiere \P(V) \setminus \P(L^\perp)$ of $x$ in $\P(V)$ as well as an $H$-equivariant isomorphism $\psi: U \to L^\perp \otimes L^{-1}$ with $\psi(x) = 0$. Let $B \definiere d\psi (x)\left(T_x (G\acts x)\right)$. Since $B$ is an
$H$-stable linear subspace of $L^\perp \otimes L^{-1}$, there exists an $H$-stable linear subspace
$N$ of $L^\perp \otimes L^{-1}$ such that $L^\perp\otimes L^{-1} = B \oplus N$. Then,
$\overline{\psi^{-1}(N)} \subset \P(V)$ is an $H$-invariant linear subspace of $\P(V)$, i.e., there exists an $H$-invariant linear subspace $W$ in $V$ such that $\overline{\psi^{-1}(N)} = \P(W)$. We
have $T_x(G\acts x) \oplus T_x(\P(W)) = T_x (\P(V))$,
by construction.

It follows that the map $\widetilde \varphi : G \times_H \P(W) \to \P(V), [g,y] \mapsto g\acts y$ is locally biholomorphic at $[e,x]$. Set $S \definiere X \cap \P(W)$. Since $\widetilde \varphi ^{-1} (X) =
G \times_H S$ by construction, we see that
$\varphi: G \times_H S \to X,\;
          [g,y]     \mapsto g\acts y$
is locally biholomorphic at $[e,x]$. Intersecting $S$ with a $G_x$-invariant affine open
neighbourhood of $x$ in $X$, we may assume that $S$ is affine. Furthermore, the restriction of $\varphi$ to $G\acts [e,x]$ is biholomorphic onto $G\acts x$. It follows from the
holomorphic version of Luna's fundamental lemma (see Lemma 14.3 and Remark 14.4 in
\cite{CartanDecomposition}; also see Lemma \ref{fundamental} of this note) that there exists an open $G$-stable
neighbourhood $U'$
of $[e,x]$ which is mapped biholomorphically onto the open neighbourhood $\varphi(U')$
of $G\acts x$ in $X$ by $\varphi$.

Since $G\times_H S$ is affine, there exists an algebraic Hilbert quotient
$\pi_S: G \times_H S \rightarrow (G\times_H S)\hq G$. By shrinking $U'$ if necessary, we can assume
that it is $\pi_S$-saturated. Since $X(\mu)$ is open in $X$, we may assume that $\varphi(U')
\subset X(\mu)$. Let $C\definiere X(\mu) \setminus \varphi(U')$. This is a closed $G$-invariant
subset of $X(\mu)$ not containing the closed orbit $G\acts x \subset X(\mu)$. Hence, $\pi(x) \notin
\pi (C)$. Replace $U'$ by $\widetilde U \definiere \varphi^{-1}(\pi^{-1}(X(\mu)\hq G \setminus
\pi(C)))\subset U'$. The set $\widetilde{U}$ is $\pi_S$-saturated. It follows that $\varphi(\widetilde{U})$ is a $\pi$-saturated open
neighbourhood of $x$ in $X(\mu)$ which is $G$-equivariantly biholomorphic to a saturated open subset
of an affine $G$-variety.
\end{proof}
\subsection{Singularities of momentum map quotients}
We are now in the position to prove
\begin{thm}\label{momentumquotientsrational}
Let $K$ be a connected compact Lie group and $G= K^\C$ its complexification. Let $X$ be an
irreducible algebraic Hamiltonian $G$-variety with momentum map $\mu: X \to \mathfrak{k}^*$. Assume
that $X$ has $\mathit{1}$-rational singularities. Then, the analytic Hilbert quotient $X(\mu)\hq G$ has
$\mathit{1}$-rational singularities.
\end{thm}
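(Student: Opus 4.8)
The plan is to check the $1$-rational singularity condition locally on $Q \definiere X(\mu)\hq G$ and to reduce it, via the algebraic Slice Theorem, to the corresponding statement for algebraic Hilbert quotients, where Theorem~\ref{rationalsingularities} applies. The property is local on the target: for a resolution $f\colon \widetilde X \to X$ the sheaf $R^1 f_*\mathscr{O}_{\widetilde X}$ is coherent, so its support is closed, and hence the locus where a normal variety fails to have $1$-rational singularities is closed. It therefore suffices to exhibit, for each $q \in Q$, a neighbourhood of $q$ in $Q$ with $1$-rational singularities.

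First I would fix $q \in Q$ and, using the homeomorphism $\mu^{-1}(0)/K \simeq X(\mu)\hq G$ of Theorem~\ref{propertiesmomentumquotients}, choose $x \in \mu^{-1}(0)$ with $\pi(x) = q$; its stabiliser $H \definiere G_x$ is reductive. By Theorem~\ref{algebraicslice} the point $x$ has a $\pi$-saturated open neighbourhood $U$ in $X(\mu)$ that is $G$-equivariantly biholomorphic to a saturated open subset $\widetilde U$ of an affine $G$-variety. From the construction in the proof of the Slice Theorem this affine variety is $Y \definiere G\times_H S$, where $S = X \cap \P(W)$ is an affine $H$-invariant slice and $\varphi\colon G\times_H S \to X$ is a $G$-equivariant local biholomorphism along the orbit $G\acts[e,x]$. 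Passing to analytic Hilbert quotients, $U\hq G$ is biholomorphic to $\widetilde U\hq G$, which is an open subset of $Y\hq G$; thus a neighbourhood of $q$ in $Q$ is realised as an open subset of the algebraic Hilbert quotient $Y\hq G$.

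The principal obstacle is to arrange that the affine model has $1$-rational singularities, so that Theorem~\ref{rationalsingularities} becomes applicable. Since $\varphi$ is a local biholomorphism near the orbit and $X$ has $1$-rational singularities, $Y = G\times_H S$ has $1$-rational singularities in a neighbourhood of $G\acts[e,x]$. Because $H$ is reductive, $G/H$ is smooth and $Y \to G/H$ is a locally trivial fibre bundle with fibre $S$; as $1$-rational singularities are preserved under smooth morphisms, the singularities of $Y$ are locally over $G/H$ those of $S$ with a smooth factor, so $S$ has $1$-rational singularities near $x$. Using that the non-$1$-rational locus of $S$ is closed and $H$-invariant, together with Lemma~\ref{invariantaffine} applied to the $H$-action on the normal variety $S$ at the fixed point $x$, I would shrink $S$ to an $H$-invariant affine open subvariety $S'$ containing $x$ on which $1$-rational singularities hold everywhere. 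Then $Y' \definiere G\times_H S'$ is an affine $G$-variety with $1$-rational singularities.

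Finally, I would invoke Theorem~\ref{rationalsingularities} for $Y'$ to conclude that $Y'\hq G$ has $1$-rational singularities. After shrinking the saturated neighbourhood so that $\widetilde U \subset Y'$, the neighbourhood $U\hq G \cong \widetilde U\hq G$ of $q$ is an open subset of $Y'\hq G$ and hence has $1$-rational singularities at $q$. Since $q \in Q$ was arbitrary, $Q = X(\mu)\hq G$ has $1$-rational singularities. I expect the technical heart to be the transfer of the $1$-rational condition from $X$ across the local biholomorphism $\varphi$ to the affine slice $S$, and the verification that $G\times_H S'$ inherits it, since this is where the fibre-bundle structure over $G/H$ and the behaviour of $1$-rational singularities under smooth morphisms must be combined carefully.
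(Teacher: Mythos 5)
Your proposal is correct in outline and shares the paper's skeleton: localise at a point $x \in \mu^{-1}(0)$, use the algebraic Slice Theorem (Theorem~\ref{algebraicslice}) to identify a $\pi$-saturated neighbourhood with a saturated open subset $\widetilde U$ of the affine $G$-variety $A = G\times_H S$, and then apply Theorem~\ref{rationalsingularities} to a suitable affine model with $1$-rational singularities. The middle step, however, is genuinely different. The paper never descends to the slice $S$: it takes a $G$-equivariant resolution $f\colon \widehat A \to A$, notes that $B \definiere \supp(R^1f_*\mathscr{O}_{\widehat A})$ is a closed $G$-invariant algebraic subset of $A$ disjoint from $\widetilde U$ (since $1$-rationality is a local biholomorphic invariant and $\widetilde U$ is biholomorphic to an open subset of $X$), and then uses the separation property of the algebraic Hilbert quotient $\pi_A$ --- the closed orbit through $\varphi(x)$ and $B$ have disjoint images --- to find an affine $\pi_A$-saturated neighbourhood $\widetilde A$ of $\varphi(x)$ inside $A\setminus B$; Theorem~\ref{rationalsingularities} applied to $\widetilde A$ then finishes. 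Your route instead transports $1$-rationality from $X$ down to $S$ and back up to $G\times_H S'$, which requires knowing that $1$-rational singularities are preserved and detected along smooth morphisms (equivalently, that the condition can be checked fibrewise on an analytically or \'etale-locally trivial bundle over the smooth base $G/H$). That fact is true --- it follows from flat base change for $R^1$ of a pulled-back resolution, in the spirit of the K\"unneth argument the paper uses in Theorem~\ref{shifting} --- but it is established nowhere in the paper, so your argument carries an extra lemma that the paper's excision trick renders unnecessary. Two smaller repairs: Lemma~\ref{invariantaffine} is stated for connected groups and $H = G_x$ need not be connected, but you do not actually need it, since $S$ is already affine and $x$ is an $H$-fixed point, so separation of the images of $\{x\}$ and of the closed $H$-invariant non-$1$-rational locus under $S \to S\hq H$ produces $S'$ directly; and the final shrinking should be taken saturated with respect to $\pi_Y$ rather than $\pi_{Y'}$ --- choose, again by separation, a $\pi_Y$-saturated affine neighbourhood of the closed orbit inside $\widetilde U \cap Y'$ --- so that its quotient is manifestly an open neighbourhood of $q$ in $X(\mu)\hq G$.
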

\begin{rem}
Theorem \ref{momentumquotientsrational} follows from Boutot's result \cite{Boutot}, if we additionally assume that $X$ is smooth. To prove it in full generality, we use the analysis done in the preceding sections.
\end{rem}
\vspace{-3mm}
\begin{proof}[Proof of Theorem \ref{momentumquotientsrational}]
Let $x \in \mu^{-1}(0)$. By the Slice Theorem, Theorem \ref{algebraicslice}, there exists an open
saturated neigh\-bour\-hood $U$ of $x$ in $X(\mu)$ and a
$G$-equivariant biholomorphic map $\varphi: U \to \widetilde{U}$ to a saturated
open
subset $\widetilde U$ of an affine $G$-variety $A$. The
algebraic Hilbert quotient of $A$ will be denoted by $\pi_A: A \to A\hq G$. Let $f:
\widehat A \rightarrow A$ be a $G$-equivariant resolution of $A$ and set $B \definiere \supp(R^1f_*
\mathscr{O}_{\widehat A})$. This is a $G$-invariant algebraic subvariety of $A$, and we have $\widetilde{U} \subset A \setminus B$. Since the $G$-orbit through $\varphi (x)$ is closed in $A$, we have $\pi(\varphi(x)) \notin \pi(B)$. Therefore, there exists an affine $\pi_A$-saturated open neighbourhood $\widetilde{A}$ of
$\varphi (x)$ in $A \setminus B$ such that $\widetilde{U} \subset  \widetilde{A}$.

Let $y \in X(\mu)\hq G$. Applying the considerations above to the unique closed orbit $G\acts x$ in $\pi^{-1}(y)$, there exists a neighbourhood $W$ of $y
\in X(\mu)\hq G$ such
that $\pi^{-1}(W)$ is $G$-equivariantly biholomorphic to a saturated
subset $\widetilde{U} \subset A$ in an affine $G$-variety $A$ with
$1$-rational singularities. Consequently, $W$ is biholomorphic to
$(\widetilde{U}\hq G)^h \subset (A\hq G)^h$, which has $1$-rational singularities by Theorem
\ref{rationalsingularities}. This concludes the proof.
\end{proof}
\section{Projectivity of momentum map quotients: Proof of Theorem 1}\label{section:proofoftheorem1}
In the proof of Theorem~\ref{1} given in this section, we use the following projectivity criterion to conclude that momentum map quotients of algebraic Hamiltonian $G$-varieties are projective:
\begin{thm}[\cite{ProjectivityofMoishezon}]\label{Namikawa}
Let $Z$ be a Moishezon space with $\mathit{1}$-rational singularities. If $Z$ admits a
smooth K\"ahler structure, then it is a projective algebraic
variety.
\end{thm}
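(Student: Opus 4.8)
The plan is to reduce projectivity of $Z$ to the construction of a positive line bundle and then to manufacture one from the Kähler class. By Grauert's form of the Kodaira embedding theorem for compact complex spaces, it suffices to produce a holomorphic line bundle on $Z$ whose first Chern class is represented by a Kähler form; GAGA then promotes the resulting projective embedding to an algebraic structure on $Z$. Since the first Chern class of a line bundle is a rational class of type $(1,1)$, the whole problem becomes the following: \emph{exhibit a rational Kähler class on $Z$}, i.e.\ show that the Kähler cone $\mathcal K\subset H^{1,1}(Z)_{\R}$ meets the Néron--Severi space $N^1(Z)_{\R}$, the real span of $c_1(\mathrm{Pic}(Z))$.

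First I would fix a resolution $f\colon \tilde Z\to Z$. As $Z$ is normal and Moishezon, $\tilde Z$ is a smooth Moishezon manifold, and after finitely many further blow-ups I may assume $\tilde Z$ is \emph{projective}. I would stress that this does not by itself settle the question: projectivity does not descend along modifications --- Hironaka's non-projective Moishezon threefolds dominate $\P_3$ --- so the Kähler hypothesis on $Z$ must be used in an essential way. The pullback $f^*\omega$ of the Kähler form is a smooth closed semipositive $(1,1)$-form on $\tilde Z$, strictly positive off the exceptional locus; hence $[f^*\omega]$ is a \emph{nef and big} class that pairs to zero with exactly the $f$-contracted curves.

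The $1$-rational hypothesis $R^1f_*\mathscr H_{\tilde Z}=0$ enters twice. Combined with $f_*\mathscr H_{\tilde Z}=\mathscr H_Z$ (normality), the Leray spectral sequence gives an isomorphism on $H^1$ and an injection $H^2(Z,\mathscr H_Z)\hookrightarrow H^2(\tilde Z,\mathscr H_{\tilde Z})=H^{0,2}(\tilde Z)$. This equips $H^2(Z)$ with a pure Hodge structure compatible with $f^*$, so that ``type $(1,1)$'' and the Kähler cone are meaningful on $Z$ and are detected on $\tilde Z$. Moreover, $R^1f_*\mathscr H_{\tilde Z}=0$ is precisely the condition that lets me \emph{descend an $f$-trivial line bundle}: a line bundle on $\tilde Z$ that is trivial along the fibres of $f$ is the pullback of a line bundle on $Z$. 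These are the two transfer tools between $\tilde Z$ and $Z$.

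The heart of the proof --- and the step I expect to be the \textbf{main obstacle} --- is to show $\mathcal K\cap N^1(Z)_{\R}\neq\emptyset$, i.e.\ that the Kähler cone of $Z$ contains an algebraic class. Once this is known the argument closes cleanly: $N^1(Z)_{\R}$ is defined over $\Q$ and spanned by rational classes, while $\mathcal K\cap N^1(Z)_{\R}$ is open in $N^1(Z)_{\R}$; hence it contains a rational point $[\omega']$, a multiple of which is $c_1$ of a positive line bundle, so $Z$ is projective. The nonemptiness is exactly where the Moishezon hypothesis is indispensable --- for a non-Moishezon Kähler $K3$ surface it genuinely fails. To establish it I would use that $Z$ Moishezon supplies a big line bundle, so $N^1(Z)_{\R}$ already contains a big class $\lambda$; transporting $\lambda$ together with the nef--big class $[f^*\omega]$ to the projective resolution and applying Kodaira's lemma (big $=$ ample $+$ effective) in tandem with the contraction structure of $f$, I would produce on $\tilde Z$ an integral, $f$-trivial class of type $(1,1)$ --- hence $c_1$ of a line bundle by the Lefschetz $(1,1)$-theorem on the smooth projective $\tilde Z$ --- whose descent to $Z$ lies in $\mathcal K$; the $1$-rational descent tool then moves the line bundle itself down to $Z$. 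Keeping positivity under control throughout this transport --- essentially carrying Moishezon's projectivity mechanism across the resolution --- is the delicate point, and it is exactly what the cohomological vanishing coming from the $1$-rationality of the singularities renders tractable.
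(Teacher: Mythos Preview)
The paper contains no proof of this statement. Theorem~\ref{Namikawa} is quoted from Namikawa \cite{ProjectivityofMoishezon} and invoked as a black box in Section~\ref{section:proofoftheorem1}; the present paper neither proves it nor sketches an argument for it. There is therefore nothing in the paper to compare your proposal against.

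As a standalone outline your sketch points in the right direction and correctly isolates the two places where the $1$-rational hypothesis is used (the injection $H^2(Z,\mathscr H_Z)\hookrightarrow H^2(\tilde Z,\mathscr H_{\tilde Z})$ via Leray, and descent of line bundles through the exponential sequence). You are also right that the smooth case---K\"ahler plus Moishezon implies projective---is classical, and that the content of Namikawa's result is the transfer across a resolution. However, the paragraph you yourself flag as the main obstacle is not an argument: saying you will ``produce on $\tilde Z$ an integral, $f$-trivial class of type $(1,1)$ whose descent to $Z$ lies in $\mathcal K$'' by ``Kodaira's lemma in tandem with the contraction structure of $f$'' simply restates the goal. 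If you want to turn this into a proof, consult Namikawa's paper directly rather than the present one.
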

\subsection{Proof of Theorem~\ref{1}}
First, let us additionally assume that $G$ is connected. It follows that $X$ is irreducible. The analytic Hilbert quotient $X(\mu)\hq G$ carries a continuous
K\"ahler structure by Theorem \ref{propertiesmomentumquotients}. Using a theorem of
Varouchas (\cite{Varouchas2}, Theorem 1) we conclude that there exists a smooth K\"ahler structure on $X(\mu)\hq G$.
Owing to Theorem \ref{momentumquotientsMoishezon} and Theorem
\ref{momentumquotientsrational}, $X(\mu)\hq G$ is a Moishezon space with $1$-rational
singularities. Consequently, Theorem \ref{Namikawa}
applies, and $X(\mu)\hq G$ is projective algebraic.

In the general case, let $K_0$ denote the connected component of the identity in $K$. Then, the complexification $(K_0)^\C$ of $K_0$ is equal to the
connected component of the identity in $G$, which we denote by $G_0$. Since the momentum map $\mu$ depends only on the infinitesimal action of $K$, it is also a momentum map for the $K_0$-action, which we will call
$\mu_{K_0}$. We have $\mu^{-1}_{}(0) = \mu^{-1}_{K_0}(0)$ and $X(\mu_{K_0}) = \mathcal{S}_{G_0}(\mu_{K_0}^{-1}(0))  \subset \mathcal{S}_G(\mu^{-1}(0))=X(\mu)$. Conversely, it follows from the Exhaustion Theorem of \cite{Potentialsandconvexity}, that $X(\mu) \subset
X(\mu_{K_0})$. Hence, we have $X(\mu) = X(\mu_{K_0})$.

Since $X$ is assumed to have $1$-rational singularities, it is normal, and consequently, its irreducible components are disjoint and $G_0$-stable. If $X
= \bigcup_j X_j$ is the decomposition of $X$ into irreducible components, then the decomposition of $X(\mu)$ into irreducible components is given by $X(\mu) = \bigcup_j X_j(\mu_{K_0}|_{X_j})$.
Setting $X(\mu)_j \definiere X_j(\mu_{K_0}|_{X_j})$, we see that the restriction of the analytic Hilbert quotient $\pi_{G_0}: X(\mu) \to X(\mu)\hq G_0$ to $X(\mu)_j$ is the analytic Hilbert
quotient for the action of $G_0$ on $X(\mu)_j$ and that $X(\mu)\hq G_0 =\bigcup_j \pi_{G_0}
(X(\mu)_j) = \bigcup_j X(\mu)_j\hq G_0$ is the decomposition of $X(\mu)\hq G_0$ into
irreducible components. The finite group $\Gamma \definiere G/G_0$ of
connected components of $G$ acts on $X(\mu)\hq G_0$, and we have the
following commutative diagram
\enlargethispage{2mm}
\[\begin{xymatrix}{
X(\mu)\ar[r]^\pi\ar[d]_{\pi_{G_0}} & X(\mu)\hq G \\
X(\mu)\hq G_0\ar[ru]_{\pi_\Gamma},
}
\end{xymatrix}
\]
where $\pi_\Gamma$ is the analytic Hilbert quotient for the
action of $\Gamma$ on $X(\mu)\hq G_0$.

Since $X(\mu)\hq G$ is compact by assumption, $X(\mu)\hq G_0$ is also
compact. Let $V$ be any of the irreducible
components of $X(\mu)\hq G_0$. Then, $V = X_j(\mu_{K_0}|_{X_j})$ for some $j$, as we have seen above. By the first part of the proof (the case $G$ connected), it follows that $V$ is projective algebraic. Since the irreducible components of $X(\mu)\hq G_0$ are disjoint, the projectivity of $X(\mu)\hq G_0$ follows.

As $X(\mu)\hq G_0$ is projective, the finite group $\Gamma$ acts algebraically on
$X(\mu)\hq G_0$.  It is a classical result that the algebraic Hilbert quotient $Y$ for the action of the finite group $\Gamma$ on the projective algebraic variety $X(\mu)\hq G_0$ exists as a projective algebraic
variety. Since $Y^h$ is the analytic Hilbert quotient for the induced $\Gamma$-action on $(X(\mu)\hq G_0)^h$, it is
biholomorphic to $X(\mu)\hq G$. Hence, $X(\mu)\hq G$ is a projective
algebraic complex space and the claim is shown.
\qed
\begin{rems}
a) Applying Theorem \ref{rationalsingularities} to $\pi_\Gamma: X(\mu)\hq G_0 \to X(\mu)\hq G$, we see that $X(\mu)\hq G$ also has $1$-rational singularities.

b) The assumption that $\mu^{-1}(0)$ is compact is necessary to obtain an algebraic structure on the analytic Hilbert quotient $X(\mu)\hq G$, cf. Example \ref{noncompactcounterexample}.

c) The assumption that $X$ is $G$-irreducible is not necessary. In general, apply Theorem~\ref{1} to the $G$-irreducible components of $X$ to conclude that every irreducible component of $X(\mu)\hq G$ is projective. Since these components are disjoint, the projectivity of $X(\mu)\hq G$ follows.
\end{rems}
\begin{ex}\label{noncompactcounterexample}
Consider the affine algebraic variety $X \definiere \C^* \times \C$ endowed with the $\C^*$-action on the first factor and with the K\"ahler structure $\omega = 2i \partial \bar \partial \rho$, where $\rho(t,z) = |t|^2|e^z-1|^2 + |t^{-1}|^2$ is smooth, $S^1$-invariant, and strictly plurisubharmonic. After identifying $\Lie (S^1)^*$ with $\R$, the associated momentum map for the $S^1$-action on $X$ is
\[\mu(t,z) = |t|^2|e^z-1|^2 - |t^{-1}|^2.\]
Note that $X(\mu) = X \setminus (\C^* \times 2\pi i\Z)$ is not algebraically Zariski-open in $X$. We obtain $X(\mu)\hq \C^* = X(\mu)/\C^* = \C \setminus 2\pi i \Z =: Q$. We claim that $Q$ is not the complex manifold associated to any algebraic variety. Aiming for a contradiction, we suppose that $Q = C^h$ for some smooth algebraic variety $C$. Then, $C$ is a non-complete algebraic curve. It follows that $C$ is affine (see \cite{HartshorneAmple}, Section II.4), and that there exist a regular open embedding $\varphi: C \hookrightarrow \overline C$ of $C$ into a smooth projective curve $\overline C$. Hence, there exist finitely many points $c_1, \dots, c_m$ in $\overline C$ such that $C= \overline C \setminus \{c_1, \dots, c_m\}$. If $\overline C$ has genus $g$, it follows that the fundamental group $\pi_1(Q)$ is freely generated by $2g + m -1$ generators, a contradiction.
\end{ex}
\begin{rem}
Theorem \ref{1} applies in particular if the momentum map $\mu$ under consideration is proper. Note however that the properness assumption on $\mu$ is rather strong, as the following result (see \cite{Doktorarbeit}, Section 2.6) indicates:
\begin{prop}
Let $X$ be a holomorphic $\C^*$-principal bundle over a compact complex
ma\-ni\-fold $B$. Then there exists a K\"ahler structure $\omega$ on $X$ such that
the induced $S^1$-action on $X$ is Hamiltonian with proper momentum map if and
only if $B$ is K\"ahler and $X$ is topologically trivial.
\end{prop}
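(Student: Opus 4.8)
The plan is to translate the statement into the language of line bundles and then handle the two implications separately, by an explicit construction and by Kähler reduction. A holomorphic $\C^*$-principal bundle $X \to B$ is the same datum as $X = L\setminus\{\text{zero section}\}$ for a holomorphic line bundle $L \to B$ (the associated bundle for the standard representation of $\C^*$ on $\C$), with the $\C^*$-action given by fibrewise scalar multiplication and the induced $S^1$-action by fibrewise rotation. Since $X$ is a principal bundle, $\C^*$, and hence $S^1$, acts freely and properly on $X$, and topological triviality of $X$ is equivalent to the vanishing of $c_1(L)$.

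For the implication ``$\Leftarrow$'' I would construct $\omega$ by hand. Using that $B$ is compact Kähler and $c_1(L)=0$, Hodge theory (the $\partial\bar\partial$-lemma) yields a Hermitian metric $h$ on $L$ with vanishing Chern curvature. Writing $t \definiere \log\|{\cdot}\|_h : X \to \R$ for the $S^1$-invariant logarithmic fibre norm and choosing a smooth strictly convex $\phi$ with $\phi'$ a homeomorphism of $\R$ (for instance $\phi = \cosh$), I would set
\[\omega \definiere \pi^*\omega_B + i\partial\bar\partial\,\phi(t).\]
Flatness of $h$ makes $t$ locally pluriharmonic, so the second summand equals $i\phi''(t)\,\partial t \wedge \bar\partial t$; splitting a tangent vector into its vertical and horizontal parts then shows $\omega>0$, i.e.\ that $\omega$ is Kähler. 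As the $S^1$-action is vertical we have $\iota_{\xi_X}\pi^*\omega_B = 0$, and a fibrewise computation produces a global momentum map $\mu = \pm\tfrac12\phi'(t)$. Since $\phi'$ is a proper homeomorphism of $\R$ and $B$ is compact, $\mu^{-1}([a,b])$ is a compact annular region $\{r_1 \le \|{\cdot}\|_h \le r_2\}$, so $\mu$ is proper.

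For the implication ``$\Rightarrow$'', suppose such $\omega$ and $\mu$ exist. Freeness of the $S^1$-action gives $\xi_X \neq 0$ everywhere, so $d\mu = \iota_{\xi_X}\omega$ never vanishes and $\mu$ is a submersion with every value regular. Restricting $\mu$ to a fibre $X_b \cong \C^*$, an $S^1$-stable complex curve, and using $d(\mu|_{X_b}) = \iota_{\xi_X}(\omega|_{X_b})$ with $\omega|_{X_b}$ a positive area form, shows that $\mu$ is strictly monotone in $t$ along each fibre; properness then forces it to be a homeomorphism $\R \to \R$ there. Consequently each level set $\mu^{-1}(c)$ meets every $\C^*$-orbit in exactly one $S^1$-orbit, so $\mu^{-1}(c) \to B$ is an isomorphism of $S^1$-bundles onto the bundle associated with $L$, and the Kähler reduction $\mu^{-1}(c)/S^1$ is biholomorphic to $X/\C^* = B$. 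This equips $B$ with a Kähler structure, giving the first half of the conclusion. For triviality I would invoke the Duistermaat--Heckman theorem: under these identifications the reduced Kähler classes vary linearly, $[\omega_c] = [\omega_0] + c\cdot c_1(L)$ in $H^2(B;\R)$. As every $c \in \R$ is a regular value with free action, each $[\omega_c]$ lies in the open Kähler cone; since that cone is salient, it cannot contain an affine line with nonzero direction, and hence $c_1(L)=0$.

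The main obstacle is the second implication. Two points need care: first, upgrading the diffeomorphism $\mu^{-1}(c)/S^1 \cong B$ to a biholomorphism, which rests on the monotonicity-along-fibres step together with the fact that all $\C^*$-orbits are closed; and second, the combination of Duistermaat--Heckman with the salience of the Kähler cone, which is what actually pins down the Chern class. I expect the fibrewise monotonicity computation, namely $\mu'(t) = \omega|_{X_b}(\xi_X,\cdot) > 0$, to be the technical heart, since it simultaneously yields the bundle identification and drives the linear variation of $[\omega_c]$. I would also flag that Duistermaat--Heckman controls only the \emph{real} Chern class, so that ruling out a torsion $c_1(L)$ — and thereby obtaining genuine integral topological triviality — is a subtle point requiring separate attention.
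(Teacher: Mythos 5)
The paper itself contains no proof of this proposition: it is quoted inside a remark from the author's dissertation (\cite{Doktorarbeit}, Section 2.6), so your proposal can only be measured against the statement, not against an in-paper argument. That said, your strategy is the natural one and both implications are essentially carried out. For ``$\Leftarrow$'', passing to a flat Hermitian metric (which exists on a compact K\"ahler base as soon as the \emph{real} first Chern class vanishes, by the $\partial\bar\partial$-lemma) and taking the potential $\phi(t)$ with $\phi=\cosh$ gives the $S^1$-invariant K\"ahler form $\pi^*\omega_B + i\phi''(t)\,\partial t\wedge\bar\partial t$ with momentum map a constant multiple of $\phi'(t)$, proper because $\phi'$ is proper and $B$ is compact; this is correct, and correctly avoids the trap of pretending that topological triviality gives a holomorphic trivialisation. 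For ``$\Rightarrow$'', your chain (freeness, so every value is regular; strict monotonicity of $\mu$ in $t$ along fibres; properness of $\mu|_{X_b}$ --- note this step silently uses that fibres are closed in $X$, so that properness of $\mu$ restricts to them; hence each level set meets each fibre in exactly one $S^1$-orbit; the reduction $\mu^{-1}(c)/S^1 \cong X\hq\C^* = B$ is K\"ahler; Duistermaat--Heckman combined with salience of the K\"ahler cone) is sound and yields that $B$ is K\"ahler and $c_1(L)_\R = 0$.

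The genuine issue is exactly the one you flag at the end, but it is not a ``subtle point requiring separate attention'' that a finer argument could settle: it is unfixable, because your own ``$\Leftarrow$'' construction uses only $c_1(L)_\R=0$, never integral triviality. Concretely, let $B$ be an Enriques surface and $L=K_B$ its canonical bundle: $c_1(K_B)$ is a nonzero $2$-torsion class in $H^2(B;\Z)$, so $X=K_B\setminus\{\text{zero section}\}$ is \emph{not} topologically trivial; yet $K_B$ is unitarily flat (it is induced by the nontrivial character $\pi_1(B)=\Z/2\to U(1)$), so your construction equips $X$ with an $S^1$-invariant K\"ahler structure admitting a proper momentum map. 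Thus the two implications you prove meet exactly at the condition ``$B$ K\"ahler and $c_1(L)_\R=0$'' (equivalently, $c_1(L)$ torsion; equivalently, $X$ is flat as a principal bundle), and that is the sharpest equivalence that can hold. The proposition as printed is accurate only if ``topologically trivial'' is read as vanishing of the \emph{real} first Chern class, or under an additional hypothesis that forces $H^2(B;\Z)$ to be torsion-free (for instance $B$ simply connected). So rather than trying to close the torsion gap, you should record that your argument proves the real-coefficient version of the statement and exhibits the torsion case as a genuine counterexample to the literal integral formulation.
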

\end{rem}
\section{Chow quotients and sets of semistable points}\label{section:Chow}
If $X$ is a Hamiltonian $G$-space for the complex reductive group
$G=K^\C$ with momentum map $\mu:~X \to \mathfrak{k}^*$, we have seen
in Section \ref{intromomentummapquotients} and used it throughout our study that $X(\mu)$ is an
open subset of $X$. From the point of view of complex analytic
geometry, it is natural to ask if the
set of \emph{unstable points} $X \setminus X(\mu)$ is an analytic
subset of $X$. The following example (see \cite{ReductionOfHamiltonianSpaces}) shows that this is
not true in general.
\begin{ex}\label{counterexampledense}
Let $X= \C^2 \setminus \{(x,y)\in \C^2 \mid y = 0, \Re\mathfrak{e} (x)\leq 0 \}$
with the standard K\"ahler form. Let $G = \C^*$ act on $X$ by
multiplication in the second variable. The $S^1$-action is Hamiltonian
with momentum map $\mu: X \to \Lie(S^1)^* \cong \R$, $(x,y) \mapsto |y|^2$. Hence, $\mu^{-1}(0) = \{(x,y) \in X \mid y = 0\}$, and the set of semistable points
$X(\mu) = \{(x,y) \in X \mid \Re\mathfrak{e} (x) > 0 \}$ is not analytically Zariski-open in $X$.
\end{ex}
In the remainder of this section we will prove Theorem~\ref{unstablepointsarealgebraic} which asserts that the set of semistable points of an algebraic Hamiltonian $G$-variety with compact quotient is algebraically Zariski-open.
\subsection{The Chow quotient of a projective algebraic $G$-variety}\label{Chowquotient}
Let $X$ be an irreducible projective algebraic $G$-variety for a connected complex reductive Lie
group $G$. For
$m, d \in \N$, let $\mathscr{C}_{m,d}(X)$ be the Chow variety of cycles of dimension $m$ and degree
$d$ in $X$. This is a projective algebraic component of the Barlet cycle space (see
\cite{BarletCycles}). For a
cycle $C \in \mathscr{C}_{m,d}(X)$ let $|C|$ denote the support of $C$. The following result provides a crucial tool for our study:
\begin{thm}[\cite{KapranovChow}]\label{Chow}
Let $X$ be an irreducible projective algebraic $G$-variety. Then, there exist natural numbers $m,d
\in \N$, a $G$-invariant rational map $\varphi: X \dasharrow \mathscr{C}_{m,d}(X)$, and a non-empty
$G$-invariant Zariski-open subset $U_0 \subset \dom(\varphi)$ such that for all $u \in U_0$, we have $|\varphi (u)| = \overline{G\acts u}$.
\end{thm}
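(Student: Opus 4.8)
The plan is to construct the rational map $\varphi$ by sending a generic point $u \in X$ to the cycle determined by its orbit closure $\overline{G\acts u}$, and to do this uniformly over a single Chow component. First I would observe that for a generic $u$ the orbit $G\acts u$ has maximal dimension $m := \max_{x \in X}\{\dim G\acts x\}$; on the $G$-invariant Zariski-open set $X_{\mathrm{reg}}$ where this maximum is attained, each orbit closure $\overline{G\acts u}$ is an irreducible $m$-dimensional subvariety of the fixed ambient projective variety $X$, and hence defines a cycle $[\overline{G\acts u}] \in \mathscr{C}_{m,d(u)}(X)$ of some degree $d(u)$. The key point here is that the degree $d(u)$ is \emph{locally constant} in $u$ (in fact generically constant), because the orbit closures form an algebraic family and degree is a discrete invariant that is constant on irreducible families of cycles; so on a suitable non-empty $G$-invariant Zariski-open $U_0 \subset X_{\mathrm{reg}}$ the degree takes a single value $d$, and all the relevant cycles live in the one fixed component $\mathscr{C}_{m,d}(X)$.

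Next I would make the assignment $u \mapsto [\overline{G\acts u}]$ into an honest rational map $\varphi: X \dasharrow \mathscr{C}_{m,d}(X)$. The natural way to do this is to consider the incidence subvariety $I \definiere \overline{\{(u, x) \in X \times X \mid x \in \overline{G\acts u},\ u \in U_0\}}$, projecting to the first factor $X$; over $U_0$ the fibres are exactly the orbit closures, which form a flat family of $m$-cycles of degree $d$, so the universal property of the Chow variety (or equivalently the Barlet cycle space) yields a morphism $U_0 \to \mathscr{C}_{m,d}(X)$. This morphism extends to a rational map on all of $X$, and by construction $|\varphi(u)| = \overline{G\acts u}$ for every $u \in U_0$, giving the required property on the open set.

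Finally I would verify $G$-invariance. Since $\overline{G\acts (g\acts u)} = \overline{G\acts u}$ for all $g \in G$, the cycle $\varphi(u)$ is literally unchanged under the $G$-action on the source, so $\varphi \circ \alpha_g = \varphi$ as rational maps; shrinking $U_0$ if necessary to a $G$-invariant open set (replace it by $G\acts U_0$, or by the largest $G$-invariant open subset of $\dom(\varphi)$ on which the fibre identity holds) makes both $\varphi$ and $U_0$ genuinely $G$-invariant. The numbers $m$ and $d$ are then the asserted natural numbers.

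The main obstacle I expect is the \emph{flatness/family} step: turning the set-theoretic assignment $u \mapsto \overline{G\acts u}$ into a morphism to a single Chow component requires knowing that the orbit closures fit together into an algebraic family of cycles of constant dimension $m$ and constant degree $d$ over a non-empty open set. Constancy of dimension is handled by restricting to $X_{\mathrm{reg}}$, but constancy of degree and the existence of the classifying map to $\mathscr{C}_{m,d}(X)$ are the substantive inputs, relying on the representability of the Chow functor and on generic flatness applied to the projection $I \to X$. This is precisely the content attributed to Kapranov's construction \cite{KapranovChow}, so I would cite that framework rather than reprove it from scratch.
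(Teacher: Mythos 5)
The paper does not actually prove this statement: it is imported directly from Kapranov's work on Chow quotients \cite{KapranovChow}, with the citation standing in for the proof. Your sketch is a correct outline of the construction behind that citation --- generic orbit dimension, the incidence family of orbit closures, generic flatness and constancy of degree, the classifying map to the Chow component, and $G$-invariance --- and since you, like the paper, ultimately defer the substantive inputs (representability of the Chow functor, flatness of the family) to Kapranov, your approach and the paper's coincide.
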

We will denote the image of $\varphi$ in $\mathscr{C}_{m,d}(X)$ by $\mathscr{C}_G(X)$, and we will
call $\varphi: X \dasharrow \mathscr{C}_G(X)$ the \emph{Chow quotient of $X$ by $G$.} For the following discussion recall that for a subset $A$ of a $G$-space $X$, we have defined the saturation $\mathcal{S}_G(A)$ to be $\{ x \in X \mid \overline{G \acts x} \cap A \neq \emptyset\}$.
\begin{lemma}\label{saturationconstructible}
Let $A$ be a $G$-invariant irreducible locally Zariski-closed subset of $X$. Then, $\mathcal{S}_G(A)
\cap U_0$ is constructible.
\end{lemma}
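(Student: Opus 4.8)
The plan is to realise $\mathcal{S}_G(A) \cap U_0$ as the preimage, under the morphism $\varphi|_{U_0}$, of a constructible subset of the Chow variety. The point of Theorem~\ref{Chow} is that on $U_0$ the value $\varphi(u)$ records exactly the orbit closure $\overline{G \acts u}$; hence the \emph{analytic} condition ``$\overline{G \acts u}$ meets $A$'' defining $\mathcal{S}_G(A)$ is converted into the \emph{algebraic} condition ``the cycle $\varphi(u)$ meets $A$'', and the latter can be analysed by standard algebraic geometry on $\mathscr{C}_{m,d}(X)$.

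First I would invoke the universal family over the Chow variety: the incidence set
\[
\mathcal{I} \definiere \{(C,x) \in \mathscr{C}_{m,d}(X) \times X \mid x \in |C|\}
\]
is a Zariski-closed algebraic subset of $\mathscr{C}_{m,d}(X) \times X$, namely the support of the universal cycle. Let $p \colon \mathcal{I} \to \mathscr{C}_{m,d}(X)$ and $q \colon \mathcal{I} \to X$ be the two projections. Since $A$ is locally Zariski-closed in $X$, it is constructible, so $q^{-1}(A) = \mathcal{I} \cap (\mathscr{C}_{m,d}(X) \times A)$ is a constructible subset of $\mathcal{I}$. By Chevalley's theorem the image
\[
S \definiere p\bigl(q^{-1}(A)\bigr) = \{C \in \mathscr{C}_{m,d}(X) \mid |C| \cap A \neq \emptyset\}
\]
is constructible in $\mathscr{C}_{m,d}(X)$.

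It then remains to pull $S$ back along $\varphi$. Because $U_0 \subset \dom(\varphi)$, the restriction $\varphi|_{U_0} \colon U_0 \to \mathscr{C}_{m,d}(X)$ is a morphism, and preimages of constructible sets under a morphism are again constructible. For $u \in U_0$ the defining condition $u \in \mathcal{S}_G(A)$, i.e.\ $\overline{G \acts u} \cap A \neq \emptyset$, is by Theorem~\ref{Chow} equivalent to $|\varphi(u)| \cap A \neq \emptyset$, that is, to $\varphi(u) \in S$. Hence $\mathcal{S}_G(A) \cap U_0 = (\varphi|_{U_0})^{-1}(S)$ is constructible, as claimed.

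The only genuinely non-formal ingredient is the existence of the \emph{algebraic} universal family $\mathcal{I}$ over the projective Chow variety; this is what makes ``meeting $A$'' an algebraic condition on cycles and thereby allows Chevalley's theorem to apply. Note that the irreducibility of $A$ is not actually needed for this argument; it will matter only in the intended application. Once $\mathcal{I}$ is available, everything else --- constructibility of $q^{-1}(A)$, the forward image under $p$, and the pullback under $\varphi|_{U_0}$ --- is a formal consequence of the stability of constructible sets under Boolean operations, images of morphisms, and preimages of morphisms.
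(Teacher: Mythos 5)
Your proposal is correct and follows essentially the same route as the paper: both form the incidence (universal) family over the Chow variety, use Chevalley's theorem to see that the set of cycles meeting $A$ is constructible, and then pull this set back along $\varphi|_{U_0}$, using that $|\varphi(u)| = \overline{G\acts u}$ on $U_0$. The only cosmetic difference is that the paper works inside $\mathscr{C}_G(X)$ (the image of $\varphi$) rather than all of $\mathscr{C}_{m,d}(X)$, and leaves Chevalley's theorem implicit; your observation that irreducibility of $A$ is not needed is also consistent with the paper's argument.
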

\vspace{-3mm} \begin{proof}
We define $I_A \definiere \{C \in \mathscr{C}_G(X) \mid |C| \cap A \neq \emptyset \}$. We claim that
$I_A$ is a constructible subset of $\mathscr{C}_G (X)$. Consider the universal family $\mathscr{X} \definiere \{(C, x) \in \mathscr{C}_G(X) \times
X \mid x \in |C|\}$ over $\mathscr{C}_G(X)$. This is an
algebraic subvariety of $\mathscr{C}_ G(X) \times X$. Let $p_\mathscr{C} : \mathscr{X} \to
\mathscr{C}_G(X)$ and $p_X: \mathscr{X} \to X$ be the canonical projection maps. We have $I_A =
p_{\mathscr{C}}^{} (p_X^{-1}(A))$. Since $A$ is locally Zariski-closed, $I_A$ is a constructible subset of
$\mathscr{C}_G(X)$. It follows that $U_0 \cap \varphi^{-1}(I_A) = \{x \in U_0 \mid \overline{G\acts
x} \cap A \neq \emptyset\}$ is constructible in $U_0$.
\end{proof}
\subsection{Sets of semistable points: proof of Theorem~\ref{unstablepointsarealgebraic}}
The following result is the crucial step in the proof of Theorem~\ref{unstablepointsarealgebraic}.
\begin{prop}\label{prop1}
Let $G$ be a connected complex reductive Lie group and let $X$ be an irreducible algebraic
Hamiltonian $G$-variety  with momentum map $\mu: X \to \mathfrak{k}^*$. Let $X_{\mathrm{reg}}$ be the union of generic $G$-orbits in $X$. Assume that $X_{\mathrm{reg}} \cap
\mu^{-1}(0) \neq \emptyset$ and that $X(\mu)\hq G$ is projective algebraic. Then, $X(\mu)$ contains
a non-empty Zariski-open $G$-invariant subset $A$ of $X$ consisting of $G$-orbits that
are closed in $X(\mu)$.
\end{prop}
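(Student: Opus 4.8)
The plan is to transport the analytically defined sets $X(\mu)$ and its closed-orbit locus into the algebraic category by means of the Chow quotient, and then to convert analytic density into Zariski-density via constructibility. Set $m\definiere\max_{x\in X}\dim G\acts x$, so that $X_{\mathrm{reg}}$ is the Zariski-open locus of $m$-dimensional orbits, and write $Q\definiere X(\mu)\hq G$. Since $Q$ is projective it is compact, so $\mu^{-1}(0)/K\simeq Q$ (Theorem~\ref{propertiesmomentumquotients}(2)) is compact and hence, as $K$ is compact, $\mu^{-1}(0)$ is compact. Using Sumihiro's theorem exactly as in the proof of Lemma~\ref{invariantaffine}, I would reduce to the case where $X$ is a $G$-stable locally closed subvariety of some $\mathbb{P}(V)$ and let $\overline X\subset\mathbb{P}(V)$ be its closure, an irreducible projective $G$-variety. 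Applying Theorem~\ref{Chow} to $\overline X$ yields the Chow quotient $\varphi:\overline X\dasharrow\mathscr{C}\definiere\mathscr{C}_G(\overline X)$ and a non-empty $G$-invariant Zariski-open $U_0\subset\dom(\varphi)$ with $|\varphi(u)|=\overline{G\acts u}^{\overline X}$ for $u\in U_0$; after shrinking we may assume $U_0\subset X$, and since these supports are $m$-dimensional, $U_0\subset X_{\mathrm{reg}}$.

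The first key point is that $U_0\subset X(\mu)$. Consider $\mathscr{C}^0\definiere\{c\in\mathscr{C}\mid |c|\cap\mu^{-1}(0)\neq\emptyset\}$, which is the image under the proper projection $p_{\mathscr{C}}$ of the compact set $p_X^{-1}(\mu^{-1}(0))$ inside the universal family $\mathscr{X}$ of Lemma~\ref{saturationconstructible}, hence closed in $\mathscr{C}$. Recall from Section~\ref{section:functionfields} that $X^s=\pi^{-1}(\pi(X_{\mathrm{reg}}\cap\mu^{-1}(0)))$ is analytically Zariski-dense in $X(\mu)$, consists of orbits closed in $X(\mu)$, and that $\pi|_{X^s}$ is a geometric quotient. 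For $x\in X^s$ the cycle $|\varphi(x)|\supset G\acts x$ meets $\mu^{-1}(0)$, so $\varphi(X^s\cap U_0)\subset\mathscr{C}^0$; as $X^s\cap U_0$ is analytically dense in the irreducible $U_0$, its image is dense in $\overline{\varphi(U_0)}$, whence $\overline{\varphi(U_0)}\subset\mathscr{C}^0$ because $\mathscr{C}^0$ is closed. Thus for every $x\in U_0$ the cycle $|\varphi(x)|=\overline{G\acts x}^{\overline X}$ meets $\mu^{-1}(0)$, i.e.\ $x\in X(\mu)$.

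Next, let $Y\definiere X\setminus X_{\mathrm{reg}}$ and $B\definiere\{x\in U_0\mid G\acts x\text{ is not closed in }X(\mu)\}$. For $x\in U_0$ (so $x\in X(\mu)\cap X_{\mathrm{reg}}$) the orbit $G\acts x$ is closed in $X(\mu)$ iff it is the unique closed orbit in $\pi^{-1}(\pi(x))$, and it fails to be closed exactly when its $X(\mu)$-closure contains a strictly lower-dimensional closed orbit, which then lies in $Y\cap X(\mu)$; checking both inclusions gives $B=\mathcal{S}_G(Y\cap X(\mu))\cap U_0$. Arguing by induction on $\dim X$—assuming the conclusion of Theorem~\ref{unstablepointsarealgebraic} for the strictly lower-dimensional variety $Y$, whose momentum quotient $\pi(Y\cap X(\mu))$ is a closed, hence projective, subvariety of $Q$—the set $Y\cap X(\mu)=Y(\mu|_Y)$ is algebraically Zariski-open in $Y$, so locally Zariski-closed in $\overline X$. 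Decomposing it into its finitely many (automatically $G$-invariant) irreducible components and applying Lemma~\ref{saturationconstructible} to each, I conclude that $B$ is constructible.

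Consequently $X^s\cap U_0=U_0\setminus B$ is constructible, and since it is analytically dense in the irreducible $U_0$ it is Zariski-dense; hence its complement $B$ is not Zariski-dense and $A\definiere U_0\setminus\overline{B}^{\mathrm{Zar}}$ is a non-empty, $G$-invariant (as $B$ is $G$-invariant) Zariski-open subset of $X$ with $A\subset X^s\cap U_0\subset X(\mu)$, consisting of orbits closed in $X(\mu)$; this is the asserted set. I expect the main obstacle to be precisely the constructibility of $B$: the set $X(\mu)$ and its stratum $Y\cap X(\mu)$ are defined analytically through the momentum map, and replacing this analytic description by a locally Zariski-closed one is exactly what forces the dimension induction feeding Lemma~\ref{saturationconstructible}. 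Here the compactness of $\mu^{-1}(0)$ does the essential preparatory work, making $\mathscr{C}^0$ closed and thereby yielding $U_0\subset X(\mu)$, without which the reduction to the Chow quotient would not get off the ground.
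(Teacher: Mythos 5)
Your proposal contains two genuine gaps, and they are not cosmetic.

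First, the opening reduction is unavailable. Proposition~\ref{prop1} carries no normality (or $1$-rationality) hypothesis, and Sumihiro's theorem \cite{completion}, which underlies Lemma~\ref{invariantaffine} and the notion of Sumihiro neighbourhood, requires the variety to be normal; so you cannot equivariantly embed $X$ into any $\P(V)$, and the Chow-quotient machinery of Theorem~\ref{Chow} never gets started. This is not a removable convenience: in the paper, Proposition~\ref{prop1} is applied (inside Lemma~\ref{Zariskiopen}) to the subvariety $Y$ produced by Lemma~\ref{reductiontogeneric}, which is an iterated boundary of orbit strata and need not be normal even when the ambient variety has $1$-rational singularities; this is precisely why the paper keeps Proposition~\ref{prop1} free of any embedding hypothesis. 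Moreover, even granting normality, Sumihiro only gives a $G$-invariant quasi-projective neighbourhood $W$ of a point, not an embedding of $X$ itself, and replacing $X$ by $W$ destroys your key compactness step: $\mu^{-1}(0)$ is in general not contained in $W$, the set $\mu^{-1}(0)\cap W$ is no longer compact, and a cycle $\overline{G\acts u}$ (closure in $\overline{W}\subset \P(V)$) meeting $\mu^{-1}(0)$ at a point outside $W$ gives no information about the closure of $G\acts u$ inside $X$, so the conclusion $U_0\subset X(\mu)$ collapses.

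Second, the constructibility of $B=\mathcal{S}_G(Y\cap X(\mu))\cap U_0$ — the heart of your argument — rests on applying Theorem~\ref{unstablepointsarealgebraic} to $Y=X\setminus X_{\mathrm{reg}}$, and this fails twice over. Theorem~\ref{unstablepointsarealgebraic} assumes $1$-rational singularities (in particular normality), which $Y$ need not have, so the inductive hypothesis does not apply to $Y$; and in the paper's logic Theorem~\ref{unstablepointsarealgebraic} is itself deduced \emph{from} Proposition~\ref{prop1} (via Lemma~\ref{Zariskiopen} and Proposition~\ref{openproposition}), so your appeal is circular unless everything is proved by a simultaneous induction on dimension — which does not close, because passing from $X$ to $Y$ leaves the class of varieties for which the hypotheses hold. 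Note that Lemma~\ref{saturationconstructible} needs its input set to be (locally) Zariski-closed or at least constructible, and constructibility of $Y\cap X(\mu)$ is essentially the theorem you are feeding into the induction: the whole point of Proposition~\ref{prop1} is to produce a Zariski-open set of closed orbits with \emph{no} a priori algebraicity of semistable loci. The paper does this with a much lighter tool: Rosenlicht's theorem \cite{Rosenlicht2} gives a geometric quotient $p_R:U_R\to Q_R$ on a Zariski-open set with $\C(Q_R)=\C(X)^G$; the function-field results of Section~\ref{section:functionfields} together with projectivity of $X(\mu)\hq G$ identify $Q_R$ birationally with an open subset of $X(\mu)\hq G$; and then $A= X(\mu)\setminus\mathcal{S}_G(U_R^c\cap X(\mu))$ is Zariski-open because $\pi(A)$ is the complement of an analytic — hence, the quotient being projective, algebraic — subset of $X(\mu)\hq G$, and $A=p_R^{-1}(p_R(A))$. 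Your Chow-variety ideas (in particular the nice observation that compactness of $\mu^{-1}(0)$ makes the incidence set $\mathscr{C}^0$ closed) are essentially what the paper deploys one step later, in Lemma~\ref{Zariskiopen}, where a Sumihiro hypothesis is actually assumed and where the saturated set is the already-constructed, locally closed set $A$; they cannot substitute for the inputs missing here.
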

\vspace{-3mm} \begin{proof}
By a theorem of Rosenlicht \cite{Rosenlicht2} there exists a non-empty Zariski-open $G$-invariant subset $U_R$ of $X$ with geometric quotient $p_R: U_R \to Q_R$ such that $\C(Q_R) = \C(X)^G$. Note that $U_R$ intersects $\mu^{-1}(0)$ and that $Q_R$ is birational to $X(\mu) \hq G$. Let $\psi=\varphi^{-1}: Q_R \dasharrow X(\mu)\hq G$. Without loss of generality, we can assume that
the set where $\psi$ is an isomorphism onto its image is equal to $Q_R$. Hence, $\psi: Q_R
\hookrightarrow X(\mu)\hq G$ is an open embedding. Setting $A \definiere X(\mu) \setminus
\mathcal{S}_G(U_R^c \cap X(\mu)) \subset U_R$, we obtain the following commutative diagram:
\[\begin{xymatrix}{
A \ar@{^{(}->}[r]\ar[d]_{p_R|_A}& X(\mu)\ar[d]^\pi \\
p_R(A) \ar@{^{(}->}[r]^<<<<<<{\psi|_{p_R(A)}}& X(\mu)\hq G.
}
  \end{xymatrix}
\]
Since $X(\mu)\hq G$ is projective algebraic, $\pi(A) = \psi (p_R(A))$
is Zariski-open in $X(\mu)\hq G$. It follows that $A = p_R^{-1}(p_R^{ }(A))$
is Zariski-open in $U_R$ and contained in $X(\mu)$.
\end{proof}
Let $G$ be a connected algebraic group and let $X$ be an algebraic
$G$-variety. A \emph{Sumihiro neighbourhood} of a point $x \in X$ is a
$G$-invariant, Zariski-open, quasi-projective neighbourhood of $x$ in
$X$ that can be $G$-equivariantly embedded as a locally Zariski-closed subset
of the projective space associated to some rational $G$-representation. In a normal irreducible algebraic $G$-variety, every point has a Sumihiro neighbourhood by \cite{completion}.
\begin{lemma}\label{Zariskiopen}
Let $G$ be a connected complex reductive Lie group and let $X$ be an irreducible algebraic
Hamiltonian $G$-variety with momentum map $\mu: X \to
\mathfrak{k}^*$. Assume that $X(\mu)\hq G$ is projective algebraic and that every point $x
\in \mu^{-1}(0)$ has a Sumihiro neighbourhood. Then, $X(\mu)$ is either empty or
contains a non-empty Zariski-open $G$-invariant subset of $X$.
\end{lemma}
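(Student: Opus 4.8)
The plan is to dispose of the trivial case, reduce to the situation governed by Proposition~\ref{prop1}, and repair the failure of its hypothesis by a saturation argument built on the Chow quotient. First I would note that if $X(\mu)=\emptyset$ there is nothing to prove, so I may assume $\mu^{-1}(0)\neq\emptyset$; since $X(\mu)$ is then a non-empty open subset of the irreducible variety $X$, it is dense, and in particular $X(\mu)\cap X_{\mathrm{reg}}\neq\emptyset$. If already $X_{\mathrm{reg}}\cap\mu^{-1}(0)\neq\emptyset$, Proposition~\ref{prop1} applies verbatim and yields the desired $G$-invariant Zariski-open subset of $X$ contained in $X(\mu)$. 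The substance of the lemma is therefore the case $X_{\mathrm{reg}}\cap\mu^{-1}(0)=\emptyset$, in which Proposition~\ref{prop1} is not directly available because Rosenlicht's generic locus misses $\mu^{-1}(0)$.

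To treat this case I would first apply Lemma~\ref{reductiontogeneric} to obtain a $G$-irreducible subvariety $Y\subseteq X$ with $Y(\mu)\hq G\cong X(\mu)\hq G=:Q$ (still projective algebraic) and $Y_{\mathrm{reg}}\cap\mu|_Y^{-1}(0)\neq\emptyset$. Applying Proposition~\ref{prop1} to $Y$ produces a non-empty $G$-invariant subset $A\subseteq Y(\mu)$, Zariski-open in $Y$ and consisting of $G$-orbits closed in $X(\mu)$ (closedness in $Y(\mu)=Y\cap X(\mu)$ upgrades to closedness in $X(\mu)$ since $Y$ is Zariski-closed in $X$), with $\pi(A)=:B$ dense and Zariski-open in $Q$. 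By Theorem~\ref{propertiesmomentumquotients} every orbit closed in $X(\mu)$ meets $\mu^{-1}(0)$, so each orbit of $A$ meets $\mu^{-1}(0)$; consequently $\pi^{-1}(B)=\mathcal{S}_G(A)\cap X(\mu)$ is contained in $X(\mu)$ and, being the complement of the proper analytic set $\pi^{-1}(Q\setminus B)$ in irreducible $X(\mu)$, is analytically dense in $X$.

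The core step is to promote $A$ to an algebraically Zariski-open subset of $X$ inside $X(\mu)$ by saturation, and here the Sumihiro hypothesis enters. I would pick $x_0\in A\cap\mu^{-1}(0)$ with Sumihiro neighbourhood $U$, embed $U$ as a locally closed $G$-subvariety of a projective space, and pass to its projective closure $\overline U$, so that $U$ is Zariski-open in the irreducible projective $G$-variety $\overline U$. Then $A_U\definiere A\cap U$ is an irreducible $G$-invariant locally Zariski-closed subset of $\overline U$, and Theorem~\ref{Chow} together with Lemma~\ref{saturationconstructible} shows $\mathcal{S}_G(A_U)\cap U_0$ is constructible, where $U_0\subseteq\overline U$ is the Chow-generic locus. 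An orbit-closure computation gives $\mathcal{S}_G(A_U)\cap U\subseteq X(\mu)$: if $\overline{G\acts x}\cap A_U\neq\emptyset$ for $x\in U$, then $\overline{G\acts x}$ contains a point of $\mu^{-1}(0)\subseteq X$, whence $x\in X(\mu)$. For density I would shrink to the $\pi$-saturated open set $U'=\pi^{-1}(V)\subseteq U$, with $V\definiere Q\setminus\pi(X(\mu)\setminus U)\ni\pi(x_0)$; for $x\in U'$ with $\pi(x)\in B$ the unique closed orbit over $\pi(x)$ lies in $A$ and, by saturation of $U'$, in $U$, hence in $A_U$, so $x\in\mathcal{S}_G(A_U)$. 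This exhibits the analytically dense set $\pi^{-1}(B\cap V)$ inside $\mathcal{S}_G(A_U)$, forcing the constructible set $\mathcal{S}_G(A_U)\cap U_0\cap U$ to be dense in $U$; it therefore contains a non-empty $G$-invariant algebraically Zariski-open subset $W$, which is Zariski-open in $X$ and satisfies $W\subseteq X(\mu)$.

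The main obstacle is exactly this last saturation step. The Chow quotient is available only over a projective base, so one must localise to a single Sumihiro neighbourhood and then reconcile three different orbit closures — in $X(\mu)$, in $U$, and in $\overline U$ — while simultaneously ensuring that the algebraically constructible saturation is analytically dense. The passage to the $\pi$-saturated $U'$ is the device that places the relevant closed orbit inside $A_U$ over each fibre, and is precisely what matches the algebraic constructibility supplied by Lemma~\ref{saturationconstructible} with the analytic density coming from the momentum-map quotient $\pi$.
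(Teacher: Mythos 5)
Your proposal is correct and follows essentially the same route as the paper: reduction via Lemma~\ref{reductiontogeneric}, application of Proposition~\ref{prop1} to obtain the set $A$ of closed orbits, and then constructibility of the saturation $\mathcal{S}_G(A\cap U)$ inside a projectively closed Sumihiro neighbourhood via Theorem~\ref{Chow} and Lemma~\ref{saturationconstructible}, combined with analytic density coming from openness of $\pi(A)$. Your version is in fact slightly more careful than the paper's in two spots -- choosing the base point in $A\cap\mu^{-1}(0)$ so that the Sumihiro hypothesis literally applies, and making explicit the saturation argument (via $V=Q\setminus\pi(X(\mu)\setminus U)$) that the paper compresses into the assertion that $\pi^{-1}(\pi(A\cap W))\cap W\subset\mathcal{S}_G(A\cap W)\cap W$ is open.
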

\vspace{-3mm} \begin{proof}Assume that $X(\mu)$ is non-empty.
There exists an irreducible $G$-invariant subvariety $Y$ of $X$ with
$\mu^{-1}(0) \subset Y$ and $Y_{\mathrm{reg}}\cap \mu^{-1}(0) \neq \emptyset$, cf. Lemma \ref{reductiontogeneric}. It follows that
$X(\mu)\hq G = Y(\mu)\hq G$. By Proposition
\ref{prop1} there exists a non-empty Zariski-open $G$-invariant subset $A$ of $Y$ which is contained in
$X(\mu)$ and such that $\pi(A)$ is an open subset of $X(\mu)\hq G$. Furthermore, $A$ consists of
orbits which are closed in $X(\mu)$.

Let $W$ be a Sumihiro neighbourhood of a point $x \in A$ and let $\psi: W \to  \P(V)$ be a $G$-equivariant embedding of $W$ into the projective
space associated to a rational $G$-representation $V$. Let $Z$ be the closure of $\psi(W)$ in $\P(V)$.
Given a subset of $Z$ of the form $U_0$ as in Theorem \ref{Chow}, it follows from Lemma
\ref{saturationconstructible} that $\mathcal{S}^Z_G(\psi(A \cap W))\cap U_0$ is constructible in
$U_0$. Therefore,
$\mathcal{S}_G^X(A \cap W)\cap W$ contains a non-empty Zariski-open subset $\widetilde U$ of its closure. By
construction, $\pi(A \cap W)$ is open in $X(\mu)\hq
G$, and hence, $\pi^{-1}(\pi(A \cap W)) \cap W$ is an open subset of $X$ that is contained in
$\mathcal{S}_G(A \cap W)\cap W$. It follows that $\widetilde{U}$ is  Zariski-open in
$X$, non-empty, and contained in $X(\mu)$.
\end{proof}
\begin{prop}\label{openproposition}
Let $G$ be a connected complex reductive Lie group and let $X$ be an algebraic Hamiltonian
$G$-variety with momentum map $\mu: X \to \mathfrak{k}^*$. Assume that $X(\mu)\hq G$ is projective algebraic and that every point in
$\mu^{-1}(0)$ has a Sumihiro neighbourhood. Then,
$X(\mu)$ is Zariski-open in $X$.
\end{prop}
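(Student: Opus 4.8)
The plan is to deduce Zariski-openness from two ingredients: the \emph{analytic} openness of $X(\mu)$, which is already available from Theorem~\ref{propertiesmomentumquotients}, and the \emph{constructibility} of $X(\mu)$, which I will establish by induction. The bridge between the two is the elementary principle that a constructible subset $S$ of a complex algebraic variety whose underlying set is open in the analytic topology is automatically Zariski-open: for any constructible set the Zariski closure and the analytic closure coincide (a constructible set contains a Zariski-dense open subset of its Zariski closure, which is analytically dense there), so the analytically closed, constructible complement $X\setminus S$ is Zariski-closed. Thus the whole task reduces to proving that $X(\mu)$ is constructible, and I shall run the induction directly on the statement "$X(\mu)$ is Zariski-open", proceeding by induction on $\dim X$.

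Consider first $X$ irreducible. If $X(\mu)=\emptyset$ there is nothing to prove. Otherwise Lemma~\ref{Zariskiopen} supplies a non-empty, hence dense, Zariski-open $G$-invariant subset $U\subset X(\mu)$. I set $Z\definiere X\setminus U$, a $G$-invariant Zariski-closed subvariety with $\dim Z<\dim X$. Since $Z$ is closed and $G$-invariant we have $\overline{G\acts z}\subset Z$ for $z\in Z$, whence $X(\mu)\cap Z = Z(\mu|_Z)$. Here $Z$ inherits all hypotheses of the proposition: restricting the local K\"ahler potentials and $\mu$ turns $Z$ into an algebraic Hamiltonian $G$-variety with momentum map $\mu|_Z$; a Sumihiro neighbourhood of a point of $\mu^{-1}(0)\cap Z$ is obtained by intersecting one in $X$ with $Z$; and $Z(\mu|_Z)\hq G\cong\pi(Z\cap X(\mu))$ is, by property~(4) of the analytic Hilbert quotient, a closed analytic subset of the projective variety $X(\mu)\hq G$, hence projective algebraic by Chow's Theorem. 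By the induction hypothesis $Z(\mu|_Z)$ is therefore Zariski-open in $Z$, hence locally closed and in particular constructible in $X$. As $U$ is Zariski-open, $X(\mu)=U\cup Z(\mu|_Z)$ is constructible in $X$, and the criterion of the first paragraph shows $X(\mu)$ to be Zariski-open.

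For reducible $X$ of dimension $d$ I decompose $X=\bigcup_i X_i$ into its irreducible components, which are automatically $G$-invariant since $G$ is connected. Exactly as above $\overline{G\acts x}\subset X_i$ for $x\in X_i$, so $X(\mu)\cap X_i=X_i(\mu|_{X_i})$ and $X(\mu)=\bigcup_i X_i(\mu|_{X_i})$. Each $X_i$ inherits the hypotheses just as $Z$ did; those components with $\dim X_i<d$ are covered by the induction hypothesis and those with $\dim X_i=d$ by the irreducible case just treated, so in every case $X_i(\mu|_{X_i})$ is Zariski-open in $X_i$ and hence constructible in $X$. A finite union of constructible sets being constructible, $X(\mu)$ is constructible, and being analytically open it is Zariski-open. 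This closes the induction.

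The conceptual heart of the argument—and its main obstacle—is the passage from analytic to algebraic openness: a priori $X(\mu)$ is known to be open only for the fine analytic topology, and Lemma~\ref{Zariskiopen} by itself yields merely a dense Zariski-open piece of it. The device that makes everything work is to feed that dense piece into a dimension induction so as to obtain \emph{constructibility} of all of $X(\mu)$, and only then to invoke the constructible-plus-analytically-open criterion. The remaining, routine but indispensable, points are the verifications that the Hamiltonian structure, the Sumihiro neighbourhoods, and—most delicately—the projectivity of the quotient are inherited by the $G$-invariant subvarieties $X_i$ and $Z$ to which the induction is applied.
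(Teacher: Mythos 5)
Your proof is correct, and its engine is the same as the paper's: extract, via Lemma~\ref{Zariskiopen}, a non-empty $G$-invariant Zariski-open subset of $X$ lying inside $X(\mu)$, pass to the proper closed $G$-invariant complement $Z$ --- which, being closed and invariant, satisfies $X(\mu)\cap Z = Z(\mu|_Z)$ and inherits the Hamiltonian structure, the Sumihiro neighbourhoods, and (by the quotient's behaviour on invariant analytic subsets plus Chow's Theorem) the projectivity of the quotient --- and then induct. The differences are in the packaging. The paper runs a Noetherian induction, removing from $X$ the Zariski-open set $V_0\setminus\bigcup_{j\neq j_0}X_j$ and recursing on the resulting proper closed subvariety $\widetilde X$, and it concludes directly from the identity $X\setminus X(\mu)=\widetilde X\setminus\widetilde X(\mu|_{\widetilde X})$, whose right-hand side is Zariski-closed by induction; you instead induct on dimension after splitting into irreducible components, prove \emph{constructibility} of $X(\mu)$, and then invoke the (correct) principle that a constructible, analytically open subset is Zariski-open. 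That last ingredient is dispensable in your own setup: since $U\subset X(\mu)$ and $X(\mu)\cap Z=Z(\mu|_Z)$, you already have $X\setminus X(\mu)=Z\setminus Z(\mu|_Z)$, which is Zariski-closed in $Z$, hence in $X$, by your induction hypothesis --- no appeal to the analytic openness of $X(\mu)$ is needed. So your detour through the constructible-plus-analytically-open criterion is a valid alternative finish (and a tidy general observation), but it buys nothing here that the induction does not already provide.
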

\vspace{-3mm} \begin{proof}
We may assume that $X(\mu)$ is non-empty. Let $X = \bigcup_j X_j$ be the decomposition of $X$ into irreducible components. Note that
$X_j(\mu|_{X_j}) = X_j \cap X(\mu)$ holds for every $j$ and that $X_j(\mu|_{X_j})\hq G$ is projective
algebraic. There exists a $j_0$ such that $X_{j_0}\bigl(\mu|_{X_{j_0}}\bigr)$ is non-empty. Let $V_{0}$ be the Zariski-open subset of $X_{j_0} \cap X(\mu)$ whose existence is guaranteed by Lemma \ref{Zariskiopen}. Consider  $\widetilde {X} \definiere X \setminus \bigl(V_{0} \setminus \bigcup_{j\neq j_0} X_j \bigr)$. Either $\widetilde X = X \setminus X(\mu)$ or $\widetilde X (\mu|_{\widetilde X}) \neq \emptyset$. In the first case we are done, since $\widetilde X$ is algebraic in $X$. In the second case, we note that $X\setminus X(\mu) \subset \widetilde X \setminus \widetilde X (\mu|_{\widetilde X})$, that $\widetilde X (\mu|_{\widetilde X}) \hq G = \pi (\widetilde X (\mu|_{\widetilde X}))$ is projective algebraic and that the existence of Sumihiro neighbourhoods is inherited by $\widetilde X$. We conclude by Noetherian induction.
\end{proof}

\vspace{-3.5mm} \begin{proof}[Proof of Theorem \ref{unstablepointsarealgebraic}]
Recall that the connected component $G_0$ of the identity of $G$ is the
complexification of the connected component $K_0$ of the identity in $K$. The
set $\mathcal{S}_{G_0}(\mu_0^{-1}(0))$ of semistable points with respect to $G_0$
and the restricted momentum map for the $K_0$-action, $\mu_0 = \mu: X \to \mathfrak{k}= \Lie
(K_0)^*$, coincides with
$X(\mu)$, as we have seen in the proof of Theorem \ref{1}. The finite group $\Gamma = G/G_0$ acts on $X(\mu)\hq G_0$ with quotient $X(\mu)\hq G$, which is projective algebraic by Theorem~\ref{1}. Hence, $X(\mu)\hq G_0$ admits a finite map to a projective algebraic variety and is therefore itself projective-algebraic. The variety $X$ has $1$-rational singularities by assumption; in
particular, it is normal. Consequently, every
point in $\mu^{-1}(0)$ has a Sumihiro neighbourhood for the action of $G_0$. Hence,
Proposition \ref{openproposition} applies and $X(\mu)$ is Zariski-open in $X$.
\end{proof}
\begin{rem}
Example~\ref{noncompactcounterexample} shows that the claim of Theorem~\ref{unstablepointsarealgebraic} is no longer true if $\mu^{-1}(0)$ and $X(\mu)\hq G$ are non-compact.
\end{rem}
\section{Non-zero levels of the momentum map: the shifting trick}\label{section:shifting}
In this section we consider quotients of arbitrary level sets of the momentum map and extend Theorem~\ref{1} and Theorem~\ref{unstablepointsarealgebraic} to this situation with the help of the so-called "shifting trick".

Let $X$ be a complex Hamiltonian $G$-space with momentum map $\mu: X \to \mathfrak{k}^*$ and let $\alpha \in \mu(X) \subset \mathfrak{k}^*$. Then, the level set $\mu^{-1}(\alpha)$ is invariant under the isotropy group $K_\alpha$ of $\alpha$ in the coadjoint representation $\Ad^*: K \to \mathrm{GL}_\R(\mathfrak{k}^*)$. We will recall the classical construction that endows the topological space $\mu^{-1}(\alpha) / K_\alpha$ with a complex structure. The coadjoint orbit $Y = \Ad ^*(K) (\alpha)$ through $\alpha$ carries a $K$-invariant complex structure such that the negative of the Kostant-Kirillov form $\omega_Y$ on $Y$ is K\"ahler. In fact, $Y$ is biholomorphic to $G / P$ for some parabolic subgroup $P$ of $G$; in particular, $Y$ is a projective algebraic manifold. A momentum map for the natural action of $K$ on $Y$ with respect to $-\omega_Y$ is given by $-\imath _Y: Y \hookrightarrow \mathfrak{k}^*, \xi \mapsto - \xi$.

Let $Z \definiere X \times Y$ with the product complex structure and K\"ahler structure $\omega_X - \omega_Y$. The momentum map for the diagonal $K$-action on $Z$ is then given by $\mu_Z (x, \beta) = \mu(x) - \beta$. It follows that $\mu_Z^{-1}(0) = K \acts \left(\mu^{-1}(\alpha) \times \{\alpha\} \right)$. Let $Z(\mu_Z)$ be the corresponding set of semistable points and
\[X( \mu, \alpha) \definiere \{x \in X \mid \overline{G\acts x} \cap \mu^{-1}(\alpha) \neq \emptyset\}.\]
This is an open subset in $X$, cf.\ \cite{Potentialsandconvexity}, Section 6.
By construction, $\mu^{-1}(\alpha) / K_\alpha$ and $\mu^{-1}_Z(0) /K$ are homeomorphic. Consequently, $\mu^{-1} (\alpha)/K_\alpha$ is a K\"ahlerian complex space by Theorem~\ref{propertiesmomentumquotients}.
\begin{thm}\label{shifting}
Let $X$ be an algebraic Hamiltonian $G$-variety with momentum map $\mu: X \to \mathfrak{k}^*$. Assume that $X$ has only $\mathit{1}$-rational singularities. Let $\alpha \in \mu(X)$ such that $\mu^{-1}(\alpha)$ is compact. Then, the complex space $\mu^{-1}(\alpha) / K_\alpha$ is projective algebraic. Furthermore, the set $X( \mu, \alpha)$ of semistable points with respect to $\mu^{-1}(\alpha)$ is algebraically Zariski-open in $X$.
\end{thm}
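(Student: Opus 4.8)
The plan is to apply the Projectivity Theorem and Theorem~\ref{unstablepointsarealgebraic} to the shifted space $Z\definiere X\times Y$ introduced above, and then to transport the two conclusions back to $X$ through the slice $X\times\{\alpha\}$. First I would verify that $Z$ satisfies the hypotheses of those theorems. Since $Y\cong G/P$ is a smooth projective $G$-variety, $Z=X\times Y$ is again an algebraic Hamiltonian $G$-variety, with the diagonal $G$-action, the product K\"ahler structure $\omega_X-\omega_Y$, and momentum map $\mu_Z(x,\beta)=\mu(x)-\beta$, as recorded in the construction preceding the statement. To see that $Z$ has $1$-rational singularities, take a $G$-equivariant resolution $f\colon\widetilde X\to X$ with $R^1f_*\mathscr O_{\widetilde X}=0$ (available since $X$ is normal); as $Y$ is smooth, $f\times\id_Y\colon\widetilde X\times Y\to Z$ is a $G$-equivariant resolution, and being the base change of $f$ along the flat projection $Z\to X$, flat base change gives $R^1(f\times\id_Y)_*\mathscr O_{\widetilde X\times Y}=0$. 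Finally $\mu_Z^{-1}(0)=K\acts(\mu^{-1}(\alpha)\times\{\alpha\})$ is the continuous image of the compact set $K\times\mu^{-1}(\alpha)$, hence compact.

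For the first assertion I would apply Theorem~\ref{1} to $Z$ (the $G$-irreducibility hypothesis is unnecessary, as noted in Remark~c) after that theorem; if $Z$ is reducible one argues on its disjoint irreducible components) to conclude that $Z(\mu_Z)\hq G$ is a projective algebraic variety. By the very construction recalled above, the complex structure on $\mu^{-1}(\alpha)/K_\alpha$ is the one transported along the homeomorphism $\mu^{-1}(\alpha)/K_\alpha\cong\mu_Z^{-1}(0)/K$, and by Theorem~\ref{propertiesmomentumquotients} the latter is biholomorphic to $Z(\mu_Z)\hq G$. Hence $\mu^{-1}(\alpha)/K_\alpha$ is projective algebraic.

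For the second assertion, Theorem~\ref{unstablepointsarealgebraic} applied to $Z$ (again dropping irreducibility by working on components, as in Proposition~\ref{openproposition}) shows that $Z(\mu_Z)$ is algebraically Zariski-open in $Z=X\times Y$. Let $\iota_\alpha\colon X\embeds Z$, $x\mapsto(x,\alpha)$, be the closed immersion of the slice over $\alpha$. Since $\iota_\alpha$ is a morphism, $\iota_\alpha^{-1}(Z(\mu_Z))$ is Zariski-open in $X$, so it suffices to prove the shifting identity $\iota_\alpha^{-1}(Z(\mu_Z))=X(\mu,\alpha)$, that is, $(x,\alpha)\in Z(\mu_Z)$ if and only if $x\in X(\mu,\alpha)$. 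The inclusion ``$\subseteq$'' is a short computation: if $(x',\beta')\in\overline{G\acts(x,\alpha)}\cap\mu_Z^{-1}(0)$, then $x'\in\overline{G\acts x}$ and $\mu(x')=\beta'\in Y=\Ad^*(K)(\alpha)$, say $\beta'=\Ad^*(k)\alpha$; by $K$-equivariance of $\mu$ the point $k^{-1}\acts x'$ lies in $\overline{G\acts(k^{-1}\acts x)}\cap\mu^{-1}(\alpha)$, so $k^{-1}\acts x\in X(\mu,\alpha)$, and since $X(\mu,\alpha)$ depends only on $\overline{G\acts x}$ it is $G$-invariant, whence $x\in X(\mu,\alpha)$.

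The main obstacle is the reverse inclusion ``$\supseteq$'': given $x_0\in\overline{G\acts x}\cap\mu^{-1}(\alpha)$ one must produce a point of $\mu_Z^{-1}(0)$ inside the diagonal orbit closure $\overline{G\acts(x,\alpha)}$. The difficulty is that the diagonal action couples the two factors, so a net $g_n\acts x\to x_0$ realising the limit in $X$ need not satisfy $g_n\acts\alpha\to\alpha$, and the naive limit $(x_0,\lim g_n\acts\alpha)$ will in general have nonzero $\mu_Z$. This is exactly the analytic content of the shifting trick at the level of semistable sets, namely that $\alpha$-semistability of $x$ in $X$ is equivalent to $0$-semistability of $(x,\alpha)$ in $Z$. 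I would establish it from the momentum-map theory underlying Theorem~\ref{propertiesmomentumquotients} --- either through the gradient flow of $-\|\mu_Z\|^2$ together with the exhaustion and convergence results of \cite{Potentialsandconvexity} and \cite{CartanDecomposition}, or by passing to an affine Slice-Theorem model (Theorem~\ref{algebraicslice}) around the closed orbit $G\acts(x_0,\alpha)\subset\mu_Z^{-1}(0)$ and applying the Hilbert--Mumford criterion there --- to conclude that $\overline{G\acts(x,\alpha)}$ meets $\mu_Z^{-1}(0)$. Granting the identity $\iota_\alpha^{-1}(Z(\mu_Z))=X(\mu,\alpha)$, the Zariski-openness of $X(\mu,\alpha)$ in $X$ follows immediately.
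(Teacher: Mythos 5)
Your reduction to the shifted space $Z = X\times Y$, the verification that $Z$ has $1$-rational singularities (flat base change is a perfectly adequate substitute for the K\"unneth argument the paper uses), the compactness of $\mu_Z^{-1}(0)$, and the deduction of the first claim from Theorem~\ref{1} via the homeomorphism $\mu^{-1}(\alpha)/K_\alpha \simeq \mu_Z^{-1}(0)/K \simeq Z(\mu_Z)\hq G$ all agree with the paper's proof and are correct. Likewise, invoking Theorem~\ref{unstablepointsarealgebraic} to get that $Z(\mu_Z)$ is Zariski-open in $Z$ is the right move.

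The gap is in how you transport this openness back to $X$. The identity you reduce to, $\iota_\alpha^{-1}(Z(\mu_Z)) = X(\mu,\alpha)$, is not merely difficult in the direction you flag as the main obstacle --- it is \emph{false}, so neither the gradient flow of $-\|\mu_Z\|^2$ nor a Hilbert--Mumford argument can establish it. Counterexample: let $G = SL_2(\C)$, $K = SU(2)$, and let $X = \P_1$ carry the Fubini--Study structure scaled so that $\mu(X)$ equals the coadjoint orbit $Y = \Ad^*(K)(\alpha)$, where $\alpha \definiere \mu([e_1])$ and $e_1,e_2$ is the standard basis of $\C^2$; then $\mu$ is a $K$-equivariant diffeomorphism onto $Y$, with $\mu^{-1}(\alpha) = \{[e_1]\}$ and $\mu([e_2]) = -\alpha$. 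Since $G$ acts transitively on $X$, we have $X(\mu,\alpha) = \P_1$. Now identify $Y$ $G$-equivariantly with $\P_1$; the image of $\alpha$ is a $T$-fixed point, hence $[e_1]$ or $[e_2]$, and it must be $[e_2]$: otherwise the momentum map $\beta\mapsto-\beta$ of the $Y$-factor would transport to $-\mu$, whose associated $(1,1)$-form is the negative form $-\omega_X$, contradicting that $-\omega_Y$ is K\"ahler on $Y$. (This sign flip is exactly the effect of putting the opposite complex structure on the coadjoint orbit.) With $\alpha \mapsto [e_2]$, the $Y$-factor momentum map transports to $+\mu$, so $Z \cong \P_1\times\P_1$ with the diagonal action, product Fubini--Study structure, and momentum map $([v],[w])\mapsto \mu([v])+\mu([w])$. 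Hence $\mu_Z^{-1}(0) = \{([v],[v^\perp])\}$ is the set of orthogonal pairs, which is disjoint from the diagonal $\Delta$, and since the $G$-orbits in $\P_1\times\P_1$ are $\Delta$ and its dense complement, $Z(\mu_Z) = (\P_1\times\P_1)\setminus\Delta$ (the GIT-semistable set of $\mathscr{O}_{\P_1\times\P_1}(1,1)$). Therefore $\iota_\alpha^{-1}(Z(\mu_Z)) = \{x \mid (x,[e_2])\notin\Delta\} = \P_1\setminus\{[e_2]\}$, a proper subset of $X(\mu,\alpha) = \P_1$: the point $x=[e_2]$ is $\alpha$-semistable in $X$, yet $(x,\alpha)$ lies on $\Delta$, whose orbit closure misses $\mu_Z^{-1}(0)$. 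Only your inclusion ``$\subseteq$'' is true, and a Zariski-open subset of $X(\mu,\alpha)$ does not give openness of $X(\mu,\alpha)$ itself.

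The repair --- and it is what the paper does --- is to push forward along the projection $p_X: Z \to X$ rather than pull back along the slice $\iota_\alpha$: one shows $p_X(Z(\mu_Z)) = X(\mu,\alpha)$, and since $p_X$ is regular and flat, hence Zariski-open, the image of the Zariski-open set $Z(\mu_Z)$ is Zariski-open in $X$. Your ``$\subseteq$'' computation proves precisely $p_X(Z(\mu_Z)) \subseteq X(\mu,\alpha)$, since it nowhere uses $\beta = \alpha$. For the reverse inclusion the freedom in the $Y$-coordinate dissolves the coupling problem you identified: if $x_0 \in \overline{G\acts x}\cap\mu^{-1}(\alpha)$, choose $g_n$ with $g_n\acts x \to x_0$; then $(g_n\acts x,\alpha)\to(x_0,\alpha)\in\mu_Z^{-1}(0)\subset Z(\mu_Z)$, and since $Z(\mu_Z)$ is open in the Hausdorff topology (Theorem~\ref{propertiesmomentumquotients}) and $G$-invariant, for large $n$ we get $(x,\,g_n^{-1}\acts\alpha) = g_n^{-1}\acts(g_n\acts x,\alpha)\in Z(\mu_Z)$, i.e.\ $x \in p_X(Z(\mu_Z))$. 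Note that the semistable partner of $x$ is $g_n^{-1}\acts\alpha$, not $\alpha$ itself --- in the counterexample above, $([e_2],\beta)$ is semistable for every $\beta \neq [e_2]$, just not for $\beta = \alpha = [e_2]$.
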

\vspace*{-3mm}
\begin{proof}
Since the coadjoint orbit $Y$ is smooth, given a resolution $f: \widetilde X \to X$ of $X$, the product map $f \times \id _ Y: \widetilde X \times Y \to X \times Y$ is a resolution of $Z$. It then follows from the K\"unneth formula for coherent algebraic sheaves, see \cite{Kuennethformula}, that $Z$ has only $1$-rational singularities.
Since the complex structure on $\mu^{-1}(\alpha) /K_\alpha$ is given via the homeomorphism to $\mu^{-1}_Z(0)/K \simeq Z(\mu_Z) \hq G$, the first claim follows from the Projectivity Theorem, Theorem ~\ref{1}. For the second claim, we note that the projection $p_X: Z \to X$ maps $Z(\mu_Z) $ onto $X(\mu, \alpha)$. Since $p_X$ is regular and open, it therefore follows from Theorem~\ref{unstablepointsarealgebraic} that $X(\mu, \alpha)$ is Zariski-open in $X$.
\end{proof}
\begin{rem}
Note that Theorem~\ref{shifting} applies in particular to the quotient and the set of semistable points associated to any fibre $\mu^{-1}(\alpha)$ of a proper momentum map $\mu$.
\end{rem}

\section{Embedding $G$-spaces into $G$-vector bundles over
$X\hq G$}\label{sectionembedding}
Let $X$ be a holomorphic $G$-space for a complex reductive Lie group $G$. Assume that the analytic Hilbert quotient $\pi: X \to X\hq G$ exists and that $X\hq G$ is the complex space associated to a projective algebraic variety. In the subsequent sections, we study the implications of this assumption for the equivariant geometry of $X$ and give a proof of Theorem~\ref{2}.
\subsection{Coherent $G$-sheaves and analytic Hilbert quotients}\label{section:coherent}
For a coherent analytic $G$-sheaf $\mathscr{F}$ over $X$ (see e.g.\ \cite{HeinznerCoherent} for the definition) we denote by $(\pi_*\mathscr{F})^G$ the sheaf of invariants on $X \hq G$, i.e., for an open set $Q \subset X\hq G$, we have
\[(\pi_*\mathscr{F})^G (Q) \definiere \mathscr{F}(\pi^{-1} (Q))^G= \{\sigma \in \mathscr{F}(\pi^{-1}(Q)) \mid \sigma(g\acts x) = g\acts \sigma (x)\}.\]
 It is a fundamental result of Roberts \cite{Roberts} and Hausen-Heinzner \cite{HeinznerCoherent} that $(\pi_*\mathscr{F})^G$ is a coherent analytic sheaf on $X\hq G$. An analogous result also holds for the $G$-invariant push-forward of coherent algebraic $G$-sheaves to algebraic Hilbert quotients. Basic examples of coherent analytic $G$-sheaves are provided by sheaves of sections of $G$-vector bundles on a holomorphic $G$-space $X$.
\subsection{The basic construction}
We construct sheaves of equivariant maps that will later on be used to embed a given holomorphic $G$-space into a $G$-vector bundle over $X\hq G$.

Let $L$ be holomorphic line bundle on the quotient $X\hq G$. By GAGA \cite{GAGA}, $L$ is an algebraic line bundle, i.e., there
exists a covering $\{U_\alpha\}_\alpha$ of $X\hq G$ by Zariski-open subsets $U_\alpha$
of $X\hq G$ such that $L|_{U_\alpha}$ is algebraically trivial, and a
collection $\{g_{\alpha \beta}\}_{\alpha, \beta}$ of regular transition functions on
the intersections $U_{\alpha\beta} \definiere U_\alpha \cap
U_\beta$. We will denote the corresponding locally free sheaf by $\mathscr{L}$. Let $V$ be a $G$-module.
Consider the vector bundle $\mathcal{V}$ of rank equal to $\dim V$ that is trivial on $U_\alpha$ and has transition functions $g_{\alpha\beta}\cdot \Id_V: U_{\alpha\beta} \rightarrow  GL_\C(V)$. The bundle $\mathcal{V}$ is isomorphic to the vector bundle $V \tensor L$. Since $g_{\alpha\beta}(U_{\alpha\beta})\Id_V$ is contained in the center of $GL_\C(V)$, the group actions
\[ G \times (U_\alpha \times V) \rightarrow U_\alpha \times V, \;
g \acts (z,v) = (z, g\cdot v)\]
glue to give a global algebraic action $G \times \mathcal{V} \rightarrow \mathcal{V}$.

The action of
$G$ trivially lifts to an action of $G$ on the line bundle $\pi^*L$ by bundle
automorphisms; over $\pi^{-1}(U_\alpha)$ the action is
given by $(x, w) \mapsto (g\acts x, w)$. This action and the diagonal
$G$-action on $X\times V$ yield an action of $G$ on the vector bundle $V \otimes \pi^*L$ by
bundle automorphisms making the bundle projection equivariant. More
precisely, the bundle $V\otimes \pi^*L$ trivialises over
$\{\pi^{-1}(U_\alpha)\}$ via an isomorphism $\phi_\alpha: V \tensor \pi^*L \to \pi^{-1}(U_\alpha) \times V$ and on $(V\otimes
\pi^*L)|_{\pi^{-1}(U_\alpha)} \cong \pi^{-1}(U_\alpha) \times V$
the action is given by
\[ G \times \left( \pi^{-1} (U_\alpha) \times V \right) \rightarrow
\pi^{-1} (U_\alpha) \times V, \quad g\acts (x, v) = (g \acts x, g\cdot
v).\]
It follows that the sheaf $V \tensor \pi^*\mathscr L$ of germs of sections of $ V\otimes \pi^*L$ is a
coherent analytic $G$-sheaf on $X$. We are going to use sections of $\pi_*(V\tensor \pi^* \mathscr{L})^G$ to construct the maps that will yield the embedding of our given holomorphic $G$-space $X$.

\begin{lemma}\label{lemma:basicconstruction}
Let $X$ be a complex $G$-space such that the analytic Hilbert quotient $X\hq G$ exists as a projective algebraic complex space. Let $V$ be a $G$-module. Let $L$ be an algebraic line bundle on $X\hq G$ and $\mathscr{L}$ the associated locally free sheaf.
Then, an element $s \in H^0(X\hq G, \pi_*(V\tensor \pi^* \mathscr{L})^G)$ yields
 a $G$-equivariant holomorphic map $\sigma: X \rightarrow \mathcal{V}$
 into the algebraic $G$-vector bundle $\mathcal{V}=  V \tensor L$ over $X\hq G$ such that the following diagram commutes
\[
\begin{xymatrix}{
 X \ar[rd]_{\pi}\ar[rr]^{\sigma}&   & \mathcal{V}\ar[ld] \\
   & X\hq G. &
}
\end{xymatrix}
\]
\end{lemma}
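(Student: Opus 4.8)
The plan is to unwind the definition of a global section of $\pi_*(V\tensor\pi^*\mathscr{L})^G$ and then to compose it with a tautological equivariant bundle morphism covering $\pi$. By the definition of the invariant push-forward recalled in Section~\ref{section:coherent}, a section $s \in H^0(X\hq G, \pi_*(V\tensor\pi^*\mathscr{L})^G)$ is precisely a $G$-equivariant holomorphic section $s: X \to V\tensor\pi^*L$ of the bundle $V\tensor\pi^*L \to X$; that is, $s$ satisfies $p\circ s = \id_X$, where $p: V\tensor\pi^*L\to X$ is the bundle projection, together with $s(g\acts x) = g\acts s(x)$ for all $g\in G$ and $x\in X$.

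The key geometric ingredient is the canonical morphism $\Theta: V\tensor\pi^*L \to \mathcal{V} = V\tensor L$ of $G$-vector bundles covering $\pi$, induced by the tautological map $\pi^*L \to L$ of the pulled-back line bundle. Concretely, over $\pi^{-1}(U_\alpha)$ I would write $\Theta$ in the trivialisations fixed before the lemma as $\pi^{-1}(U_\alpha)\times V \to U_\alpha\times V$, $(x,v)\mapsto(\pi(x),v)$. I then set $\sigma \definiere \Theta\circ s: X \to \mathcal{V}$.

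Three verifications remain, each a short computation in these local trivialisations. First, $\Theta$ is globally well defined: on an overlap $\pi^{-1}(U_{\alpha\beta})$ the two local expressions agree because the transition functions of both $V\tensor\pi^*L$ and $\mathcal{V}$ are given by the same scalars $g_{\alpha\beta}\cdot\Id_V$ (pulled back by $\pi$ in the case of $V\tensor\pi^*L$), so the gluing is automatic. Second, $\Theta$ is $G$-equivariant: using the local actions $g\acts(x,v)=(g\acts x,\, g\cdot v)$ on $V\tensor\pi^*L$ and $g\acts(z,v)=(z,\, g\cdot v)$ on $\mathcal{V}$, and noting that $\pi(g\acts x)=\pi(x)$ keeps $g\acts x$ in the same chart $\pi^{-1}(U_\alpha)$, one computes $\Theta(g\acts(x,v)) = (\pi(g\acts x),\, g\cdot v) = (\pi(x),\, g\cdot v) = g\acts\Theta(x,v)$. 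Combining this with the equivariance of $s$ gives $\sigma(g\acts x) = \Theta(g\acts s(x)) = g\acts\sigma(x)$. Third, $\sigma$ lies over $\pi$: since $\Theta$ covers $\pi$ and $s$ is a section, $p_{\mathcal{V}}\circ\sigma = p_{\mathcal{V}}\circ\Theta\circ s = \pi\circ p\circ s = \pi$, which is exactly the commutativity of the stated diagram. Holomorphy of $\sigma$ is immediate, being a composition of the holomorphic maps $s$ and $\Theta$.

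I expect no serious obstacle: this is a \emph{basic construction}, and its entire content lies in correctly identifying invariant push-forward sections with equivariant sections and in exploiting that the scalar, hence central, nature of the transition cocycle $g_{\alpha\beta}\cdot\Id_V$ makes $\Theta$ both globally well defined and $G$-equivariant. The one point deserving care is the bookkeeping of the two distinct $G$-actions --- the diagonal one on $V\tensor\pi^*L$ over $X$ versus the action on $\mathcal{V}$ that is trivial on the base $X\hq G$ --- which is reconciled precisely by the $G$-invariance of $\pi$.
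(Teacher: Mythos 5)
Your proposal is correct and takes essentially the same approach as the paper: the paper likewise unwinds $s$ into local $G$-equivariant maps $s_\alpha: \pi^{-1}(U_\alpha) \to V$ satisfying $s_\alpha(x) = g_{\alpha\beta}(\pi(x))\cdot s_\beta(x)$ and glues the maps $x \mapsto (\pi(x), s_\alpha(x))$ into $\sigma$ using the scalar transition cocycle $g_{\alpha\beta}\cdot \Id_V$. Your tautological morphism $\Theta: V\tensor\pi^*L \to V\tensor L$ covering $\pi$ is only a conceptual repackaging of that gluing --- its local expression is exactly the paper's $\sigma_\alpha$, and the verifications (well-definedness on overlaps, equivariance, commutativity of the diagram) are the same computations.
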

\vspace*{-3mm}
\begin{proof}
Let $s$ be an element of $H^0(X\hq G, \pi_*(V \tensor \pi^* \mathscr{L})^G)$. Then, the restriction $s|_{\pi^{-1} (U_\alpha)}$ yields a $G$-equivariant map $s_\alpha: \pi^{-1} (U_\alpha) \rightarrow V$  Furthermore, the $s_\alpha$'s satisfy the following compatibility condition on $\pi ^{-1} (U_{\alpha\beta})$: $ s_{\alpha}(x) = g_{\alpha \beta} (\pi(x)) \cdot s_\beta (x)$. One checks that the collection of holomorphic maps defined by $\sigma_\alpha : \pi ^{-1} (U_\alpha) \rightarrow U_\alpha \times V, x \mapsto (\pi(x), s_\alpha(x))$ define a $G$-equivariant holomorphic map $\sigma: X \to \mathcal{V}$.
\end{proof}
Since the bundle projection $\mathcal{V} \to X\hq G$ is $G$-invariant and affine, Corollary $5$ of \cite{BBExistenceOfQuotients} implies that the algebraic Hilbert quotient $\mathcal{V}\hq G$ exists. In fact, it is a Zariski-locally trivial fibre bundle over $X \hq G$ with fibre $V\hq G$.
\subsection{Extending maps from fibres to $X$}
First we note the following immediate consequence of the classical projection formula:
\begin{lemma}[Equivariant Projection Formula]\label{projection}
Let $X$ be a holomorphic $G$-space such that the analytic Hilbert quotient $\pi: X\rightarrow X\hq G$ exists. Let $V$ be a $G$-module, let $\mathscr{F}$ be the coherent analytic $G$-sheaf of germs of maps from $X$ to $V$, and let $\mathscr{L}$ be the locally free sheaf associated to a holomorphic line bundle $L$ on $X\hq G$. Then, we have $(\pi_*\mathscr{F})^G \tensor \mathscr{L} \cong (\pi_*(V \tensor \pi^*\mathscr{L}))^G$.
\end{lemma}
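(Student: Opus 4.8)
The plan is to deduce the statement from the classical projection formula by carefully tracking the $G$-actions and then passing to invariants. First I would identify the sheaf $\mathscr{F}$ of germs of maps $X \to V$ with $V \tensor_\C \mathscr{H}_X$, the $G$-action being the diagonal of the given action on $V$ and the action on $\mathscr{H}_X$ induced by the $G$-action on $X$. Under this identification one has, as sheaves of $\mathscr{H}_X$-modules,
\[\mathscr{F} \tensor \pi^*\mathscr{L} \;\cong\; (V \tensor_\C \mathscr{H}_X) \tensor_{\mathscr{H}_X} \pi^*\mathscr{L} \;\cong\; V \tensor \pi^*\mathscr{L},\]
and this isomorphism is $G$-equivariant. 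Thus it suffices to produce a $G$-equivariant isomorphism $\pi_*(\mathscr{F} \tensor \pi^*\mathscr{L}) \cong (\pi_*\mathscr{F}) \tensor \mathscr{L}$ and to show that it descends to invariants.

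The existence of the (non-equivariant) isomorphism $\pi_*(\mathscr{F} \tensor \pi^*\mathscr{L}) \cong (\pi_*\mathscr{F}) \tensor \mathscr{L}$ is exactly the classical projection formula, valid because $\mathscr{L}$ is locally free of finite rank ($L$ being a line bundle). The crucial point for the equivariant refinement is that $\mathscr{L}$ is a sheaf on the quotient $X\hq G$, on which $G$ acts trivially; correspondingly, as recorded in Section~\ref{sectionembedding}, the lifted $G$-action on $\pi^*L$ is the identity on fibres, i.e.\ $(x,w) \mapsto (g\acts x, w)$. Hence the tensor factor $\pi^*\mathscr{L}$ carries the \emph{trivial} equivariant structure, and the projection-formula isomorphism, which is $\mathscr{H}_X$-linear and built from this structure, is automatically $G$-equivariant, with $G$ acting on $\mathscr{L}$ trivially.

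It remains to take invariants. On a Zariski-open cover $\{U_\alpha\}$ of $X\hq G$ trivialising $L$, the sheaf $\mathscr{L}|_{U_\alpha}$ is free of rank one, so over $U_\alpha$ the right-hand side reduces to $(\pi_*\mathscr{F})^G|_{U_\alpha}$ and invariants visibly commute with the trivial tensor factor; the two sides glue by the transition functions $g_{\alpha\beta}$, which are regular functions on $U_{\alpha\beta} \subset X\hq G$ and hence $G$-invariant, so the gluing is compatible with passing to invariants. Taking $G$-invariants of the equivariant isomorphism above therefore yields
\[(\pi_*(V \tensor \pi^*\mathscr{L}))^G \;\cong\; \bigl((\pi_*\mathscr{F}) \tensor \mathscr{L}\bigr)^G \;\cong\; (\pi_*\mathscr{F})^G \tensor \mathscr{L},\]
which is the assertion. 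The only genuine point to verify is the compatibility of the projection-formula isomorphism with the $G$-actions; once one observes that $G$ acts trivially on the base and hence on $\mathscr{L}$, this compatibility — together with the fact that invariants commute with $\tensor \mathscr{L}$ — is a formal consequence of the local triviality of $L$, which is why the result is essentially immediate.
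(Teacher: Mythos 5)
Your proof is correct and takes essentially the same route as the paper: the paper states Lemma~\ref{projection} without proof as an ``immediate consequence of the classical projection formula,'' and your argument simply spells out the routine verifications behind that claim, namely that $G$ acts trivially on $\mathscr{L}$ (so the projection-formula isomorphism is equivariant) and that taking invariants commutes with tensoring by a line bundle pulled back from the quotient.
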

From now on, we fix an ample line bundle $L$ on $X\hq G$. Serre
vanishing allows us  to extend holomorphic maps from the fibres of the
quotient map $\pi$ to
the whole space $X$ after twisting with an appropriate power of
$L$. More precisely, we have
\begin{prop}\label{extendingmaps}
 Let $A= \{ q_1, \dots ,q_N\}$ be a finite set of points in $X\hq G$. Let $F \definiere \pi^{-1} (A)$ be the corresponding collection of fibres of $\pi$. Assume that there exists an equivariant holomorphic map $\varphi$ of a $\pi$-saturated open neighbourhood $U$ of $F$ into a $G$-module $V$ that is an immersion at every point in $F$. Then, there exists $m \in \N$ and
an equivariant holomorphic map $\Phi: X \rightarrow \mathcal{V}(m)$
into the algebraic $G$-vector bundle $\mathcal{V}(m) \definiere  V
\tensor L^{\tensor m}$. The map $\Phi$ coincides with $\varphi$ on $F$ and is an immersion at every point in $F$.
\end{prop}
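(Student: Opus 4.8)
The plan is to promote the locally-defined equivariant map $\varphi$ on the saturated neighbourhood $U$ to a global section of an invariant push-forward sheaf, and then apply Serre vanishing to extend it, trading regularity for a twist by a high power of the ample bundle $L$. First I would let $\mathscr{F}$ denote the coherent analytic $G$-sheaf of germs of holomorphic maps from $X$ to $V$, so that $(\pi_*\mathscr{F})^G$ is a coherent analytic sheaf on $X\hq G$ by the Roberts--Hausen--Heinzner theorem recalled in Section~\ref{section:coherent}. The given map $\varphi\colon U \to V$ is an element of $\mathscr{F}(\pi^{-1}(A'))^G = (\pi_*\mathscr{F})^G(A')$ for the open neighbourhood $A' \definiere \pi(U)$ of $A$ in $X\hq G$. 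The task is thus to extend this local section of the coherent sheaf $(\pi_*\mathscr{F})^G$ over the finite set $A$ to a global section, at the cost of twisting.

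The key tool is the Equivariant Projection Formula, Lemma~\ref{projection}, which identifies $(\pi_*\mathscr{F})^G \tensor \mathscr{L}^{\tensor m}$ with $(\pi_*(V \tensor \pi^* \mathscr{L}^{\tensor m}))^G$. Since $X\hq G$ is projective and $L$ is ample, Serre's vanishing and global-generation theorem applies to the coherent sheaf $\mathscr{G} \definiere (\pi_*\mathscr{F})^G$: for $m \gg 0$ the twisted sheaf $\mathscr{G}\tensor\mathscr{L}^{\tensor m}$ is globally generated and, more precisely, the restriction map
\[
H^0(X\hq G, \mathscr{G}\tensor\mathscr{L}^{\tensor m}) \longrightarrow (\mathscr{G}\tensor\mathscr{L}^{\tensor m})\big|_A
\]
onto the stalks over the finite set $A$ is surjective. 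Choosing a local trivialisation of $L$ near each $q_i$, the germ of $\varphi$ at the fibre $\pi^{-1}(q_i)$ determines an element of the stalk of $\mathscr{G}\tensor\mathscr{L}^{\tensor m}$ at $q_i$, and surjectivity produces a global section $s \in H^0(X\hq G, (\pi_*(V \tensor \pi^* \mathscr{L}^{\tensor m}))^G)$ agreeing with $\varphi$ to first order along each fibre in $F$. Feeding $s$ into Lemma~\ref{lemma:basicconstruction} yields the desired equivariant holomorphic map $\Phi\colon X \to \mathcal{V}(m) = V\tensor L^{\tensor m}$.

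It remains to verify that $\Phi$ has the required properties on $F$. That $\Phi$ coincides with $\varphi$ on $F$ is immediate from the matching of germs, once one normalises the chosen local trivialisation of $L^{\tensor m}$ to be the identity along the relevant fibres. The immersion property is where I expect the main obstacle to lie: being an immersion is an open condition on the first-order jet, so one must ensure that the global section $s$ agrees with $\varphi$ not merely in value but also in its differential at every point of $F$. I would handle this by strengthening the surjectivity statement to the level of first-order jets, i.e.\ applying Serre vanishing to the sheaf $\mathscr{G}\tensor\mathscr{L}^{\tensor m}$ together with its quotient by the square of the ideal sheaf of $F$ (equivalently, using that for $m\gg0$ the evaluation of sections and their first derivatives along the finitely many fibres is surjective). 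Since $\varphi$ is assumed to be an immersion at every point of $F$ and $F = \pi^{-1}(A)$ is compact (being a finite union of fibres over the projective, hence compact, quotient), matching the $1$-jet of $\Phi$ to that of $\varphi$ along $F$ forces $\Phi$ to be an immersion there as well, completing the argument.
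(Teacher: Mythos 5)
Your proposal is correct and follows essentially the same route as the paper's proof: push forward to the coherent sheaf $\mathscr{G}=(\pi_*\mathscr{F})^G$, identify $\mathscr{G}\tensor\mathscr{L}^{\tensor m}$ with $\pi_*(V\tensor\pi^*\mathscr{L}^{\tensor m})^G$ via the equivariant projection formula, and apply Serre vanishing to $\mathfrak{m}_A^2\,\mathscr{G}\tensor\mathscr{L}^{\tensor m}$ so that the extended section agrees with $\varphi$ up to second order along $F$, giving both the coincidence on $F$ and the immersion property. Your closing observation that one must match first-order jets, handled via the square of the ideal sheaf (of $A$ in $X\hq G$, rather than of $F$ in $X$, which is the only minor imprecision), is exactly the device the paper uses.
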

\vspace*{-3mm}
\begin{proof}
Let $\mathscr{F}$ be the sheaf of germs of holomorphic maps from $X$
into $V$. By the discussion in Section~\ref{section:coherent}, $\mathscr{G} \definiere \pi_*(\mathscr{F})^G$ is a coherent analytic
sheaf of $\mathscr{H}_{X/\negthickspace/G}$-modules on $X\hq G$. Let $\mathfrak{m}_A$ be the ideal sheaf of the
analytic set $A = \{q_1, \dots, q_N\}$ in  $\mathscr{H}_{X/\negthickspace/G}$. Since $\mathscr{L}$ is locally free, the
exact sequence $0 \rightarrow \mathfrak{m}_A^2\mathscr{G} \rightarrow \mathscr{G} \rightarrow \mathscr{G}/\mathfrak{m}_A^2\mathscr{G}\rightarrow 0$
for every $m \in \N$ induces an exact sequence
\begin{equation}\label{exact}
0 \rightarrow \mathfrak{m}_A^2\mathscr{G}\tensor \mathscr{L}^{\tensor m} \rightarrow \mathscr{G}\tensor \mathscr{L}^{\tensor m} \rightarrow \mathscr{G}/\mathfrak{m}_A^2\mathscr{G}\tensor \mathscr{L}^{\tensor m}\rightarrow 0.
\end{equation}
We note that for all $p \in X\hq G$, we have $\left( \mathscr{G}/\mathfrak{m}_A^2\mathscr{G}\tensor \mathscr{L}^{\tensor m}\right)_p \cong \left(\mathscr{G}/\mathfrak{m}_A^2\mathscr{G}\right)_p$. Hence, we get $\mathscr{G}/\mathfrak{m}_A^2\mathscr{G}\tensor \mathscr{L}^{\tensor m} \cong \mathscr{G}/\mathfrak{m}_A^2\mathscr{G}$. Furthermore, by the equivariant projection formula (Lemma \ref{projection}), $\mathscr{G} \tensor \mathscr{L}^{\tensor m}$ is isomorphic to $\pi_*(V \tensor \pi^* \mathscr{L}^{\otimes m})^G$. Hence, from the exact sequence \eqref{exact} we obtain
\begin{equation}\label{exact3}
 0 \rightarrow \mathfrak{m}_A^2\mathscr{G}\tensor \mathscr{L}^{\tensor m} \rightarrow \pi_*(V \tensor \pi^* \mathscr{L} ^{\tensor m})^G \rightarrow \mathscr{G}/\mathfrak{m}_A\mathscr{G} \rightarrow 0.
\end{equation}
By Serre vanishing, there exists an $m_0 \in \N$ such that $H^1(X\hq G, \mathfrak{m}_A^2\mathscr{G}\tensor \mathscr{L}^{\tensor m}) = 0$ for all $m \geq m_0$. Then, for $m\geq m_0$, the exact sequence \eqref{exact3} induces an exact sequence
\begin{equation}\label{exact2}
0 \rightarrow H^0(X\hq G, \mathfrak{m}_A^2\mathscr{G}\tensor \mathscr{L}^{\tensor m}) \rightarrow H^0(X\hq G, \pi_*(V \tensor \pi^* \mathscr{L} ^{\tensor m})^G)\overset{r}{\rightarrow} H^0( X\hq G, \mathscr{G}/\mathfrak{m}_A^2\mathscr{G})\rightarrow 0,
\end{equation}
where the map $r$ is given by restriction. The map $\varphi: U \rightarrow V$ defines $\varphi_q \in H^0(X\hq G,
\mathscr{G}/\mathfrak{m}_A^2\mathscr{G})$, and it follows from the exact sequence \eqref{exact2} that there exists a $\phi \in H^0(X\hq G, \pi_*(V \tensor \pi^*\mathscr{L}^{\tensor m})^G)$ such that $r(\phi) = \varphi_q$. Shrinking $U$ if necessary, we can assume that $L|_U$ is trivial and
hence that $\phi|_{\pi(U)}$ is given by a G-equivariant map from  $U$ to $V$. The difference $\phi|_{\pi(U)} - \varphi$ vanishes along $F$ up to second order. Lemma \ref{lemma:basicconstruction} yields a $G$-equi\-va\-riant holomorphic map $\Phi: X \rightarrow V \otimes L^{\tensor m}$. We note that by construction, $\Phi|_U: U \rightarrow \pi(U) \times V$ coincides with $\pi \times \varphi : U \rightarrow \pi(U) \times V$ up to second order along $F$.
\end{proof}
\subsection{The Embedding Theorem}\label{section:embeddingtheorem}
In this section, we will prove the first main result of this chapter: a holomorphic $G$-space with
projective algebraic quotient $X\hq G$ admits a $G$-equivariant embedding into a $G$-vector bundle
$\mathcal{V}$ over $X\hq G$. This is the analogue of Kodaira's embedding theorem in our context.

Let $G$ be a complex reductive Lie group and let $X$ be a holomorphic $G$-space. Let $G_x$, $G_y$ be two
isotropy subgroups for the action of $G$ on $X$. We define a preorder on the set of $G$-orbits in $X$
as follows: we say $G\acts x \leq G \acts y$ if and only if there exists an element $g \in G$ such that
$gG_yg^{-1} < G_x$. This preorder induces
an equivalence relation on the set of $G$-orbits in $X$ given as follows: $G\acts x \sim G \acts y$
if and only if there exists an element $g \in G$ such that $gG_yg^{-1} = G_x$. The equivalence class of an
orbit $G\acts x$ will be denoted by $\Type(G\acts x)$ or $\Type(G/G_x)$. We call $\Type(G/G_x)$ the
\emph{orbit type} of $x$. The preorder $\leq$ defined above for $G$-orbits induces a preorder on the
set of orbit types.

Two representations $\rho_i: H_i \rightarrow GL(V_i);\,i=1,2$ of two closed complex subgroups $H_1,
H_2$ of $G$ will be called \emph{$G$-isomorphic}, if there exists $g\in G$ and a linear
isomorphism $L: V_1 \rightarrow V_2$ such that
\[i)\; H_2 = g H_1 g^{-1}, \quad ii)\; L(\rho_1(h)\cdot v)= \rho_2(ghg^{-1})\cdot L(v).\]This is
the case if and only if the associated $G$-vector bundles $G \times_{H_i}V_i$ are $G$-isomorphic.
The equi\-va\-lence class of a representation $\rho:H \rightarrow GL(V)$ under this equivalence relation as well as the associated $G$-isomorphism class of $G$-vector bundles represented by $G \times_H V$
is called the \emph{slice type} of $\rho$ and denoted by $\Type_G(V,H)$.

A holomorphic $G$-space $X$ that admits an analytic Hilbert quotient will be called \emph{of finite
orbit type}, if the set $\{\Type(G\acts x) \mid x \in X,\; G\acts x \text{ is closed in }X \}$ is
finite. If the set of slice types $\{\Type_G(T_xX, G_x)\mid x \in X,\; G\acts x
\text{ is closed in } X \}$ is finite, we say that $X$ is \emph{of finite slice type}.

It follows from the holomorphic Slice Theorem that a holomorphic $G$-space with compact analytic Hilbert quotient is of finite orbit type and of finite slice type, see \cite{HeinznerEinbettungen}.

The next result is the crucial technical part of the Embedding Theorem:
\begin{lemma}\label{technical}
Let $X$ be a holomorphic $G$-space with analytic Hilbert quotient $\pi: X \to X\hq G$. Assume that $X\hq
G$ is projective algebraic. Then, for every $d$-dimensional analytic subset $A$ of $X\hq G$
the following holds:
\vspace{-2.5mm}
\begin{enumerate}
 \item There exists an analytic subset $\widetilde{A} \subset A$, $\dim \widetilde{A} < d$, an algebraic $G$-vector
bundle $\mathcal{V}$ over $X\hq G$, and an equivariant holomorphic map $\Phi: X \rightarrow
\mathcal{V}$ that is an immersion along $\pi^{-1} (A \setminus \widetilde{A})$.
 \item There exists an analytic subset $\check{A} \subset A$, $\dim \check{A} < d$, an algebraic $G$-vector
bundle $\mathcal{W}$ over $X\hq G$, and an equivariant holomorphic map $\Psi: X \rightarrow
\mathcal{W}$ whose restriction to every closed $G$-orbit in $\pi ^{-1} (A \setminus \check{A})$ is a
proper embedding.
\end{enumerate}
\end{lemma}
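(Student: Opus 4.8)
The plan is to treat both parts by the same mechanism: reduce to a single well-chosen fibre, build the desired equivariant map locally out of the Slice Theorem, globalise it with Proposition~\ref{extendingmaps}, and then push the locus of \emph{bad} points down to $X\hq G$ using that images of $G$-invariant analytic sets are analytic. Throughout I may assume $A$ irreducible of dimension $d$ (decompose $A$ into irreducible components, handle each $d$-dimensional one separately, absorb the lower-dimensional components into $\widetilde{A}$ resp.\ $\check{A}$, and form the direct sum of the resulting bundles). Since $X\hq G$ is projective, hence compact, $X$ is of finite orbit type and of finite slice type, and the slice type is constant along the strata of an analytic stratification of $X\hq G$ (see \cite{HeinznerEinbettungen}); intersecting with $A$ yields a dense Zariski-open $A^\circ\subseteq A$ of uniform slice type $\Type_G(T_xX,G_x)=(W_0,H)$ with $\dim(A\setminus A^\circ)<d$. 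I fix $q_0\in A^\circ$, the closed orbit $G\acts x_0\subseteq\pi^{-1}(q_0)$, and $H\definiere G_{x_0}$, which is reductive.

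For~(1) I first produce a local model. By the holomorphic Slice Theorem a $\pi$-saturated neighbourhood of $F\definiere\pi^{-1}(q_0)$ is $G$-equivariantly biholomorphic to a saturated subset of $G\times_H S$ with $S$ a Stein $H$-space and $x_0=[e,s_0]$. Equivariant embedding of Stein $H$-spaces gives an $H$-equivariant holomorphic immersion $h\colon S\to W'$ into an $H$-module; choosing (after enlarging if necessary) a $G$-module $V$ with $W'\subseteq V|_H$ and setting $\varphi([g,s])\definiere g\acts h(s)$ yields a well-defined $G$-equivariant map that is an immersion at every point of $F$. Proposition~\ref{extendingmaps}, applied to $\{q_0\}$, then furnishes $m\in\N$ and a global equivariant $\Phi\colon X\to\mathcal{V}\definiere V\tensor L^{\tensor m}$ agreeing with $\varphi$ on $F$, in particular an immersion along $F$. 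Let $B\subseteq X$ be the locus where $d\Phi$ fails to be injective; this is a $G$-invariant analytic set with $B\cap F=\emptyset$. By property~(4) of analytic Hilbert quotients $\pi(B)$ is analytic in $X\hq G$, and $q_0\notin\pi(B)$, so $A\cap\pi(B)$ is a proper analytic subset of the irreducible $A$. Setting $\widetilde{A}\definiere(A\setminus A^\circ)\cup(A\cap\pi(B))$ we have $\dim\widetilde{A}<d$, and for $q\in A\setminus\widetilde{A}$ the purely set-theoretic equivalence $q\notin\pi(B)\Leftrightarrow\pi^{-1}(q)\cap B=\emptyset$ shows that $\Phi$ is an immersion along $\pi^{-1}(q)$. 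This proves~(1).

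For~(2) I build the local map to be simultaneously an immersion along $F$ and a proper embedding on the closed orbit. Because $H$ is reductive, $G/H$ is affine and embeds $G$-equivariantly as a closed orbit into a $G$-module $W_1$; taking $W\definiere W_1\oplus V$ I obtain a local equivariant $\psi$ that is an immersion along $F$ and restricts to a proper embedding on $G\acts x_0$. Proposition~\ref{extendingmaps} again globalises $\psi$ to $\Psi\colon X\to\mathcal{W}\definiere W\tensor L^{\tensor m}$ coinciding with $\psi$ on $F$; in particular $\Psi|_{G\acts x_0}$ is a proper embedding. It remains to see that the set of $q\in A^\circ$ for which $\Psi$ fails to restrict to a proper embedding on the closed orbit over $q$ is contained in a proper analytic subset. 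Working in the uniform slice model over $A^\circ$, the closed orbits fit into a family with a local holomorphic section $q\mapsto x_q$, and $\Psi|_{G\acts x_q}$ is a proper embedding precisely when $G_{\Psi(x_q)}=G_{x_q}$ and $G\acts\Psi(x_q)$ is closed in the fibre of $\mathcal{W}$. The first condition fails only where the rank of the infinitesimal action drops or a finite stabiliser enhancement occurs---an analytic condition---and the second fails only on a proper analytic set, since closedness of the image orbit is an open condition in the family and holds at $q_0$. Taking $\check{A}$ to be the union of $A\setminus A^\circ$ with these two loci gives $\dim\check{A}<d$ with the required property.

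The main obstacle is the properness statement in~(2): unlike the injectivity and immersion conditions, which descend immediately from a $G$-invariant analytic degeneracy locus on $X$ as in~(1), \emph{the image of the closed orbit over $q$ is closed} is not the trace of an obviously analytic $G$-invariant subset of $X$, since the set of points lying on closed orbits is not analytic. The finiteness of slice type is exactly what repairs this: over $A^\circ$ the closed orbits vary in a single uniform family, and in this family both defining conditions of a proper embedding become analytically closed and fail only on a proper subset. Verifying these analyticity and openness claims inside the slice model is the technical heart of the argument.
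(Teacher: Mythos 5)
Your part~(1) follows the paper's argument in all essentials (pick a point in each top-dimensional component, build a local equivariant immersion from the Slice Theorem, globalise with Proposition~\ref{extendingmaps}, and push the $G$-invariant non-immersion locus down to an analytic subset of $X\hq G$ missing the chosen points), but the local model you write down has a slip: the map $\varphi([g,s]) \definiere g\acts h(s)$ into a single $G$-module $V\supseteq W'$ is \emph{not} an immersion in general. Its differential at $[e,s_0]$ kills the directions $\mathfrak{g}_{h(s_0)}/\mathfrak{h}$, which are nonzero whenever the $G$-stabiliser of $h(s_0)$ is strictly larger than $H$ (e.g.\ if $h(s_0)=0$); the orbit directions of $G\times_H S$ get collapsed. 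The repair is exactly what you do in part~(2), and what the paper does: adjoin an embedding of $G/H$ (equivalently, embed the affine $G$-variety $G\times_H W'$ itself into a $G$-module), so that the $\mathfrak{g}/\mathfrak{h}$-directions are seen by the first factor.

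The genuine gap is in part~(2), at the step you yourself flag as the technical heart. Reducing ``proper embedding on the closed orbit over $q$'' to the two conditions $G_{\Psi(x_q)}=G_{x_q}$ and closedness of $G\acts\Psi(x_q)$ is correct, but your justification that these fail only on a proper \emph{analytic} subset of $A$ does not hold up. ``Closedness of the image orbit is an open condition in the family and holds at $q_0$'' is not enough: openness of the good locus does not make the bad locus analytic --- this paper itself (Example~\ref{counterexampledense}) exhibits a $G$-invariant open set whose complement is not analytic, and the set of points on closed orbits is in general neither open nor analytic. Moreover, the ``uniform family with a holomorphic section $q\mapsto x_q$ over $A^\circ$'' is not furnished by the holomorphic Slice Theorem, which only describes a saturated neighbourhood of a \emph{single} fibre. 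The paper's proof supplies precisely the mechanism you are missing: it orders the components of $A$ inductively by \emph{maximal} orbit type, maps the closed orbits of maximal type onto closed orbits $G\cdot w_p$ inside a $G$-module $W_\alpha$, uses that an equivariant map of orbits can only decrease orbit type, and then invokes Luna's Slice Theorem \cite{LunaSlice} \emph{in the target} $\mathcal{W}_\alpha$ to conclude that $\mathcal{W}_\alpha' \definiere \pi_{\mathcal{W}_\alpha}^{-1}\left(\{q \mid \Type(q) < \Type(z) \text{ for some } z\in D_\alpha\}\right)$ is an algebraic, $\pi_{\mathcal{W}_\alpha}$-saturated subset. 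If a closed orbit of $X$ lands outside $\mathcal{W}_\alpha'$, saturation forces its image orbit to be closed (otherwise the closed orbit in its closure would have strictly smaller type and lie in $\mathcal{W}_\alpha'$) and of equal type, hence the restriction of $\Psi_\alpha$ to it is a proper embedding; and the bad set $\check{A}^\alpha = \pi(\Psi_\alpha^{-1}(\mathcal{W}_\alpha'))$ is analytic \emph{for free}, being the image of a $G$-invariant analytic set. Without this transfer of the problem into the algebraic target (or some substitute for it), your dimension bound $\dim\check{A}<d$ is not established.
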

\vspace*{-3mm}
\begin{proof}The proof goes along the lines of the proof of the corresponding result in the Stein
case (see \cite{HeinznerEinbettungen}).
Without loss of generality, we can assume that $A$ is pure-dimensional. Let $A_i$, $i \in I$ be the
irreducible components of $A$.

$(1)$ For every $i\in I$, choose a point $p_i \in A_i \setminus
   \bigcup_{i \neq j}A_j$. The set $D = \{p_i \mid i \in I\}$ is discrete,
   hence finite and analytic in $A$. For every $p \in D$, the fibre $\pi^{-1}(p)$
   contains exactly one closed orbit $G\acts x$ and we set the
   \emph{slice type of $p$} to be the slice type of $\rho_x: G_x \rightarrow GL(T_xX)$. The set $D$
is a finite union of pairwise disjoint analytic sets $D_\alpha$ such that each $D_\alpha$ contains
points of only one particular slice type. Hence, $A$ is the finite union $A = \bigcup_\alpha
A(\alpha)$ where $A(\alpha)$ is the union of those irreducible components of $A$ that contain a
point of $D_\alpha$.

For fixed $\alpha$,  choose a representative $G \times_{H_\alpha} T_\alpha$ of the slice type
of $D_\alpha$. Since $G\times_{H_\alpha}T_\alpha$ is an affine $G$-variety, there exists a proper equivariant embedding of $G\times_{H_\alpha}T_\alpha$ into a
complex $G$-module $V_\alpha$. Since $D_\alpha$ is finite, there exists a covering $\{U_{\alpha s}\}$ of $D_\alpha$ by open subsets $U_{\alpha s}$ of $X$, such that $U_{\alpha s}\cap U_{\alpha t} =
\emptyset$ for $s\neq t$. If we choose the $U_{\alpha s}$ small enough, by the holomorphic Slice Theorem,
there exists an equivariant holomorphic embedding of $\pi^{-1} (U_{\alpha s})$ into a
saturated open subset of the $G$-module $V_\alpha$. By Proposition \ref{extendingmaps}, we get an
equivariant holomorphic map $\Phi_\alpha: X \rightarrow \mathcal{V}_\alpha$ that is an immersion
at every point in $\pi^{-1} (D_\alpha)$. The set $R_\alpha \definiere \{x \in X \mid \Phi_\alpha \text{ is not an immersion in }x\}$ is a
$G$-invariant analytic subset of $X$. Furthermore, we consider the analytic subset $\widetilde{A} \definiere \bigcup_\alpha \left(\pi(R_\alpha) \cap A(\alpha)  \right)$ of $A$. Since the map $\Phi_\alpha$ is an immersion at every point in $\pi^{-1}(D_\alpha)$,
we conclude that $\dim \widetilde{A} < d = \dim A$. The product map
$\Phi \definiere \bigoplus_\alpha \Phi_\alpha: X \longrightarrow  \bigoplus_\alpha
\mathcal{V}_\alpha$
into the $G$-product $\bigoplus_\alpha
\mathcal{V}_\alpha \rdefiniere \mathcal{V}$ of the $\mathcal{V}_\alpha$ is an immersion at every point in $\pi^{-1} (A \setminus
\widetilde{A})$.

$(2)$ For $p \in X\hq G$, let $\Type (p)$ be the orbit type of the uniquely determined closed orbit in
$\pi^{-1} (p)$ and for any subset $Y \subset X\hq G$, let $\Type Y \definiere \{\Type(p) \mid p \in
Y\}$. Note that $\Type(X\hq G)$ is finite, since $X\hq G$ is compact. For any
$Y\subset X\hq G$, we have $\Type Y \subset \Type (X\hq G)$, which implies that $\Type Y$ is
finite.

Inductively, we define the following finite collection of analytic subsets $E_\alpha \subset A$ with
$\bigcup_\alpha E_\alpha = A$:
set $I_0 \definiere I$ and for $I_\alpha \subset I$, set $A(\alpha) \definiere \bigcup_{i \in
I_\alpha} A_i$. Furthermore, we define
\begin{align*}
 \mathcal{H}_\alpha &\definiere \left\{\Type (p) \mid \Type (p) \text{ is maximal in } \Type
(A(\alpha))\, (\text{w.r.t.\ } \leq)\right\} \\
 J_\alpha           &\definiere \{j \in I_\alpha \mid \Type A_j \cap \mathcal{H}_\alpha \neq
\emptyset\}\\
 E_\alpha           &\definiere \bigcup_{j \in J_\alpha} A_j
\end{align*}
and finally, we set $I_{\alpha +1} \definiere I_\alpha \setminus J_\alpha$.

Since $\Type A$ is finite, this procedure stops after finitely many steps. In $E_\alpha$ we choose a
finite set $D_\alpha$ with the following two properties:
\[i)\; \Type (p) \in \mathcal{H}_\alpha \text{ for all }p \in D_\alpha \quad \quad
 ii)\; D_\alpha \cap A_j \neq \emptyset \text{ for all }j \in J_\alpha.
\]
We claim that there exists a $G$-module $W_\alpha$ such that every orbit type $\Type (p) \in
\mathcal{H}_\alpha$ is represented by a closed orbit $G\cdot w_p$ in
$W_\alpha$. Indeed, any closed $G$-orbit $G \acts x$ in $X$ naturally carries the structure
of an affine algebraic $G$-variety, since $G_x$ is reductive. Hence, it can be $G$-equivariantly embedded as a closed orbit in a
$G$-module $V$. Since $\mathcal{H}_\alpha \subset \Type A$ is finite, we can find a $G$-module $W_\alpha$ with the desired properties.

For every $p \in D_\alpha$, the closed orbit $T(p)$ in $\pi^{-1} (p)$
is equivariantly biholomorphic to $G\cdot w_p \subset
W_\alpha$. Hence, we have a $G$-equivariant holomorphic map from the
disjoint union of the $T(p)$, $p\in D_\alpha$ into $W_\alpha$. This
map extends to a $G$-equivariant holomorphic map from a saturated
Stein neighbourhood of $\pi^{-1} (D_\alpha)$ into $W_\alpha$ (see
\cite{HeinznerEinbettungen}, 1.\ Prop.\ 1). Hence, Proposition
\ref{extendingmaps}
yields a $G$-equivariant holomorphic map $\Psi_\alpha: X \rightarrow
\mathcal{W}_\alpha$ into an algebraic $G$-vector bundle
$\mathcal{W}_\alpha$ over $X\hq G$ with the pro\-perty that
$\Psi_\alpha(T(p)) = G\cdot w_p \subset (\mathcal{W}_\alpha)_p \cong W_\alpha$ for all $p\in
D_\alpha$.

Note that if there exists a $G$-equivariant map $\varphi: G\acts x
\rightarrow G\acts y$ between two $G$-orbits in $G$-spaces, then $\Type(G\acts y) \leq \Type( G\acts x)$. Therefore, for every closed orbit $G\acts y \subset X$ with $q \definiere \pi(y) \in E_\alpha$, we have
\begin{equation}\label{A}
\Type(G\cdot \Psi_\alpha(y)) \leq \Type (G\acts y) = \Type (q) \leq \Type(p) = \Type (G\cdot w_p) \tag*{($\ast$)}
\end{equation}
for some $p \in \mathcal{H}_\alpha$.

By Corollary $5$ of \cite{BBExistenceOfQuotients}, the algebraic Hilbert quotient $\pi_\alpha: \mathcal{W}_\alpha \to \mathcal{W}_\alpha\hq G$ exists.

\textbf{Claim:} In $\mathcal{W}_\alpha$ there exists a $\pi_{\mathcal{W}_\alpha}$-saturated analytic
subset $\mathcal{W}_\alpha '$ with the following two properties:
\vspace{-2.5mm}
\begin{enumerate}
 \item[(a)] $G\acts \Psi_\alpha(p) \subset \mathcal{W}_\alpha \setminus \mathcal{W}_\alpha
'\quad \text{ for all }p \in X\hq G$ with $\Type(p) \in
 \mathcal{H}_\alpha$
 \item[(b)] If $w \in \mathcal{W}_\alpha \setminus \mathcal{W}_\alpha '$ and $\Type(G\acts w) \leq
\Type(G\cdot w_p)$ for some $p \in D_\alpha$, then $\Type (G\acts w) = \Type (G\cdot w_p)$ and
$G\acts w$ is closed in $\mathcal{W}_\alpha$.
\end{enumerate}
Assuming the claim, we finish the proof of the lemma as follows: set $\check{A}^\alpha \definiere
\pi(\Psi_\alpha^{-1} (\mathcal{W}_\alpha '))$ and, in the setup of \ref{A}, let $G \acts y$ be a closed orbit such that in addition $\pi(y) \in E_\alpha \setminus \check{A}^\alpha$ holds. Then, \ref{A} shows that
$\Type (G\acts \Psi_\alpha(y)))\leq \Type (G\acts w_p)$ for some $p \in D_\alpha$ and $\Psi_\alpha
(y) \in \mathcal{W}_\alpha \setminus \mathcal{W}_\alpha '$. Hence, by part (b) of the claim, it
follows that $\Type (G\acts \Psi_\alpha(y)) = \Type (G\acts w_p)$ and $G\acts \Psi_\alpha(y)$ is
closed in $\mathcal{W}_\alpha$. This implies that $\Psi_\alpha|_{G\acts y }$ is a proper embedding.
From part (a) it follows that for every $j \in J_\alpha$, the set $A_j$ contains a point of $E_\alpha
\setminus \check{A}^\alpha$. Hence, $\dim \check{A}^\alpha < d$.

We define $\check{A} \definiere \bigcup_\alpha \check{A}^\alpha$ and consider the map
$\Psi \definiere \bigoplus_\alpha \Psi_\alpha: X \longrightarrow
\bigoplus_\alpha \mathcal{W}_\alpha \definiere \mathcal{W}$. We claim
that $\Psi$ has the desired properties. Indeed, let $G\acts x$ be a
closed orbit in the set $\pi^{-1} (A \setminus \check{A})$. Then, $\pi(x) \in
E_\alpha \setminus \check{A}^\alpha$ for some $\alpha$ and the restriction
of $\Psi_\alpha: X \rightarrow \mathcal{W}_\alpha$ to $G\acts x$ is a
proper embedding. We have $\Psi_\alpha(G\acts x) \subset
(\mathcal{W}_\alpha)_{\pi(x)}\cong W_\alpha$. It follows from \cite{HeinznerGIT}, Lemma 6.2, that the restriction of $\Psi$ to $G \acts x$ is a proper embedding.

It remains to prove the claim. We omit the index $\alpha$. Since the quotient
$\pi_{\mathcal{W}}: \mathcal{W} \rightarrow \mathcal{W}\hq G$ is locally the
quotient of an affine $G$-variety, it follows from Luna's Slice Theorem \cite{LunaSlice} that
the set
\[(\mathcal{W}\hq G)' \definiere \{q \in \mathcal{W}\hq G  \mid \Type(q) < \Type(z) \text{ for some
} z \in D\}\]
is an algebraic subset of $\mathcal{W}\hq G$. Let $\mathcal{W}'\definiere \pi^{-1}((\mathcal{W}\hq G)')$. Since $\Type (G\cdot w_p) = \Type (p)$ for all $p$ with $\Type(p) \in
\mathcal{H}$, the set $\mathcal{W}'$
has property (a). It remains to show property (b): let $w \in
\mathcal{W}\setminus \mathcal{W}'$ such that $\Type(G\acts w) \leq
\Type(G\cdot w_p)$ for some $p$ with $\Type(p) \in \mathcal{H}$. Suppose that $G\acts
w$ is not closed. Then there exists a unique closed $G$-orbit $G\acts
\widetilde{w} \subset \overline{G\acts w}$. Since the $G$-action on $\mathcal{W}$ is algebraic, we have
$\Type(G\acts \widetilde{w}) <
\Type(G\acts w)\leq \Type(G\cdot w_p)$. This implies $G\acts \widetilde{w}
\subset \mathcal{W}'$. However, this is a contradiction to the fact that $\mathcal{W}\setminus
\mathcal{W}'$ is $\pi_{\mathcal{W}}$-saturated. Hence, $G\acts w$ is closed. The definition of
$(\mathcal{W}\hq G)'$ now implies that $\Type(G\acts w) = \Type(G\cdot w_p)$. This shows the claim
and completes the proof of the lemma.
\end{proof}
In addition to Lemma \ref{technical}, the holomorphic version of Luna's fundamental lemma (see
\cite{HeinznerEinbettungen}, 2.\ Prop.\ 2,  and \cite{Snow}, Prop.\ 5.1) is an essential ingredient of the proof of the Embedding Theorem. For convenience of the reader we recall the statement.
\begin{lemma}[\cite{HeinznerEinbettungen}]\label{fundamental}
Let $X,Y$ be holomorphic $G$-spaces such that the analytic Hilbert quotients $\pi_X: X
\rightarrow X\hq G$ and $\pi_Y: Y \rightarrow Y\hq G$ exist. Let $x \in X$ be a point such that the
$G$-orbit $G\acts x$ through $x$ is closed. Let $\phi: X \rightarrow Y$ be an equivariant
holomorphic map with the following properties:
\vspace{-1mm}
\begin{enumerate}
 \item $\phi$ is an immersion at $x$, and
 \item $\phi|_{G\acts x}: G\acts x \rightarrow Y$ is a proper embedding.
\end{enumerate}
Then, given an open neighbourhood $U$ of $x$, there exists an open $\pi_X$-saturated neighbourhood $T$ of $x$ in
$X$ and a $\pi_Y$-saturated open subset $T'$ of $Y$ such that $\pi_X(T) \subset \pi_X(U)$, $\phi(T)
\subset T'$ and such that $\phi|_T:T \rightarrow T'$
is a proper holomorphic embedding.
\end{lemma}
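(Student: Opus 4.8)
The plan is to recognise this as the holomorphic analogue of Luna's fundamental lemma and to follow the slice-theoretic strategy: descend $\phi$ to the quotients, linearise near the distinguished closed orbit via the holomorphic Slice Theorem, and then upgrade a local/fibrewise embedding to a genuinely proper embedding on a saturated neighbourhood using the Stein nature of analytic Hilbert quotients.

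First I would descend $\phi$. Since $\pi_Y$ is $G$-invariant and $\phi$ is equivariant, the composition $\pi_Y \circ \phi$ is $G$-invariant, so by the universal property of $\pi_X$ it factors as $\pi_Y \circ \phi = \widebar\phi \circ \pi_X$ for a holomorphic map $\widebar\phi: X\hq G \to Y\hq G$. Because $G\acts x$ is closed, the stabiliser $H \definiere G_x$ is reductive; equivariance of $\phi$ gives $H \subset G_{\phi(x)}$, and injectivity of the proper embedding $\phi|_{G\acts x}$ forces $G_{\phi(x)} = H$, so $\phi(G\acts x) = G\acts\phi(x)$ is again closed with the same reductive stabiliser. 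Applying the holomorphic Slice Theorem (cf.\ \cite{HeinznerGIT}) at $G\acts x$ and at $G\acts\phi(x)$, I would replace $X$ and $Y$ by $\pi_X$- resp.\ $\pi_Y$-saturated neighbourhoods that are $G$-equivariantly biholomorphic to $G\times_H S$ and $G\times_H S'$, where $S, S'$ are Stein $H$-slices through $x$ and $\phi(x)$ and these points become $H$-fixed. Compatibility of $\phi$ with the quotient maps then shows that $\phi$ is induced by an $H$-equivariant holomorphic map $\psi: S \to S'$ which is an immersion at the fixed point $x$. Since inducing up along $G\times_H(-)$ preserves both being saturated and being a proper embedding, the claim reduces to producing an $H$-saturated neighbourhood of $x$ in $S$ on which $\psi$ is a proper $H$-equivariant embedding.

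In the slice models I would argue in two complementary directions. On the quotient side, the immersion hypothesis means $d\psi_x$ is an injective morphism of $H$-modules; together with the reductivity of $H$ this shows that $\psi$ separates the closed $H$-orbits lying near $x$, so that near $\pi_S(x)$ the induced map $\widebar\psi: S\hq H \to S'\hq H$ maps a neighbourhood biholomorphically onto a locally closed analytic subset of $S'\hq H$. On the fibre side, the central fibre $\pi_S^{-1}(\pi_S(x)) = \{z \in S \mid x \in \overline{H\acts z}\}$ is carried by $\psi$ into the analogous fibre over $\psi(x)$; its unique closed orbit is the fixed point $\{x\}$, at which $\psi$ is an immersion and on which $\psi$ restricts to a (trivially proper) embedding. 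Using the uniqueness of the closed orbit in each fibre of an analytic Hilbert quotient, I would deduce that $\psi$ is injective and immersive along the central fibre, and hence, by openness of these conditions, along all fibres over a small neighbourhood of $\pi_S(x)$. Combining the embedding of $\widebar\psi$ with this fibrewise control exhibits $\psi$ as injective and immersive on a saturated neighbourhood $T$ of $x$.

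The last and most delicate point is \emph{properness}: unlike the algebraic fundamental lemma, where properness is automatic from finiteness, in the analytic category one must control the map near the boundary of the saturated set. Here I would invoke that analytic Hilbert quotients are locally Stein: choosing a Stein neighbourhood of $\pi_S(x)$ over which $\widebar\psi$ is a closed embedding onto its image, and passing to the $\pi_{S'}$-saturated preimage in $S'$, makes the restriction $T \to T'$ proper over the base; since $\psi$ is already injective and immersive on $T$, the map $\psi|_T: T \to T'$ is then a proper holomorphic embedding onto a $\pi_{S'}$-saturated subset. Finally, inducing back up along $G\times_H(-)$ yields the required $\pi_X$-saturated neighbourhood $T$ of $x$, the $\pi_Y$-saturated open subset $T'$, and the proper embedding $\phi|_T: T \to T'$, with the condition $\pi_X(T) \subset \pi_X(U)$ secured by shrinking at the start. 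I expect the passage from a local, fibrewise embedding to a genuinely proper embedding on a saturated neighbourhood to be the main obstacle, precisely the step where the Stein structure of the quotient maps is indispensable.
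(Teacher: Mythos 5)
Your proposal cannot be compared with an internal argument, because the paper offers none: Lemma~\ref{fundamental} is quoted verbatim from the literature (\cite{HeinznerEinbettungen}, 2.~Prop.~2, and \cite{Snow}, Prop.~5.1) precisely because its proof is long and delicate. Judged on its own terms, your plan (slice theorem, descent to the quotients, fibrewise control, properness last) is the right family of ideas, but the steps that carry all of the difficulty are asserted rather than proved, and the central one is asserted circularly. In your middle paragraph you claim that injectivity of $d\psi_x$ together with reductivity of $H$ ``shows that $\psi$ separates the closed $H$-orbits lying near $x$'', that uniqueness of the closed orbit in each fibre gives injectivity and immersivity of $\psi$ along the whole central fibre $\pi_S^{-1}(\pi_S(x))$, and that these properties then spread ``by openness'' to nearby fibres. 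None of this follows from the stated reasons: the central fibre is a positive-dimensional, in general non-compact analytic set whose unique closed orbit is the single point $x$; an immersion hypothesis at $x$ controls $\psi$ only on a relatively compact neighbourhood of $x$, and neither injectivity nor immersivity propagates along a non-compact set by any openness argument --- compactness is exactly what such arguments require. The assertion that pointwise data at the closed orbit forces embedding behaviour along the entire fibre, and then on a whole saturated neighbourhood, is not an auxiliary observation: it \emph{is} the fundamental lemma, and in the cited proofs it is obtained by finiteness and integral-dependence arguments for invariants, not by local analysis near $x$.

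Two further gaps. First, properness: even granting that $\widebar\psi$ is a closed embedding near $\pi_S(x)$ and that $\psi$ is injective and immersive on $T=\pi_S^{-1}(\Omega)$, properness does not follow. If $(s_n)\subset T$ with $\psi(s_n)\to y$, you only conclude that $\pi_S(s_n)$ converges in $\Omega$; since $\pi_S^{-1}(K)$ is non-compact for compact $K$ (the fibres of an analytic Hilbert quotient are in general non-compact), the sequence $(s_n)$ may escape to infinity inside the fibres while its image converges. ``Properness over the base'' needs properness of $\psi$ on the fibres themselves, which is part of what must be proved, not a consequence of the Stein property of the quotient. Second, your reduction to slices: a $G$-equivariant map $G\times_H S\to G\times_H S'$ is \emph{not} induced by an $H$-equivariant map of slices; writing $\phi([e,s])=[\gamma(s),\sigma(s)]$ one must first straighten $\phi$ by the equivariant automorphism $[g,s]\mapsto[g\gamma(s),s]$, which requires an $\Ad(H)$-stable complement of $\mathfrak{h}$ in $\Lie(G)$ and is available only on an $H$-invariant neighbourhood of $x$ in $S$. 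Because the fibre through $x$ is non-compact, such a neighbourhood need not contain any $\pi_S$-saturated neighbourhood of $x$, so the reduction as stated already loses precisely the saturation that the conclusion of the lemma is about.
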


\begin{thm}[Embedding Theorem]\label{embedding}
Let $X$ be a holomorphic $G$-space with analytic Hilbert quotient $X\hq G$. Assume that
$X\hq G$ is projective algebraic. Then, there exists an equivariant proper holomorphic embedding
$\Phi:~X \rightarrow~\mathcal{V}$ of $X$ into an algebraic $G$-vector bundle $\mathcal{V}$ over $X\hq G$.
\end{thm}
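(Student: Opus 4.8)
The plan is to build the embedding $\Phi$ as a direct sum of finitely many of the equivariant maps produced by Lemma~\ref{technical}, organised by a descending induction on the dimension of the locus in $X\hq G$ over which a proper embedding has not yet been achieved. Since $X\hq G$ is projective algebraic, it is compact, so this induction terminates and only finitely many bundles occur. Recall that every map furnished by Lemma~\ref{lemma:basicconstruction}, and hence every map appearing below, commutes with the projection to $X\hq G$; this is the feature that reduces the global embedding property to a local, fibrewise one.

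First I would run the following inductive step. Given an analytic subset $A \subseteq X\hq G$ of dimension $d$ over which we do not yet control the map, apply part~(1) of Lemma~\ref{technical} to obtain an algebraic $G$-vector bundle $\mathcal{V}_A$ and an equivariant holomorphic map $\Phi_A \colon X \to \mathcal{V}_A$ that is an immersion along $\pi^{-1}(A \setminus \widetilde A)$, and part~(2) to obtain $\mathcal{W}_A$ and $\Psi_A \colon X \to \mathcal{W}_A$ restricting to a proper embedding on every closed $G$-orbit in $\pi^{-1}(A \setminus \check A)$. Put $A' \definiere \widetilde A \cup \check A$, so that $\dim A' < d$. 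The direct sum $\Phi_A \oplus \Psi_A \colon X \to \mathcal{V}_A \oplus \mathcal{W}_A$ is then simultaneously an immersion and a proper embedding on every closed orbit over $\pi^{-1}(A \setminus A')$. By the holomorphic version of Luna's fundamental lemma (Lemma~\ref{fundamental}), applied at each closed orbit $G\acts x$ with $\pi(x) \in A \setminus A'$, there is a $\pi$-saturated open set on which $\Phi_A \oplus \Psi_A$ is a proper embedding, and these cover $\pi^{-1}(A \setminus A')$. Thus after this step the uncontrolled locus has dropped to $A'$.

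Starting the induction with $A = X\hq G$ and iterating, the bad locus strictly decreases in dimension at each stage, so after at most $\dim (X\hq G) + 1$ steps it is empty. Let $\mathcal{V} \definiere \bigoplus_\alpha (\mathcal{V}_\alpha \oplus \mathcal{W}_\alpha)$ be the direct sum of all bundles produced and $\Phi \definiere \bigoplus_\alpha (\Phi_\alpha \oplus \Psi_\alpha) \colon X \to \mathcal{V}$ the corresponding equivariant holomorphic map into an algebraic $G$-vector bundle over $X\hq G$. Because a direct sum of equivariant maps is an immersion wherever any summand is and restricts to a proper embedding on a closed orbit whenever any summand does, $\Phi$ is an immersion at every point of $X$ and a proper embedding on every closed $G$-orbit of $X$.

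It remains to promote these pointwise and orbitwise properties to the global statement, and this is the step I expect to be the main obstacle. The key is that $\Phi$ commutes with the projection $p \colon \mathcal{V} \to X\hq G$, i.e.\ $p \circ \Phi = \pi$. Consequently points lying over different points of $X\hq G$ are automatically separated by $\Phi$, and injectivity only has to be checked within a single fibre of $\pi$. Applying Lemma~\ref{fundamental} once more at each point of $X\hq G$ yields a $\pi$-saturated open neighbourhood $U$ with $\Phi|_{\pi^{-1}(U)}$ a proper embedding onto a saturated subset of $\mathcal{V}$; by compactness of $X\hq G$ finitely many such $U$ suffice. Injectivity of $\Phi$ then follows, since $\Phi(x_1) = \Phi(x_2)$ forces $\pi(x_1) = \pi(x_2)$ and hence places $x_1, x_2$ in a common $\pi^{-1}(U)$ on which $\Phi$ is injective; properness follows because $\Phi$ is proper over each member of the finite saturated cover of the base. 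Together with the immersion property, this shows that $\Phi$ is a proper holomorphic embedding, completing the proof.
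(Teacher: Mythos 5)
Your construction of $\Phi$ is essentially the paper's: the paper runs two separate inductions (one with part~(1) of Lemma~\ref{technical}, producing a global immersion $\Psi$, one with part~(2), producing a map $\widetilde\Psi$ that is a proper embedding on every closed orbit) and then takes $\Psi \oplus \widetilde\Psi$, whereas you interleave the two parts in a single induction; this difference is immaterial. Your injectivity argument --- separation of distinct fibres via $p \circ \Phi = \pi$, injectivity within each fibre via Lemma~\ref{fundamental} --- is also exactly the paper's.

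The gap is your final clause: ``properness follows because $\Phi$ is proper over each member of the finite saturated cover of the base.'' This is a non sequitur. What Lemma~\ref{fundamental} gives you at $q \in X\hq G$ is a $\pi$-saturated open $T_q \supset \pi^{-1}(q)$ and a $\pi_{\mathcal{V}}$-saturated \emph{open} subset $T'_q \subset \mathcal{V}$ such that $\Phi|_{T_q}: T_q \to T'_q$ is a proper embedding; it does \emph{not} give properness of $\pi^{-1}(U_q) \to p^{-1}(U_q)$ for $U_q = \pi(T_q)$ (which would indeed suffice, since properness is local on the target and $\Phi^{-1}(p^{-1}(U_q)) = \pi^{-1}(U_q)$). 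Properness into an open subset does not imply properness into anything larger: the identity $(0,1) \to (0,1)$ is proper, while the inclusion $(0,1) \hookrightarrow \R$ is not, the point being that the image may fail to be closed. Concretely, take a sequence $(x_n)$ with $\Phi(x_n) \to v_0$ in $\mathcal{V}$; by compactness of $X \hq G$ you may assume $\pi(x_n) \to q_0$, so eventually $x_n \in T_{q_0}$ and $\Phi(x_n) \in T'_{q_0}$ --- but nothing in your argument prevents $v_0$ from lying on the boundary of $T'_{q_0}$, in which case the local properness of $\Phi|_{T_{q_0}}$ yields no convergent subsequence. Supplying this missing step is precisely what the second half of the paper's proof does: it passes to the induced map $\overline{\Phi}: X\hq G \to \mathcal{V}\hq G$, which is a proper embedding because $X\hq G$ is compact, deduces $\pi_{\mathcal{V}}(v_0) = \lim \overline{\Phi}(\pi(x_n)) = \overline{\Phi}(q_0) \in \pi_{\mathcal{V}}\bigl(\Phi(T_{q_0})\bigr) \subset \pi_{\mathcal{V}}(T'_{q_0})$, and only then invokes the $\pi_{\mathcal{V}}$-\emph{saturatedness} of $T'_{q_0}$ to conclude $v_0 \in T'_{q_0}$, after which the local properness applies. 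So the missing idea is the passage to the quotient-level map $\overline{\Phi}$ combined with the saturation of the target neighbourhoods; with that inserted, your outline closes.
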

\vspace*{-3mm}
\begin{proof}
Apply Lemma \ref{technical} to $A = X$ to produce an equivariant holomorphic map $\Psi_1: X
\rightarrow \mathcal{W}_1$ that is an immersion outside of an analytic set $\widetilde{A}$ of dimension less
than $\dim X$. Applying the lemma again to $A= \widetilde{A}$, we get a second map $\Psi_2: X \rightarrow
\mathcal{W}_2$. The product map $\Psi_1 \oplus \Psi_2: X \rightarrow \mathcal{W}_1
\oplus \mathcal{W}_2$ is an immersion outside an analytic set of dimension less than $\dim
\widetilde A$. Hence, after finitely many steps we obtain an equivariant holomorphic immersion
$\Psi: X \rightarrow \mathcal{W}$ into an algebraic $G$-vector bundle over $X\hq G$. Furthermore, by a repeated application of the second part of Lemma \ref{technical} we get an equivariant holomorphic map $\widetilde{\Psi}: X \rightarrow \widetilde{\mathcal{W}}$ into an algebraic $G$-vector bundle $\widetilde{\mathcal{W}}$
over $X\hq G$, whose restriction to every closed orbit in $X$ is a proper embedding.

Consider $\mathcal{V}\definiere \mathcal{W} \oplus
\widetilde{\mathcal{W}}$ and let $\Phi \definiere \Psi \oplus \widetilde\Psi: X \rightarrow
\mathcal{V}$ be the product map. Then, $\Phi$ is an immersion and its restriction to every closed
orbit in $X$ is a proper embedding. From Lemma
\ref{fundamental} it follows that the restriction of $\Phi$ to every fibre of $\pi: X\rightarrow
X\hq G$ is a proper embedding. Furthermore, by construction, $\Phi$ separates the fibres of $\pi$.
Hence, $\Phi$ is an injective immersion; it remains to show that it is proper.

Let $\overline{\Phi}: X \hq G \to \mathcal{V}\hq G$ be the holomorphic map induced by the $G$-invariant holomorphic map $\pi_{\mathcal{V}} \circ \Phi: X \to \mathcal{V}\hq G$, where $\pi_{\mathcal{V}}: \mathcal{V} \to \mathcal{V}\hq G$ is the algebraic Hilbert quotient. Since $X\hq G$ is compact, $\overline{\Phi}$ is a proper embedding. Let $(x_n)$ be a sequence in $X$ such
that $\Phi(x_n) \rightarrow v_0 \in \mathcal{V}$. Then, $\pi_{\mathcal{V}}(\Phi(x_n))
\rightarrow \pi_{\mathcal{V}}(v_0)$ in $\mathcal{V}\hq G$. Since $\overline{\Phi}$ is proper, without loss
of generality, $\pi(x_n)$ converges to a point $q_0$ in $X\hq G$ and $\overline{\Phi}(q_0) =
\pi_{\mathcal{V}}(v_0)$. Hence, given an arbitrary open neighbourhood $Q$ of $q_0$ in $X\hq G$,
without loss of generality we can assume that $x_n \in \pi^{-1}(Q)$ for all $n$. Choose $Q$ to be a
neighbourhood of the form $\pi(T)$ as given by Lemma \ref{fundamental}, and
let $T'$ be the corresponding $\pi_{\mathcal{V}}$-saturated neighbourhood of $v_0$ in
$\mathcal{V}$. It
follows that $\Phi(x_n) \in T'$ for all $n\in \N$. Since $\Phi|_T:
T\rightarrow T'$ is a proper embedding, the convergence of $\Phi(x_n)$ to $v_0$ in $T'$ implies
that a subsequence of
$(x_n)$ converges to some $x_0$ in $T \subset X$.
\end{proof}
\section{Algebraicity of spaces with projective algebraic
quotient}\label{sectionalgebraicityofspaces}
\subsection{The Algebraicity Theorem for invariant analytic subsets}
Let $Y$ be an algebraic $G$-variety with algebraic Hilbert quotient $\pi: Y \rightarrow Y\hq G$.
Assume that $Y\hq G$ is a projective algebraic variety. Let $X$ be a $G$-invariant analytic subset
of $Y$ such that $\pi(X)=Y\hq G$. Fix an ample line bundle $L$ on $Y\hq G$. Given a vector bundle $\mathcal{V}$ on a complex space $Z$ let $\mathcal{Z}_\mathcal{V} \subset \mathcal{V}$ denote the zero section.
\begin{lemma}\label{Bundleseparation}
For every $y \in Y \setminus X$, there exist a $G$-module $V$, $m \in \N$, and a $G$-equivariant
holomorphic map $\Phi: Y \rightarrow \mathcal{V}$ into the $G$-vector bundle $\mathcal{V} = V \tensor L^{\tensor m}$ such
that $\Phi(X) \subset \mathcal{Z}_\mathcal{V}$ and $\Phi(y) \notin \mathcal{Z}_\mathcal{V}$.
\end{lemma}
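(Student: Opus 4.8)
The plan is to produce the desired section by combining a fibrewise local separation with a global extension via Serre vanishing, carried out on the invariant push-forward of the subsheaf of those $V$-valued maps that already vanish on $X$. Write $Q \definiere Y\hq G$ and $q \definiere \pi(y)$. Since $\pi(X) = Q$ and, by property $(4)$ of analytic Hilbert quotients, $\pi|_X: X \to Q$ is itself an analytic Hilbert quotient, the unique closed $G$-orbit in the fibre $\pi^{-1}(q)$ is contained in $X$; as $y \notin X$, the point $y$ does not lie on this closed orbit although it lies in the same fibre, which is precisely why $y$ cannot be separated from $X$ by the quotient point alone. Fix a $G$-module $V$, let $\mathscr{F}$ be the coherent analytic $G$-sheaf of germs of holomorphic maps $Y \to V$, and let $\mathscr{F}_X \subset \mathscr{F}$ be the coherent $G$-subsheaf of germs vanishing on $X$. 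Then $\mathscr{G}_X \definiere (\pi_*\mathscr{F}_X)^G$ is coherent on $Q$ by the results of Roberts and Hausen--Heinzner, and the proof of the equivariant projection formula (Lemma \ref{projection}) applies verbatim to give $\mathscr{G}_X \tensor \mathscr{L}^{\tensor m} \cong (\pi_*(\mathscr{F}_X \tensor \pi^*\mathscr{L}^{\tensor m}))^G$. By Lemma \ref{lemma:basicconstruction}, a global section of the right-hand side yields a $G$-equivariant holomorphic map $\Phi: Y \to V \tensor L^{\tensor m}$ whose local components take values in $\mathscr{F}_X$, so that $\Phi(X) \subset \mathcal{Z}_{\mathcal{V}}$ holds automatically. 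It therefore suffices to find such a section whose associated map is non-zero at $y$.

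For the local construction, choose a Stein open neighbourhood $S$ of $q$ in $Q$ and set $U \definiere \pi^{-1}(S)$, a $\pi$-saturated Stein $G$-space containing the fibre $\pi^{-1}(q) \ni y$. The intersection $X \cap U$ is a $G$-invariant analytic subset of $U$ with $y \notin X \cap U$, so by Cartan's Theorems there is $f \in \mathscr{H}_Y(U)$ vanishing on $X \cap U$ with $f(y) \neq 0$. Since the subspace of such functions is closed and $G$-invariant, its $K$-isotypic projections preserve it; as $f(y) \neq 0$, some $K$-finite component $\tilde f$ in it satisfies $\tilde f(y) \neq 0$. The finite-dimensional $G$-submodule $V^* \subset \mathscr{H}_Y(U)$ generated by $\tilde f$ then consists of functions vanishing on $X \cap U$. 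Setting $V \definiere (V^*)^*$, the evaluation map $\varphi: U \to V$, $\varphi(z)(\ell) = \ell(z)$, is a $G$-equivariant holomorphic map vanishing on $X \cap U$ with $\varphi(y) \neq 0$; it defines a germ $\varphi_q \in \mathscr{G}_{X,q}$.

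To globalise, consider for the ideal sheaf $\mathfrak{m}_q$ of the point $q$ the exact sequence
\[0 \rightarrow \mathfrak{m}_q\mathscr{G}_X \tensor \mathscr{L}^{\tensor m} \rightarrow \mathscr{G}_X \tensor \mathscr{L}^{\tensor m} \rightarrow \mathscr{G}_X/\mathfrak{m}_q\mathscr{G}_X \rightarrow 0.\]
By Serre vanishing there is an $m_0$ with $H^1(Q, \mathfrak{m}_q\mathscr{G}_X \tensor \mathscr{L}^{\tensor m}) = 0$ for $m \geq m_0$, so the restriction map $H^0(Q, \mathscr{G}_X \tensor \mathscr{L}^{\tensor m}) \to (\mathscr{G}_X/\mathfrak{m}_q\mathscr{G}_X)_q$ is surjective. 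Trivialising $L$ near $q$ and lifting the class of $\varphi_q$, we obtain a global section $s$ and hence a $G$-equivariant map $\Phi: Y \to V \tensor L^{\tensor m} = \mathcal{V}$ with $\Phi(X) \subset \mathcal{Z}_{\mathcal{V}}$. Its germ at $q$ differs from $\varphi_q$ by an element of $\mathfrak{m}_q\mathscr{G}_{X,q}$; since any local section of $\mathfrak{m}_q\mathscr{G}_X$ is a finite sum $\sum_i h_i\psi_i$ with each $h_i \circ \pi$ vanishing on the whole fibre $\pi^{-1}(q)$, such an element vanishes at $y$. Hence $\Phi(y) = \varphi(y) \neq 0$, as required.

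The main obstacle is the tension between the two requirements: the extension procedure of Proposition \ref{extendingmaps} matches a prescribed map along fibres but does not by itself preserve vanishing along the global analytic set $X$, while non-vanishing at $y$ cannot be detected from the quotient point $q = \pi(y)$, since $y$ and points of $X$ share the fibre over $q$. Both difficulties are resolved simultaneously by encoding the vanishing condition inside $\mathscr{F}_X$ from the outset and by observing that the correction term introduced by the global lift lies in $\mathfrak{m}_q\mathscr{G}_X$ and therefore vanishes on the entire fibre, leaving the value at $y$ untouched.
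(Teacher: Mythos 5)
Your proof is correct and follows essentially the same route as the paper: form the coherent $G$-subsheaf of $V$-valued germs vanishing on $X$ (the paper writes it as $\mathscr{I}_X\cdot\mathscr{F}$), push it forward invariantly, kill the obstruction by Serre vanishing after twisting with $\mathscr{L}^{\tensor m}$, and convert the lifted section into a map $\Phi$ via Lemma~\ref{lemma:basicconstruction}. The only differences are expository: where the paper cites \cite{HeinznerEinbettungen}, 1.\ Bem.\ 2, for the fibrewise separating map, you reprove it directly (Cartan's theorems plus $K$-finiteness on a saturated Stein neighbourhood, which also makes the germ $\varphi_q$ immediately available), and you spell out the detail the paper leaves implicit, namely that the correction term in $\mathfrak{m}_q\mathscr{G}_X$ vanishes on the whole fibre $\pi^{-1}(q)$ and hence at $y$.
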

\vspace*{-3mm}
\begin{proof}
Let $y \in Y \setminus X$. Since $\pi(X) = Y\hq G$, the fibre of $\pi$ over $q \definiere \pi(y)$
intersects $X$ non-trivially. Set $A \definiere X \cap \pi^{-1}(q)$. Since $\pi^{-1}(q)$ is Stein, it follows from \cite{HeinznerEinbettungen}, 1.\ Bem.\ 2, that there exists a $G$-module $V$ and a
$G$-equivariant holomorphic map $\varphi: \pi^{-1}(q) \rightarrow V$ such that $\varphi|_A \equiv 0$
and $\varphi(y) \neq 0$. Consider the sheaf $\mathscr{F}$ of germs of holomorphic maps from $Y$ to
$V$ and let $\mathscr{G}\subset \mathscr{F}$ be the subsheaf of those germs that vanish along $X
\subset Y$, i.e., $\mathscr{G} = \mathscr{I}_X\cdot \mathscr{F}$, where $\mathscr{I}_X$ denotes the
ideal sheaf of $X$ in $\mathscr{H}_Y$. Since $X$ is $G$-invariant, $\mathscr{G}$ is
a coherent analytic $G$-sheaf on $Y$. The $G$-invariant push-forward $\widetilde{\mathscr{G}} \definiere
(\pi^h_*\mathscr{G})^G$ is a coherent analytic sheaf on $X\hq G = Y\hq G$. Let $\mathfrak{m}_q$ be
the ideal sheaf in $\mathscr{H}_{Y/\negthickspace/ G}$ of the point $q \in Y\hq G$. Let $m \in \N$ be chosen such that
$H^1(Y\hq G, \mathfrak{m}_q\cdot \widetilde{\mathscr{G}}\tensor \mathscr{L}^{\tensor
  m})=0$ (Serre vanishing). Then, the exact sequence $0 \rightarrow \mathfrak{m}_q \cdot  \widetilde{\mathscr{G}} \rightarrow \widetilde{\mathscr{G}}
\rightarrow \widetilde{\mathscr{G}}/\mathfrak{m}_q\cdot \widetilde{\mathscr{G}} \rightarrow 0$
induces an exact sequence on the level of sections
\[0 \rightarrow H^0(Y\hq G, \mathfrak{m}_q \cdot  \widetilde{\mathscr{G}}\tensor \mathscr{L}^{\tensor m})
\rightarrow H^0(Y\hq G, \widetilde{\mathscr{G}}\tensor \mathscr{L}^{\tensor m}) \rightarrow H^0(Y\hq G,
\widetilde{\mathscr{G}}/\mathfrak{m}_q\cdot \widetilde{\mathscr{G}}) \rightarrow 0. \]The map $\varphi$
defines an element in $H^0(Y\hq G, \widetilde{\mathscr{G}}/\mathfrak{m}_q\cdot \widetilde{\mathscr{G}})$ and
hence, there exists a section of $\widetilde{\mathscr{G}}\otimes L^{\otimes m}$ that coincides with
$\varphi$ at $q$. Lemma \ref{lemma:basicconstruction} now yields a $G$-equivariant holomorphic map $\Phi: Y \rightarrow V \tensor L^{\otimes m}$ with $\Phi(X) \subset \mathcal{Z}_{\mathcal{V}}$ and $\Phi(y) \notin \mathcal{Z}_{\mathcal{V}}$.
\end{proof}
In the setup of the previous lemma, if $\Phi(X) \subset \mathcal{Z}_\mathcal{V}$, we say that $\Phi$ \emph{vanishes on $X$}. Analogously, if $\Phi(y) \in \mathcal{Z}_\mathcal{V}$, we write $\Phi(y) = 0$.

Next, we need to compare coherent algebraic and coherent analytic sheaves on algebraic varieties. For this, we introduce some notation.  If $\mathscr{F}$ is any sheaf on an algebraic variety $Y$ we define a new sheaf $\mathscr{F}'$ on $Y^h$: for an open subset $U \subset Y$ we define $\mathscr{F}' (U) \definiere \underset{\rightarrow}{\lim}\{\mathscr{F}(W) \mid W
\text{ Zariski-open in $Y$ containing $U$}\}$.
If $\mathscr{F}$ is any algebraic sheaf on the algebraic variety $Y$, then we define
a corresponding analytic sheaf $\mathscr{F}^h$  on $Y^h$ by $\mathscr{F}^h \definiere \mathscr{H}_Y
\tensor_{\mathscr{O}'_Y} \mathscr{F}'$.

\begin{lemma}\label{equivariantalgebraic}
The holomorphic map $\Phi: Y \rightarrow \mathcal{V}$ constructed in the
previous lemma is algebraic.
\end{lemma}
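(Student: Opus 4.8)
The plan is to realise $\Phi$ as arising from a global section, over the \emph{projective} quotient $Y\hq G$, of the analytification of a coherent \emph{algebraic} sheaf, and then to invoke GAGA to force that section—and hence $\Phi$—to be algebraic. First I would recall how $\Phi$ was produced: by Lemma~\ref{lemma:basicconstruction} it is determined by the section $s \in H^0\bigl(Y\hq G, \pi_*(V \tensor \pi^*\mathscr{L}^{\tensor m})^G\bigr)$ obtained in the proof of Lemma~\ref{Bundleseparation}. The vanishing of $s$ along $X$ plays no role for algebraicity, so I regard $s$ as a section of the \emph{full} invariant push-forward rather than of the subsheaf $\widetilde{\mathscr{G}}\tensor\mathscr{L}^{\tensor m}$; this is the essential point, since $X$ is only analytic and cannot itself be fed into a GAGA argument. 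Now $V \tensor \pi^* L^{\tensor m} = \pi^*\mathcal{V}$ is the pullback to $Y$ of the algebraic $G$-vector bundle $\mathcal{V} = V \tensor L^{\tensor m}$, hence an algebraic $G$-vector bundle over the algebraic $G$-variety $Y$; denote its sheaf of algebraic sections by $\mathscr{S}$. By the algebraic analogue of the theorem of Roberts \cite{Roberts} and Hausen--Heinzner \cite{HeinznerCoherent} recalled in Section~\ref{section:coherent}, the algebraic invariant push-forward $\mathscr{A} \definiere (\pi_* \mathscr{S})^G$ is a coherent \emph{algebraic} sheaf on $Y\hq G$.

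The crucial comparison is that analytification commutes with the invariant push-forward, i.e.\ $\mathscr{A}^h \cong (\pi^h_*(V \tensor \pi^*\mathscr{L}^{\tensor m}))^G$, the coherent analytic sheaf of which $s$ is a global section. Over an affine open $U \subset Y\hq G$ trivialising $L$, the set $\pi^{-1}(U)$ is an affine $G$-variety and the claim reduces to the base-change identity
\[
\bigl(V \tensor \mathscr{H}_Y(\pi^{-1}(U))\bigr)^G \;\cong\; \bigl(V \tensor \mathscr{O}_Y(\pi^{-1}(U))\bigr)^G \tensor_{\mathscr{O}(U)} \mathscr{H}(U),
\]
that is, the holomorphic module of $V$-covariants on $\pi^{-1}(U)$ is obtained from the finitely generated algebraic module of covariants by extension of scalars to holomorphic functions. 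This follows from the comparison of algebraic and analytic Hilbert quotients in \cite{Lunaalgebraicanalytic}. Concretely it expresses $s|_U = \sum_i f_i\, \tau_i$ with finitely many algebraic generators $\tau_i$ of the covariant module and holomorphic coefficients $f_i$. I expect this comparison to be the main point to verify carefully, precisely because $X$ is not algebraic and the argument must instead be carried out on the full algebraic bundle $\pi^*\mathcal{V}$.

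Granting the comparison, the conclusion is immediate. Since $s \in H^0(Y\hq G, \mathscr{A}^h)$ and $Y\hq G$ is projective algebraic, GAGA \cite{GAGA} yields $H^0(Y\hq G, \mathscr{A}^h) = H^0(Y\hq G, \mathscr{A})$, so $s$ is an algebraic section of $\mathscr{A}$; equivalently, the coefficients $f_i$ above are regular. Consequently each local expression $s_\alpha: \pi^{-1}(U_\alpha) \to V$ of $\Phi$ is a regular map, and since $\pi$ and the transition functions $g_{\alpha\beta}$ are regular as well, the algebraic version of the basic construction in Lemma~\ref{lemma:basicconstruction} reassembles the $s_\alpha$ into a regular $G$-equivariant morphism $Y \to \mathcal{V}$. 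By the uniqueness in that construction this morphism coincides with $\Phi$, so $\Phi$ is algebraic.
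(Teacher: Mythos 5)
Your proposal follows essentially the same route as the paper: forget the vanishing along $X$, regard the section as a global section of the \emph{full} invariant push-forward, identify that coherent analytic sheaf with the analytification of a coherent \emph{algebraic} sheaf on the projective quotient $Y\hq G$, and apply Serre's GAGA to conclude that the section, and hence $\Phi$, is algebraic.

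The one substantive discrepancy lies in the justification of your ``crucial comparison'' $\mathscr{A}^h \cong (\pi^h_*(V\tensor\pi^*\mathscr{L}^{\tensor m}))^G$. This is precisely the point where the paper invokes the GAGA-principle for quotient morphisms due to Neeman \cite{NeemanAnalytic}. Your local reduction to the base-change identity for covariant modules is the correct local form of that statement, but the reference you give, \cite{Lunaalgebraicanalytic}, supplies only the structure-sheaf case ($V=\C$ the trivial module), namely that $\pi^h$ is an analytic Hilbert quotient, i.e., $(\pi^h_*\mathscr{H}_Y)^G = \mathscr{H}_{Y\hq G}$. Passing from invariant functions to $V$-covariants is a genuine additional step --- one can carry it out, for instance, by applying the structure-sheaf result to $Y\times V^*$ and extracting the part linear along $V^*$, or one can simply cite \cite{NeemanAnalytic} as the paper does --- so as written your middle paragraph rests on a reference that does not quite deliver the identity you correctly isolate as the heart of the matter. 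Apart from this, your packaging of the algebraic sheaf as $(\pi_*\mathscr{S})^G$ with $\mathscr{S}$ the algebraic sections of $\pi^*\mathcal{V}$, rather than the paper's $(\pi_*\mathscr{A})^G\tensor\mathscr{L}^{\tensor m}$, is an immaterial difference: the two are identified by the algebraic version of the projection formula (Lemma~\ref{projection}).
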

\vspace*{-3mm}
\begin{proof}
We have $\mathscr{G} = \mathscr{I}_X\cdot
\mathscr{F} \subset \mathscr{F}$, where $\mathscr{F}$ is the coherent analytic $G$-sheaf of germs of
holomorphic maps from $Y^h$ to $V$ and hence $\widetilde{\mathscr{G}} \subset (\pi^h_*\mathscr{F})^G$
due to the exactness of the $\pi^h_*(\cdot)^G$-functor, cf.\ \cite{HeinznerCoherent}. Let $L$ be the ample line bundle introduced above, and let $\mathscr{L}$ be the locally free sheaf associated to $L$. The map
$\Phi$ is induced by an element $\phi \in H^0 (Y\hq G, (\pi_*\mathscr{F})^G \tensor \mathscr{L}^{\tensor
m})$. Let $\mathscr{A}$ be the coherent algebraic $G$-sheaf of germs of algebraic maps from $Y$ to
the $G$-module $V$. Then, $\mathscr{F}$ is isomorphic  to $\mathscr{A}^h$ as a coherent analytic
$G$-sheaf over $Y^h$. We
have
\[((\pi_*\mathscr{A})^G \tensor_{\mathscr{O}_{Y/\negthickspace/ G}} \mathscr{L}^{\tensor m})^h = ((\pi_*\mathscr{A})^G)^h
\tensor_{\mathscr{H}_{Y/\negthickspace/ G}}\mathscr{L}^{\tensor m} = (\pi_*^h\mathscr{F})^G \tensor_{\mathscr{H}_{Y/\negthickspace/
G}}\mathscr{L}^{\tensor m},\] where for the second equality we used the
GAGA-principle for quotient morphisms, see \cite{NeemanAnalytic}. Since $Y\hq G$ is projective algebraic, the section $\phi \in H^0 ((Y\hq G)^h, (\pi_*^h\mathscr{F})^G
\tensor_{\mathscr{H}_{Y/\negthickspace/ G}} \mathscr{L}^{\tensor m})$ is an element of $H^0(Y\hq G, (\pi_*\mathscr{A})^G
\tensor_{\mathscr{O}_{Y/\negthickspace/ G}} L^{\tensor m})$ and hence algebraic.
\end{proof}
We are now in the position to prove to the main result of this section. It is the analogue of Chow's Theorem in our context.
\begin{thm}[Algebraicity Theorem for invariant analytic subsets]\label{invariantalgebraic}
Let $Y$ be an algebraic $G$-variety with algebraic Hilbert quotient $\pi: Y \rightarrow Y\hq G$.
Assume that $Y\hq G$ is projective algebraic. Let $X \subset Y$ be a $G$-invariant
analytic subset of $Y$ such that $\pi(X) = Y\hq G$. Then, $X$ is an algebraic subvariety of $Y$.
\end{thm}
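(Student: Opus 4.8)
The plan is to realise $X$ as an intersection of algebraic subvarieties of $Y$, each cut out as the zero locus of a separating equivariant \emph{algebraic} map into a $G$-vector bundle over $Y\hq G$. The two preceding lemmas have been engineered precisely for this purpose, so the argument reduces to packaging them together with an elementary topological observation.

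First I would fix an arbitrary point $y \in Y \setminus X$. By Lemma \ref{Bundleseparation} there exist a $G$-module $V$, an integer $m \in \N$, and a $G$-equivariant holomorphic map $\Phi_y : Y \to \mathcal{V}_y$ into the $G$-vector bundle $\mathcal{V}_y = V \tensor L^{\tensor m}$ that vanishes on $X$ while $\Phi_y(y) \neq 0$. Lemma \ref{equivariantalgebraic} then upgrades $\Phi_y$ to an algebraic morphism. Since the zero section $\mathcal{Z}_{\mathcal{V}_y}$ is a Zariski-closed subvariety of the total space of the algebraic $G$-vector bundle $\mathcal{V}_y$, the preimage $Z_y \definiere \Phi_y^{-1}(\mathcal{Z}_{\mathcal{V}_y})$ is a Zariski-closed algebraic subvariety of $Y$ satisfying $X \subset Z_y$ and $y \notin Z_y$.

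Letting $y$ range over $Y \setminus X$, the inclusions $X \subset Z_y$ give $X \subset \bigcap_y Z_y$, while $y_0 \notin Z_{y_0}$ for each $y_0 \in Y \setminus X$ shows that no point of the complement survives the intersection, so that $X = \bigcap_{y \in Y \setminus X} Z_y$. As an arbitrary intersection of Zariski-closed subsets of $Y$ is again Zariski-closed, $X$ is an algebraic subvariety of $Y$; by Noetherianity of $Y$ one may even extract a finite subfamily whose common zero locus already equals $X$, so that $X$ is cut out by finitely many such equivariant algebraic maps. The genuine content of the theorem lies wholly in the two preceding lemmas — the Serre-vanishing construction of a section separating $y$ from $X$, and the GAGA-type comparison rendering the separating map algebraic — so the only point in the present step requiring any care is the observation that the zero locus of an algebraic morphism is Zariski-closed; granting this, the passage from pointwise separation to global algebraicity of $X$ is purely formal.
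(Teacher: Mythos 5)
Your proposal is correct and follows essentially the same route as the paper's own proof: the paper defines $B$ as the common zero locus of all $G$-equivariant algebraic maps $\Phi: Y \to V \tensor L^{\tensor m}$ vanishing on $X$ and shows $X = B$ by invoking Lemmata \ref{Bundleseparation} and \ref{equivariantalgebraic} to separate any $y \in Y \setminus X$ from $X$, which is exactly your pointwise separation followed by intersecting the Zariski-closed zero loci. The only cosmetic difference is that you index the separating maps by points of the complement and add the (correct but unneeded) remark about extracting a finite subfamily by Noetherianity.
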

\vspace*{-3mm}
\begin{proof}Set $\mathcal{A}\definiere \{\Phi: Y \rightarrow V \tensor L^{\tensor m} \mid \Phi
\text{ a } G\text{-equivariant algebraic map}, V \text{ a } G\text{-module}, m\in \N\}$.
It suffices to show that $X = \{y \in Y \mid \Phi(y) = 0 \text{ for every }\Phi \in \mathcal{A} \text{ vanishing on
}X\} \rdefiniere B$, since then $X$ is the intersection of a family of algebraic subvarieties of $Y$,
hence algebraic.
Clearly, we have $X \subset B$. So, let $y \in Y \setminus X$. Then, using Lemmata
\ref{Bundleseparation} and \ref{equivariantalgebraic}, we see that there exists a map $\Phi \in \mathcal{A}$
such that $\Phi$ vanishes on $X$ and $\Phi(y) \neq 0$. Hence, $y \notin B$ and the claim is shown.
\end{proof}
Lemma \ref{Bundleseparation} and Theorem \ref{invariantalgebraic} are no longer true if the quotient is not assumed to be projective:
\begin{ex}
Consider the action of $\C^*$ on $\C^3 = \C \times \C^2$ by multiplication in the
second factor. The analytic Hilbert quotient is given by $\pi: \C \times \C^2 \to \C$, $(z,w) \mapsto z$.
The holomorphic function $F: \C^3 \to \C$, $(u_1, u_2, u_3) \mapsto u_3 - u_2 \cdot
e^{u_1}$ is equivariant with respect to the $\C^*$-action on $\C^3$ and the standard action of
$\C^*$ on $\C$ by multiplication. Its zero set $X = \{F = 0\}$ is a $\C^*$-invariant, non-algebraic,
analytic subset of $\C^3$ with $\pi(X) = \C^3 \hq \C^* = \C$.
\end{ex}

\subsection{The Algebraicity Theorem}\label{algebraicitytheorem}
In this section we prove Theorem \ref{2}. We have separated the statements into two propositions.

\begin{prop}\label{existsalgstructure}
Let $X$ be a holomorphic $G$-space such that the analytic Hilbert quotient $\pi: X \rightarrow X\hq
G$ exists and such that $X\hq G$ is projective algebraic. Then, there exists a quasi-projective algebraic $G$-variety $X_0$
with algebraic Hilbert quotient $\pi_0:X_0 \rightarrow X_0\hq G \cong X\hq G$ such that $X$ is $G$-equivariantly biholomorphic to $X^h_0$.
\end{prop}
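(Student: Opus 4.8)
The plan is to realise $X$ as an algebraic subvariety of an algebraic $G$-vector bundle over $X\hq G$ by combining the Embedding Theorem with the Algebraicity Theorem for invariant analytic subsets. First I would invoke Theorem~\ref{embedding} to obtain an equivariant proper holomorphic embedding $\Phi: X \to \mathcal{V}$ into an algebraic $G$-vector bundle $\mathcal{V}$ over $X\hq G$. Since $X\hq G$ is projective algebraic, the total space $\mathcal{V}$ is a quasi-projective algebraic $G$-variety, and by the discussion following Lemma~\ref{lemma:basicconstruction} the algebraic Hilbert quotient $\pi_{\mathcal{V}}: \mathcal{V} \to \mathcal{V}\hq G$ exists. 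As $\Phi$ is a proper embedding, its image $\Phi(X)$ is a $G$-invariant closed analytic subset of $\mathcal{V}^h$, and $\Phi$ is an equivariant biholomorphism onto it.

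The key difficulty is that $\mathcal{V}\hq G$ is only quasi-projective --- it is a fibre bundle over $X\hq G$ with affine fibre $V\hq G$ --- so Theorem~\ref{invariantalgebraic} cannot be applied directly with $Y = \mathcal{V}$; moreover $\pi_{\mathcal{V}}(\Phi(X))$ is not all of $\mathcal{V}\hq G$. To remedy this I would pass to the induced map $\overline{\Phi}: X\hq G \to \mathcal{V}\hq G$ on quotients, which the proof of Theorem~\ref{embedding} shows to be a proper holomorphic embedding. Because $X\hq G$ is projective, hence compact, the image $Q_0 \definiere \overline{\Phi}(X\hq G)$ is a compact analytic subset of the quasi-projective variety $\mathcal{V}\hq G$ and is therefore, by Chow's Theorem, a projective algebraic subvariety; and $\overline{\Phi}: X\hq G \to Q_0$, being a holomorphic map between projective varieties, is an isomorphism of algebraic varieties by GAGA. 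This passage from the quasi-projective quotient to the projective subvariety $Q_0$ is the main obstacle.

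With $Q_0$ in hand I would set $Y \definiere \pi_{\mathcal{V}}^{-1}(Q_0)$. As the saturated preimage of a closed subvariety under the affine quotient map, $Y$ is a $G$-invariant algebraic subvariety of $\mathcal{V}$ whose algebraic Hilbert quotient is the restriction $\pi_{\mathcal{V}}|_Y: Y \to Q_0$; in particular $Y\hq G \cong Q_0$ is projective algebraic. Since $\pi_{\mathcal{V}}(\Phi(X)) = \overline{\Phi}(\pi(X)) = \overline{\Phi}(X\hq G) = Q_0$, the image $\Phi(X)$ lies in $Y$, is a $G$-invariant analytic subset, and satisfies $\pi_{\mathcal{V}}(\Phi(X)) = Q_0 = Y\hq G$. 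Theorem~\ref{invariantalgebraic} now applies and shows that $\Phi(X)$ is an algebraic subvariety of $Y$, and hence a closed algebraic subvariety of $\mathcal{V}$.

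Finally I would set $X_0 \definiere \Phi(X)$ with its induced structure of an algebraic $G$-variety. Being a closed subvariety of the quasi-projective variety $\mathcal{V}$, it is quasi-projective, and the restriction $\pi_0 \definiere \pi_{\mathcal{V}}|_{X_0}: X_0 \to Q_0$ is its algebraic Hilbert quotient, with $X_0\hq G = Q_0 \cong X\hq G$. The equivariant biholomorphism $\Phi: X \to \Phi(X) = X_0^h$ then provides the required identification of $X$ with $X_0^h$, completing the proof.
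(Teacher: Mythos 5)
Your proposal is correct and follows essentially the same route as the paper: embed $X$ properly into an algebraic $G$-vector bundle $\mathcal{V}$ via Theorem~\ref{embedding}, pass to the induced map $\overline{\Phi}$ on quotients to identify the compact algebraic image $\overline{\Phi}(X\hq G) \subset \mathcal{V}\hq G$, take $Y = \pi_{\mathcal{V}}^{-1}(\overline{\Phi}(X\hq G))$ so that its algebraic Hilbert quotient is projective, and then apply Theorem~\ref{invariantalgebraic} to conclude that $\Phi(X)$ is algebraic. The only difference is that you spell out the Chow/GAGA justification for the algebraicity of $\overline{\Phi}$ and its image, which the paper asserts more briefly.
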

\vspace*{-3mm}
\begin{proof}
By the Embedding Theorem, Theorem \ref{embedding}, there exists a proper holomorphic embedding
$\Phi: X \hookrightarrow \mathcal{V}$ into an algebraic $G$-vector bundle over $X\hq G$.
Furthermore, $\mathcal{V}$ admits an algebraic Hilbert quotient $\pi_{\mathcal{V}}: \mathcal{V}
\rightarrow \mathcal{V}\hq G$. Since $X\hq G$ is projective, the induced map $\overline{\Phi}: X\hq G \to \mathcal{V}\hq G$ is algebraic and its image is a compact algebraic subvariety of $\mathcal{V}\hq G$. Set $Y \definiere \pi_{\mathcal{V}}^{-1}
(\overline{\Phi}(X\hq G))\subset \mathcal{V}$. The restriction $\pi_Y\definiere\pi_{\mathcal{V}}|_Y: Y
\rightarrow \pi_{\mathcal{V}}(Y) \cong X\hq G$ is an algebraic Hilbert quotient, and $\Phi:X
\rightarrow Y$ is a proper holomorphic embedding.

Therefore, we can assume that $X$ is a $G$-invariant analytic subset of a
quasi-projective $G$-variety $Y$ that admits an algebraic Hilbert quotient $\pi_Y: Y\rightarrow Y\hq
G$ such that $\pi(X) = Y\hq G$ is projective algebraic. We conclude that $\pi= \pi_Y^h|_X$. The Algebraicity Theorem for invariant
analytic sets, Theorem~\ref{invariantalgebraic}, implies that $X$ is an algebraic subset of $Y$,
hence a quasi-projective algebraic variety.
\end{proof}
By Proposition~\ref{existsalgstructure} we can endow $X$ with the structure of an algebraic $G$-variety with algebraic Hilbert quotient $X\hq G$. Next, we investigate uniqueness properties of this algebraic structure.
\begin{prop}\label{twoquotients}
Let $X$ be a holomorphic $G$-space such that the analytic Hilbert quotient $\pi: X \rightarrow X\hq
G$ exists and such that $X\hq G$ is a projective algebraic variety. Assume that there are two
algebraic $G$-varieties $X_1$ and $X_2$ with algebraic Hilbert quotients $\pi_j: X_j \rightarrow X_j
\hq G$, $j =1,2$ such that $X$ is $G$-equivariantly biholomorphic to both $X_1^h$ and $X_2^h$. Then, $X_1$ and $X_2$ are isomorphic as algebraic $G$-varieties.
\end{prop}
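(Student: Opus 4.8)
The plan is to reduce the statement to showing that the given equivariant biholomorphism is bi-algebraic, and to establish this by pushing both algebraic structures into one fixed algebraic $G$-vector bundle over the quotient, where they become two subvarieties that can be compared directly. Composing the two biholomorphisms $X_1^h \cong X \cong X_2^h$, I obtain a $G$-equivariant biholomorphism $\Psi \colon X_1^h \to X_2^h$. It descends to a biholomorphism $\overline\Psi \colon X_1\hq G \to X_2\hq G$ of the quotients, and since both quotients are projective algebraic, $\overline\Psi$ is an isomorphism of algebraic varieties by GAGA \cite{GAGA}. I identify the two quotients along $\overline\Psi$ and write $Q$ for the resulting projective quotient, so that henceforth $\Psi$ lies over $\mathrm{id}_Q$. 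It then suffices to prove that $\Psi$ is an isomorphism of algebraic $G$-varieties.

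The key point I would isolate first is the following algebraicity principle: if $Z$ is an algebraic $G$-variety whose algebraic Hilbert quotient $Q$ is projective, and $\mathcal V$ is an algebraic $G$-vector bundle over $Q$, then every $G$-equivariant holomorphic map $Z^h \to \mathcal V$ lying over $\mathrm{id}_Q$ is algebraic. Indeed, by the (converse direction of the) correspondence in Lemma~\ref{lemma:basicconstruction} such a map is given by a global holomorphic section of a sheaf of the shape $\pi_*(V \tensor \pi^*\mathscr{L}^{\tensor m})^G$, and the argument of Lemma~\ref{equivariantalgebraic}, which uses the GAGA principle for quotient morphisms \cite{NeemanAnalytic} together with the projectivity of $Q$, shows that every such holomorphic section is in fact algebraic. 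I also record the elementary comparison fact that an algebraic morphism of $\C$-varieties inducing a biholomorphism of the associated complex spaces is an isomorphism of algebraic varieties: it is \'etale and bijective, hence an open immersion, and being bijective it is an isomorphism.

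With these tools in hand the argument is short. By the Embedding Theorem, Theorem~\ref{embedding}, I choose a proper $G$-equivariant holomorphic embedding $\iota \colon X_2^h \hookrightarrow \mathcal V$ into an algebraic $G$-vector bundle $\mathcal V$ over $Q$, lying over $\mathrm{id}_Q$. The algebraicity principle applies to $\iota$, so $\iota$ is algebraic; its image $X_0 \definiere \iota(X_2)$ is a locally closed algebraic subvariety of $\mathcal V$, and by the comparison fact $\iota \colon X_2 \to X_0$ is an isomorphism of algebraic $G$-varieties. Applying the same principle to the composite $\iota \circ \Psi \colon X_1^h \to \mathcal V$, which is again $G$-equivariant over $\mathrm{id}_Q$ and which is a proper holomorphic embedding with the same image $X_0$, I conclude that $\iota \circ \Psi$ is algebraic and induces an isomorphism $X_1 \to X_0$ of algebraic $G$-varieties. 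Therefore $\Psi = \iota^{-1} \circ (\iota \circ \Psi) \colon X_1 \to X_2$ is a composite of isomorphisms of algebraic $G$-varieties, which proves the claim.

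The main obstacle is precisely the algebraicity principle of the second paragraph: the whole proof hinges on converting holomorphic equivariant data over $Q$ into algebraic data, and this is exactly where the projectivity of $X\hq G$ is indispensable, through Serre vanishing and GAGA for the quotient morphism as in Lemma~\ref{equivariantalgebraic} (this is the same mechanism already exploited in Proposition~\ref{existsalgstructure}). The remaining points — that $\iota(X_2)$ is a locally closed subvariety and that a biholomorphic algebraic morphism is an algebraic isomorphism — are comparatively routine, but they have to be invoked with some care, since the ambient bundle $\mathcal V$ and the varieties $X_1, X_2$ need not be complete.
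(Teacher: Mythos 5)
Your proof is correct, but it takes a genuinely different route from the paper's. The paper does not re-invoke the Embedding Theorem at this point: it forms the graph $\Gamma \subset X_1 \times X_2$ of the induced biholomorphism $\phi\colon X_1^h \to X_2^h$, obtains an algebraic Hilbert quotient $\Pi$ of $X_1\times X_2$ from Corollary 5 of \cite{BBExistenceOfQuotients}, observes that $Y \definiere \Pi^{-1}(\Pi(\Gamma))$ is an algebraic $G$-variety with projective algebraic Hilbert quotient in which $\Gamma$ sits as a $G$-invariant analytic subset mapping onto the quotient, and then applies the equivariant Chow theorem, Theorem~\ref{invariantalgebraic}, to conclude that $\Gamma$ is algebraic, hence that $\phi$ is regular. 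You instead identify the two quotients with a single projective $Q$, embed $X_2^h$ into an algebraic $G$-vector bundle $\mathcal{V}$ over $Q$ via Theorem~\ref{embedding}, and compare the two algebraic structures inside $\mathcal{V}$ by showing that $\iota$ and $\iota\circ\Psi$ are algebraic. The paper's route buys economy and symmetry: Theorem~\ref{invariantalgebraic} is already in hand, nothing needs to be embedded, and the graph treats $X_1$ and $X_2$ on an equal footing. Your route buys independence from the product-quotient existence result and from Theorem~\ref{invariantalgebraic}, at the price of three points that you correctly flag but should state with care: (i) your ``algebraicity principle'' is not the statement of Lemma~\ref{equivariantalgebraic} (which concerns only the specific maps of Lemma~\ref{Bundleseparation}) but its proof, run through the converse direction of Lemma~\ref{lemma:basicconstruction}, and it must be applied componentwise, since the bundle produced by Theorem~\ref{embedding} is a direct sum $\bigoplus_i V_i \otimes L^{\otimes m_i}$ rather than a single $V \otimes L^{\otimes m}$; (ii) $X_0$ is in fact Zariski \emph{closed}, because it is constructible (Chevalley) and closed in the analytic topology (properness of $\iota$), and for constructible sets the two closures agree; (iii) the comparison fact --- an algebraic morphism whose analytification is biholomorphic is an algebraic isomorphism, via completed local rings, \'etaleness, and injectivity --- is genuinely needed, and it is worth noting that the paper uses the same fact implicitly in its final line, where algebraicity of the graph is converted into regularity of $\phi$ through the biholomorphic algebraic projection $\Gamma \to X_1$. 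Both arguments ultimately rest on the same mechanism, Serre vanishing and GAGA over the projective quotient.
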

\vspace*{-3mm}
\begin{proof}
 Let $\phi : (X_1)^h \rightarrow (X_2)^h$ be the holomorphic map given by the two isomorphisms $\phi_j: X \to X_j$ for $j =1,2$ and let $\Gamma \subset X_1 \times X_2$ be its graph. The set $\Gamma$ is $G$-invariant, analytic, and $G$-equivariantly biholomorphic to $X$ via $\Phi: X \to \Gamma$, $\Phi = \phi_1 \times \phi_2$. We will show that $\Gamma$ is an algebraic subvariety of $X_1
\times X_2$.

The $G$-invariant
algebraic map $\pi_1 \times \pi_2 : X_1 \times X_2 \rightarrow X_1\hq G \times X_2\hq G$ is affine. Applying \cite{BBExistenceOfQuotients}, Corollary 5, it follows that there exists an algebraic Hilbert quotient $\Pi: X_1 \times X_2 \rightarrow (X_1 \times X_2)\hq G \rdefiniere Q$ for the $G$-action on $X_1 \times X_2$. Consider the following commutative diagram
\[\begin{xymatrix}{
X \ar[r]^<<<<<<{\Phi} \ar[d]_{\pi}&  X_1 \times X_2 \ar[d]^{\Pi}\\
X\hq G \ar[r]^{\widebar\Phi} &   Q.
}
  \end{xymatrix}
\]
Since $X\hq G$ is projective, it follows that $\Pi(\Gamma)= \widebar \Phi (X\hq G)$ is a
compact algebraic subvariety of $Q$ isomorphic to $X\hq G$. Its preimage $Y \definiere \Pi^{-1} (\Pi
(\Gamma))$ is an algebraic $G$-subvariety of $X_1 \times X_2$. The map
$\pi_Y\definiere \Pi|_Y: Y \rightarrow \Pi(Y)=Y\hq G$ is an algebraic Hilbert quotient, $Y\hq G
\cong X\hq G$ is projective, and $\Gamma \subset Y$ is a $G$-invariant analytic subset such that
$\pi_Y(\Gamma) = Y\hq G$. The Algebraicity Theorem for invariant analytic sets, Theorem
\ref{invariantalgebraic}, implies that $\Gamma \subset Y \subset X_1 \times X_2$ is algebraic. We conclude that $\phi$ is a regular map.
\end{proof}
The proof of the Algebraicity Theorem, Theorem~\ref{2}, now follows by applying Propositions \ref{existsalgstructure} and \ref{twoquotients}.

\begin{rem}
The claim of Theorem \ref{2} is no longer true, if the quotient is not
assumed to be projective, as illustrated by the following example.
\end{rem}
\begin{ex}
Let $A \subset \C^2$ be any non-algebraic, analytic subset of $\C^2$. If $\varphi: A \to \C^2$ is the inclusion of $A$ into $\C^2$, we consider the space $X \definiere \C^2 \cup_\varphi  \C^2 = (\C^2 \sqcup \C^2)/\negthickspace\sim$, where $\sqcup$ denotes disjoint union, and
where $x \sim y$, if and only if $y = \varphi(x)$. Let $Y \definiere \C^2 \sqcup \C^2$
and let $p: Y \to X$ be the quotient map. For an open subset $U$ in $X$ we define
\[\mathscr{H}_{X}(U) \definiere \{f \in \mathscr{H}_Y(p^{-1}(U)) \mid f
\text{ constant on $\sim$-equivalence classes}\}.\]
By a Theorem of H.\ Cartan \cite{CartanQuotients} the ringed space $(X,
\mathscr{H}_{X})$ is a complex space, and the projection map $p: Y \to X$ is holomorphic. Consider the holomorphic map $\sigma: Y \to Y$ which sends a point $x$ in one of the connected components $\C^2$ of $Y$ to the corresponding point in
the other connected component of $Y = \C^2 \sqcup \C^2$. The automorphism $\sigma$
generates a holomorphic action of $\Z_2 = \Z/2\Z$ on $Y$. This action descends to a holomorphic $\Z_2$-action on $X$ making $p: Y \to X$ equivariant. Consider the holomorphic map $\psi: Y \to \C^2$ which maps a point $x \in Y$ to the corresponding
point in $\C^2$. The map $\psi$ is constant on equivalence classes and hence descends to a
holomorphic map $\pi: X \to \C^2$. This is an analytic Hilbert quotient for the $\Z_2$-action on
$X$.

We claim that there exists no algebraic structure on $X$ making $X$ into an algebraic
$\Z_2$-variety in such a way that the map $\pi: X \to \C^2$ becomes the algebraic Hilbert quotient for
the action of $\Z_2$ on $X$. Indeed, suppose that this was possible. Then, $X$ would be an affine
variety and the map $\pi: X \to \C^2$ would be finite. Therefore, the branch locus $R_\pi
\subset \C^2$ of $\pi$ would be an algebraic subvariety of $\C^2$. However,
set-theoretically, we have $R_\pi = A$, a contradiction.
\end{ex}

\section{Analytic Hilbert quotients as algebraic Hilbert quotients}\label{openproblems}
We conclude our study by discussing connections between the results on momentum map quotients  obtained in the first part of this note (Theorems~\ref{1} and \ref{unstablepointsarealgebraic}) and the Algebraicity Theorem, Theorem~\ref{2}.

We recall the setup of Theorems~\ref{1} and \ref{unstablepointsarealgebraic}. Let $G$ be the complexification of the compact Lie group $K$. Let $X$ be a $G$-irreducible algebraic Hamiltonian $G$-variety with momentum map $\mu: X \to
\mathfrak{k}^*$. Assume that $X$ has only $1$-rational singularities and that $\mu^{-1}(0)$ is
compact. Then, Theorem \ref{1} and Theorem \ref{unstablepointsarealgebraic} imply that
$X(\mu)\hq G$ is projective algebraic and that $X(\mu)$ is an algebraically Zariski-open subset in
$X$, hence itself an algebraic $G$-variety. It now follows from
Theorem \ref{2} that there exists a
quasi-projective algebraic $G$-variety $Z$ and a $G$-equivariant biholomorphic map $\psi: X(\mu)^h
\to Z^h$. It is a natural question whether $\psi$ preserves the algebraic structure on $X(\mu)^h$.

The map $\psi$ is induced by an isomorphism of algebraic varieties if and only if $\pi: X(\mu) \to X(\mu)\hq G$ is an algebraic Hilbert quotient. The following result states that this is true under an additional regularity assumption.

\begin{prop}\label{momentumaffine}
Let $K$ be a compact Lie group and let $G=K^\C$ be its
complexification. Let $X$ be a $G$-irreducible algebraic
Hamiltonian $G$-variety with momentum map $\mu: X \to \mathfrak{k}^*$. Assume that $X$ has only
$\mathit{1}$-rational singularities and that $\mu^{-1}(0)$ is compact. Assume in addition that all stabiliser
groups of points in $X(\mu)$ are finite. Then, $\pi: X(\mu) \to X(\mu)\hq G$ is an algebraic Hilbert quotient. In particular, the map $\psi$ is biregular.
\end{prop}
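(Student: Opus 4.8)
The plan is to exploit the finiteness hypothesis to turn $\pi$ into a geometric quotient with affine fibres, and then to show by a slice argument that $\pi$ is an affine morphism, from which the algebraic Hilbert quotient property follows. First I would record that the finiteness of all stabilisers forces every $G$-orbit in $X(\mu)$ to be closed. Indeed, for each $x \in X(\mu)$ we have $\dim G_x = 0$, so every orbit has dimension $\dim G$; were some orbit $G\acts x$ not closed, its boundary $\overline{G\acts x}\setminus G\acts x$ would be a non-empty $G$-invariant subset of $X(\mu)$ consisting of orbits of strictly smaller dimension, contradicting the fact that all orbits in $X(\mu)$ have dimension $\dim G$. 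Consequently $X^s = X(\mu) = G\acts\mu^{-1}(0)$ and $\pi\colon X(\mu)\to X(\mu)\hq G$ is a geometric quotient whose fibres are the orbits $G\acts x \cong G/G_x$, each of which is an affine variety.

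Recall that $X(\mu)$ carries the structure of a quasi-projective algebraic $G$-variety as a Zariski-open subset of $X$ by Theorem~\ref{unstablepointsarealgebraic}, and that $X(\mu)\hq G$ is projective algebraic by Theorem~\ref{1}; moreover $X$, and hence $X(\mu)$, is normal. To prove the proposition it suffices to show that, with respect to this algebraic structure, $\pi$ is a regular affine morphism with $(\pi_*\mathscr{O}_{X(\mu)})^G = \mathscr{O}_{X(\mu)\hq G}$. Indeed, once the algebraic Hilbert quotient of $X(\mu)$ is seen to exist, its associated holomorphic map is the analytic Hilbert quotient by \cite{Lunaalgebraicanalytic}, which by uniqueness must coincide with $\pi$; the final assertion that $\psi$ is biregular then follows from the criterion recorded immediately before the statement.

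To establish affineness I would realise $\pi$ locally, in the étale topology on the base, through slices. For a closed orbit $G\acts x$ the stabiliser $G_x$ is finite, hence linearly reductive, so a slice argument as in Theorem~\ref{algebraicslice} (combined with Sumihiro's theorem \cite{completion}, after passing to the identity component as in the proof of Theorem~\ref{1}) produces a $G_x$-stable affine slice $S \ni x$ together with a $G$-saturated étale neighbourhood $G\times_{G_x}S \to X(\mu)$ whose quotient is étale over $S\hq G_x = S/G_x$. The decisive consequence of the finiteness of $G_x$ is that $G\times_{G_x}S = (G\times S)/G_x$ is itself affine and that the map $G\times_{G_x}S \to S/G_x$ is an affine morphism, its fibres being the affine orbits $G/G_y$, $y\in S$. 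Since both affineness and the formation of the invariant direct image sheaf are étale-local on the base (good quotients by a reductive group commute with flat base change), and since these slices cover $X(\mu)$ while their quotients cover the geometric quotient, I would descend to conclude that the geometric quotient exists as an algebraic variety, that $\pi$ is affine, and that $(\pi_*\mathscr{O}_{X(\mu)})^G$ is its structure sheaf. Comparing analytifications as above then identifies this quotient with $X(\mu)\hq G$.

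The main obstacle is precisely the passage between the given analytic quotient $X(\mu)\hq G$ and the algebraic quotient constructed locally by the slice theorem. A priori $\pi$ is only holomorphic, so preimages of Zariski-open subsets of $X(\mu)\hq G$ need not be Zariski-open in $X(\mu)$, and the models furnished by the holomorphic Slice Theorem are only \emph{biholomorphic} to saturated subsets of affine $G$-varieties, which by itself does not yield algebraicity of $\pi$; this is exactly the difficulty that blocks the na\"ive attempt to run the argument of Proposition~\ref{twoquotients}, since forming the algebraic Hilbert quotient of a product via \cite{BBExistenceOfQuotients} would already require the affineness of $\pi$. It is the finiteness of the stabilisers that removes this obstruction: it makes every orbit closed and renders the slice models $G\times_{G_x}S$ genuinely affine over their quotients, so that the quotient can be built and $\pi$ shown to be affine entirely within the algebraic category, after which the comparison with $X(\mu)\hq G$ is automatic.
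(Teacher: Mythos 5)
Your opening step --- finite stabilisers force every $G$-orbit in $X(\mu)$ to be closed, so that $\pi$ is a geometric quotient --- is exactly how the paper begins, and your justification of it is correct. The rest of the argument, however, has a genuine gap at its centre: the ``descent'' that is supposed to produce the algebraic quotient is circular as stated. Descent of affineness and of invariant direct images is a statement about morphisms to a base that already exists in the algebraic category; here the base, and the algebraicity of the map to it, is precisely what has to be constructed, since a priori $\pi$ is only holomorphic. Moreover, the slice models available from Theorem~\ref{algebraicslice} are only $G$-equivariantly \emph{biholomorphic} to saturated subsets of affine $G$-varieties, where ``saturated'' refers to sets open in the classical topology, not Zariski-open ones; they therefore do not form an \'etale atlas in the algebraic category, and one cannot even formulate your requirement that the induced maps on local quotients be \'etale ``over the quotient'' before knowing that the quotient exists algebraically. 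Gluing the local quotients $S_i/G_{x_i}$ into an algebraic variety (or algebraic space) from such data, and then identifying it with $X(\mu)\hq G$, is essentially the content of Koll\'ar's theorem on quotients by proper group actions \cite{KollarQuotients} (equivalently, of Keel--Mori); this is the hard step of the whole proposition, and your proposal treats it as routine bookkeeping. The concluding claim that the comparison with $X(\mu)\hq G$ is ``automatic'' also hides a GAGA argument for an object that, in your construction, is a priori only an algebraic space.

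The paper proceeds differently and avoids the circularity. First it makes $\pi$ itself algebraic: since the quotient is geometric, $\pi$ agrees on a dense Zariski-open set with Rosenlicht's quotient map \cite{Rosenlicht2}, hence (cf.\ the proof of Proposition~\ref{prop1}) $\pi$ is a rational map to the projective variety $X(\mu)\hq G$; being in addition holomorphic everywhere, it is regular. Second, it observes that the existence of local slices together with the existence of a geometric quotient implies that $G$ acts \emph{properly} on $X(\mu)$ --- a hypothesis your proposal never verifies --- and only then invokes \cite{KollarQuotients} to conclude that the regular map $\pi$ is an algebraic Hilbert quotient. To salvage your outline you would need (i) an argument for the regularity of $\pi$, either via Rosenlicht as above or via a careful GAGA comparison for whatever quotient you construct, (ii) the properness of the action, and (iii) an explicit appeal to Koll\'ar or Keel--Mori in place of the informal descent step.
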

\vspace*{-3mm}
\begin{proof}
Since the stabiliser groups of elements in $X(\mu)$ are finite, all $G$-orbits in
$X(\mu)$ are closed in $X(\mu)$, and the quotient $\pi: X(\mu) \to X(\mu)\hq G$ is geometric. It follows (cf.\ the proof of Proposition~\ref{prop1}) that the quotient map $\pi: X(\mu) \to X(\mu)\hq G$ is rational. Since $\pi$ is a priori holomorphic, we conclude that it is regular. The existence of local slices for the
action of $G$ on $X(\mu)$ together with the existence of a geometric quotient for the $G$-action on $X(\mu)$ implies that $G$ acts properly on $X(\mu)$. It then follows from \cite{KollarQuotients} that $\pi$ is an algebraic Hilbert quotient.
\end{proof}
\begin{rems}
a) If $X$ is smooth, the finiteness of the isotropy groups required in Proposition \ref{momentumaffine} is equivalent to $0$ being a regular value of the momentum map $\mu$.

b) If, in addition to the assumptions made in Proposition~\ref{momentumaffine}, $X$ is $\Q$-factorial, there exists a $G$-linearised line bundle $L$ on $X$ such that $X(\mu)$ coincides with the set $X(L)$ of semistable points with respect to $L$, cf.\ \cite{MumfordGIT}, Chap.\ 1, \S 4, Conv 1.13. Hence, in this setup the analytic theory of momentum map quotients coincides with Mumford's algebraic Geometric Invariant Theory.

c) If $X$ is a smooth projective algebraic Hamiltonian $G$-variety, the assertion of Proposition~\ref{momentumaffine} is true without the assumption on stabiliser groups, see \cite{MomentumProjectivity}. It is however an open question if the quotient map $\pi: X(\mu) \to X(\mu)\hq G$ of a general algebraic Hamiltonian $G$-variety is an algebraic Hilbert quotient.
\end{rems}
\def\cprime{$'$} \def\polhk#1{\setbox0=\hbox{#1}{\ooalign{\hidewidth
  \lower1.5ex\hbox{`}\hidewidth\crcr\unhbox0}}}
  \def\polhk#1{\setbox0=\hbox{#1}{\ooalign{\hidewidth
  \lower1.5ex\hbox{`}\hidewidth\crcr\unhbox0}}}
\providecommand{\bysame}{\leavevmode\hbox to3em{\hrulefill}\thinspace}
\providecommand{\MR}{\relax\ifhmode\unskip\space\fi MR }
\providecommand{\MRhref}[2]{%
  \href{http://www.ams.org/mathscinet-getitem?mr=#1}{#2}
}
\providecommand{\href}[2]{#2}

\vfill
Daniel Greb\\
Albert-Ludwigs-Universit\"at\\
Mathematisches Institut\\
Abteilung f\"ur Reine Mathematik\\
79104 Freiburg im Breisgau\\
Germany

\verb"daniel.greb@math.uni-freiburg.de"
\end{document}